\documentclass[reqno, 11pt]{amsart}

\usepackage[algoruled,lined,noresetcount,norelsize]{algorithm2e}

\usepackage{graphicx}
\usepackage{amsmath,amssymb,amsthm}
\usepackage{mathrsfs}
\usepackage{mathabx}\changenotsign
\usepackage{dsfont}
\usepackage{xcolor}
\usepackage[backref]{hyperref}
\hypersetup{
    colorlinks,
    linkcolor={red!60!black},
    citecolor={green!60!black},
    urlcolor={blue!60!black}
}
\usepackage[T1]{fontenc}
\usepackage{lmodern}
\usepackage[babel]{microtype}
\usepackage[english]{babel}

\linespread{1.3}
\usepackage{geometry}
\geometry{left=22mm,right=22mm, top=25mm, bottom=25mm}

\numberwithin{equation}{section}
\numberwithin{figure}{section}

\usepackage{enumitem}

\theoremstyle{plain}
\newtheorem{thm}{Theorem}[section]

\newtheorem{obs}[thm]{Observation}
\newtheorem{prob}[thm]{Problem}
\newtheorem{clm}[thm]{Claim}

\newtheorem{lemma}[thm]{Lemma}

\theoremstyle{definition}

\newtheorem{dfn}[thm]{Definition}

\newcommand\eps{\varepsilon}
\newcommand{\Exp}{\mathbb{E}}
\newcommand{\Prob}{\mathbb{P}}
\newcommand{\cB}{\mathcal{B}}
\newcommand{\cT}{\mathcal{T}}
\newcommand{\cS}{\mathcal{S}}
\newcommand{\cR}{\mathcal{R}}
\newcommand{\goodx}{\operatorname{GOOD}_x}
\newcommand{\dang}{\operatorname{dang}}

\newcommand{\weight}{\text{weight}}

\renewcommand{\subset}{\subseteq}
\renewcommand{\setminus}{\smallsetminus}

\parindent 0mm


\begin{document}

\title[Walker-Breaker games on $G_{n,p}$]{Walker-Breaker games on $G_{n,p}$}

\author[Dennis Clemens]{Dennis Clemens}
\author[Pranshu Gupta]{Pranshu Gupta}
\author[Yannick Mogge]{Yannick Mogge}

\thanks{This research was done when the second author was affiliated with the Hamburg University of Technology. The research of the first and third author is supported by Deutsche Forschungsgemeinschaft (Project CL 903/1-1). }

\address{Hamburg University of Technology, Institute of Mathematics, Hamburg, Germany.}
\email{dennis.clemens@tuhh.de, yannick.mogge@tuhh.de}
\address{University of Passau, Faculty of Computer Science and Mathematics, Passau, Germany.}
\email{pranshu.gupta@uni-passau.de}

\maketitle

\begin{abstract}
    The Maker-Breaker connectivity game and Hamilton cycle game 
    belong to the best studied games in positional games theory,
    including results on biased games, games on random graphs and
    fast winning strategies. Recently, the Connector-Breaker game variant, in which Connector has to claim edges such that her graph stays connected throughout the game, 
    as well as the Walker-Breaker game variant, 
    in which Walker has to claim her edges according to a walk, have
received growing attention.
	
	For instance, London and Pluh\'ar studied the threshold bias for the Connector-Breaker connectivity game on a complete graph $K_n$, and showed that there is a big difference between the cases when Maker's bias equals $1$ or $2$. Moreover, a recent result by the first and third author as well as Kirsch shows that the threshold probability $p$ for the $(2:2)$ Connector-Breaker connectivity game on a random graph $G\sim G_{n,p}$ is of order $n^{-2/3+o(1)}$. We extent this result further to Walker-Breaker games and prove that this probability is also enough for Walker to create a  Hamilton cycle.
\end{abstract}

\section{Introduction}

A positional game is a perfect information game for two players, which is played on a \emph{board} $X$, equipped with a family $\mathcal{F} \subset 2^X$ of subsets of the board, which represent the \emph{winning sets}. In the \emph{unbiased} case of such a game, during each round both players claim one previously unclaimed element of the board according to some predefined rules. For example, in a $(1:1)$ Maker-Breaker game both players alternately claim one element of $X$. Maker wins the game if, by the time every element of the board is occupied by either player, she has claimed all elements of a winning set; otherwise Breaker wins the game.

If we allow Maker and Breaker to claim up to $m$ and respectively $b$ elements of the board in every round, we call the game \emph{biased}. Note that Maker-Breaker games are \emph{bias monotone}, since claiming more elements is never a disadvantage for any player, see e.g.~\cite{hefetz2014positional}. This also means that for any fixed $m$ and any non-trivial family $\mathcal{F}$, i.e.~every winning set has more than $m$ elements, we can find an integer $b_0=b_0(m,\mathcal{F})$, called {\em threshold bias}, such that Breaker wins the $(m:b)$ Maker-Breaker game if and only if $b \geq b_0$ holds.

\medskip

Most research on positional games in the last decades concentrates on the case where the board $X$ is the edge set of a given graph. Typical choices are the complete graph on $n$ vertices, which we denote with $K_n$, or a random graph $G$ sampled according to the \emph{binomial random graph model} $G_{n,p}$, for which we fix $n$ vertices and every possible edge is included in the graph independently with probability $p$; we abbreviate this by $G \sim G_{n,p}$. 
Note that for this model and for monotone increasing graph properties $\mathcal{F}$, e.g.~being connected or containing specific subgraphs, we always have a \emph{threshold probability} $p^*$ \cite{bollobas1987threshold} such that
$$ \mathbb{P} (G \sim G_{n,p} \text{ satisfies } \mathcal{F}) \rightarrow \begin{cases} 0 & \text{ if } p = o(p^*) \\ 1 & \text{ if } p = \omega(p^*) \end{cases}
$$
when $n$ tends to infinity.
Some properties $\mathcal{F}$ even have a \emph{sharp threshold} in the sense that the following holds:
$$ \mathbb{P} (G \sim G_{n,p} \text{ satisfies } \mathcal{F}) \rightarrow \begin{cases} 0 & \text{ if } p \leq (1+o(1))p^* \\ 1 & \text{ if } p \geq (1+o(1))p^* . \end{cases}
$$

\medskip

{\bf Connectivity and Hamiltonicity game.} Consider the $(1:b)$ Maker-Breaker \emph{connectivity game} and \emph{Hamiltonicity game} played on the edges of some graph $G$, where the winning sets are all spanning subgraphs and Hamilton cycles of $G$, respectively. The connectivity game was first studied by Lehman~\cite{lehman1964solution}, who showed that Maker as the second player wins the $(1:1)$ Maker-Breaker connectivity game on any graph $G$ if and only if $G$ contains two edge-disjoint spanning trees. A natural next step then was to look at the $(1:b)$ version of the connectivity game played on $K_n$. For this, Chv\'atal and Erd\H{o}s~\cite{CE} proved that the threshold bias is bounded from above by $(1 + o(1))n / \ln n$, and this bound trivially carries over to the Hamiltonicity game as well. A matching lower bound for the connectivity game was later proven by Gebauer and Szabó \cite{gebauer2009asymptotic}, and for the Hamiltonicity game by Krivelevich \cite{K11}.

\medskip

Now, let us compare these results with Maker-Breaker games in which both players play randomly. Then, having bias $(m:b)$, Maker's final graph behaves similarly to a random graph $G \sim G_{n,p}$ with $p= m/(m+b)$. It is well known that the (sharp) threshold probability $p^*$ for $G \sim G_{n,p}$ being connected or containing a Hamilton cycle is of size $(1+o(1))\ln n / n$ (see e.g. \cite{Bollobas,janson2011random}). If $m=1$ this corresponds to $b = (1 + o(1))n / \ln n$, which matches the above mentioned threshold biases perfectly. Thus, for almost all values of $b$, the outcome of the $(1:b)$ Maker-Breaker connectivity game and Hamiltonicity game on $K_n$ is most likely the same, no matter if both players play perfectly or just pick their edges at random. Usually this phenomenon is referred to as the \emph{probabilistic intuition}. There is a huge variety of games which fulfil this intuition, for example the perfect matching game, the Hamiltonicity game \cite{K11}, and the $(m:b)$ connectivity game if $m = o(\ln n)$ \cite{hefetz2011doubly}, but there also exist games for which this intuition fails, such as the $H$-game \cite{bednarska2000biased},
the $K_t$-factor game~\cite{liebenau2022threshold} or the diameter game \cite{balogh2009diameter}.

\medskip

Instead of giving Breaker more power by increasing his bias, one can also give him more power by playing unbiased games on a thinner board. This study was initiated by Stojakovi\'{c} and Szabó \cite{milos_tibor}, when they considered Maker-Breaker games played on a random graph $G \sim G_{n,p}$. We say that a graph $G \sim G_{n,p}$ has a property $\mathcal{F}$ asymptotically almost surely (a.a.s.), if $\mathbb{P}(G \text{ has property }\mathcal{F}) \rightarrow 1$ for $n \rightarrow \infty$. Stojakovi\'{c} and Szabó looked at a variety of games and focused on the threshold probability $p^*$ at which an almost sure Breaker's win turns into an almost sure Maker's win. Since the property of Maker having a winning strategy is monotone increasing, the existence of such a threshold is guaranteed as mentioned above. For the connectivity game and the Hamiltonicity game the threshold probability needs to satisfy $p^* \geq (1+o(1)) \ln n / n$, since for smaller $p$ a random graph $G \sim G_{n,p}$ almost surely contains isolated vertices. A matching upper bound for the connectivity game was proven by Stojakovi\'{c} and Szabó, while an analogous bound for the Hamiltonicity game was obtained later in~\cite{hefetz2009sharp}.

\medskip

{\bf Connector-Breaker and Walker-Breaker games.} Another way to increase Breakers power in the game is to put restrictions on the edges that Maker is allowed to claim. Recently, games with such restrictions have received increasing attention under various names, see e.g.~\cite{corsten2020odd,forcan2020walkermaker,mikalavcki2022spanning,london2018spanning}.

As a first type let us consider Connector-Breaker games, which were introduced by London and Pluh\'ar~\cite{london2018spanning} under the name PrimMaker-Breaker games and which have also been discussed in~\cite{corsten2020odd}. In this variant \emph{Connector} (in the role of Maker) needs to pick her edges in such a way, that her graph stays connected throughout the game. London and Pluh\'ar proved the following Connector-Breaker variant of Lehman's theorem for the unbiased Connector-Breaker connectivity game on $K_n$, in which Connector wins if and only if she occupies a spanning tree.

\begin{thm}\label{thm:cb-connectivity}
	Playing the $(1:1)$ Connector-Breaker connectivity game on a graph $G$ with $n$ vertices, Connector has a winning strategy as the first player if and only if $G$ contains a copy of $H_n$, where $H_n$ is obtained from the complete bipartite graph $K_{n-2,2}$ by putting an additional edge inside its two-element color class.
\end{thm}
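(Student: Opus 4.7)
The plan is to prove the two directions of the equivalence separately.

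For \emph{sufficiency}, I would fix a specific copy of $H_n$ inside $G$ with bipartition $\{b_1,b_2\}\cup A$, $|A|=n-2$. Connector opens by claiming $b_1 b_2$ and then plays a standard pairing strategy on the $n-2$ pairs $P_v := \{vb_1, vb_2\}$, one per $v \in A$: after each Breaker move, if Breaker played into a pair in which she owns no edge yet, she claims the partner of that edge; otherwise she proactively claims an edge of an arbitrary pair in which she has no edge yet. All her subsequent edges are incident to $\{b_1,b_2\}$, so her component stays connected throughout; and after every reply of hers the number of pairs still containing no Connector-edge drops by one, so she eventually owns at least one edge of every $P_v$. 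Her final graph then contains a spanning double-star and she wins.

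For \emph{necessity}, the first step is a short structural observation: since any two universal vertices of a graph are automatically adjacent (and hence witness a copy of $H_n$ on their centres), $G$ contains $H_n$ if and only if $G$ has two vertices of degree $n-1$. So I may assume $G$ has at most one universal vertex. Let $u_1 u_2$ be Connector's opening move. At least one of $u_1, u_2$---say $u_1$---is non-universal, which lets me fix a ``target'' $w \in V(G)\setminus\{u_1,u_2\}$ with $u_1 w \notin E(G)$. Breaker's plan is to prevent $w$ from ever joining Connector's component $C_t$: on his first move he claims $u_2 w$ if this edge exists (and plays arbitrarily otherwise), and thereafter, whenever Connector's most recent move has attached a new vertex $v \in N(w)$ to $C_t$, he replies by claiming the edge $vw$. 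This maintains the invariant that every edge of $G$ joining $w$ to $C_t$ is already Breaker's, so $w$ never enters $C_t$ and Connector cannot complete a spanning subgraph.

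The delicate point---and in my view the main obstacle---is checking that the blocking edge $vw$ is still free at the moment Breaker needs it. Here Connector's own connectedness rule works in Breaker's favour: before Connector's move that added $v$, both $v$ and $w$ were outside $C_{t-1}$, so claiming $vw$ earlier would have produced a disconnected graph for Connector, contradicting the PrimMaker rule; hence Connector cannot have owned $vw$ before move $t$, meaning $vw$ is either already Breaker's (nothing to do) or still free (Breaker grabs it now). A routine induction then closes the necessity argument. Thus the real difficulty is not devising Breaker's strategy but rather (a) noticing the clean reformulation of ``$G$ contains $H_n$'' as ``$G$ has two universal vertices'', and (b) identifying the invariant whose maintenance is essentially forced by Connector's own rule.
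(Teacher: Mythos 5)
The paper only cites Theorem~\ref{thm:cb-connectivity} from London and Pluh\'ar and does not reproduce a proof, so there is no in-paper argument to compare against; I can only judge your proposal on its merits, and it is correct. Your reformulation of ``$G$ contains $H_n$'' as ``$G$ has two universal vertices'' is exact (two universal vertices are automatically adjacent, and conversely each centre of $H_n$ already has degree $n-1$). For sufficiency, the pairing strategy on $P_v=\{vb_1,vb_2\}$ automatically keeps Connector's graph connected because every response is incident to $\{b_1,b_2\}$, and the usual invariant that Breaker never completes a pair guarantees both that a response edge is free when needed and that a proactive move is always available while some pair is still empty for Connector. For necessity, the isolation argument works because the Connector rule itself is what guarantees the blocking edge $vw$ is available to Breaker: since both $w$ and the newly attached $v$ lie outside $C_{t-1}$, Connector can never have claimed $vw$ earlier, so the edge is either already Breaker's or free; a routine induction then maintains the invariant that all edges from $w$ into Connector's component are Breaker's. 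One small point worth making explicit, though you implicitly use it, is that the chosen target $w$ is distinct from $u_2$ because Connector's opening move forces $u_1u_2\in E(G)$ while $u_1w\notin E(G)$. With that noted, the argument is complete.
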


From the above result it immediately follows that for Maker to even have a chance in the unbiased game on $G \sim G_{n,p}$, the graph has to be extremely dense. In particular, the threshold probability $p^\ast$ is close to $1$. Furthermore, one can easily see that for any $b \geq 2$ the $(1:b)$ Connector-Breaker connectivity game is won by Breaker on every graph $G$. Hence, the mentioned restriction leads to a big difference in the outcome of a perfectly played game when compared to the usual Maker-Breaker games.

Based on the above facts, London and Pluh\'ar~\cite{london2018spanning} questioned whether similar differences can still be observed if Maker has a bias larger than 1. For the $(2:b)$ Connector-Breaker game on $K_n$ they proved the following.

\begin{thm}\label{thm:cb-connectivity2}
	Let $n$ be large enough. Playing the $(2:b)$ Connector-Breaker connectivity game on $K_n$, Connector wins if $b < n /(8 \ln n)$ and Breaker wins if $b > n / \ln n$.
\end{thm}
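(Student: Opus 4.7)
The plan is to address the two implications separately.

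For the Connector side ($b < n/(8\ln n)$), I would let Connector maintain a tree $T_i$ after round $i$ and grow it using a \emph{min-danger} strategy: in each round she identifies the two vertices $v, w \notin V(T_i)$ for which the number of still-available edges to $V(T_i)$, call it $f_i(\cdot)$, is smallest, and spends her two moves attaching $v$ and then $w$ to the tree via such free edges. This is a legal Connector move because her two new edges both meet $T_i$, so the graph stays connected. To verify this strategy succeeds I would introduce the exponential-weight potential $\Phi_i = \sum_{v \notin V(T_i)} \alpha^{-f_i(v)}$ for a suitable constant $\alpha > 1$. In a single round Breaker's $b$ moves multiply $\Phi_i$ by at most $\alpha^b$, whereas Connector's two attachments both delete the heaviest surviving terms and shift each remaining $f_i(\cdot)$ upward by roughly $2$ (because $|V(T_i)|$ grows by $2$), yielding a factor of about $\alpha^{-2}$. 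With $\alpha = 2$, the assumption $b < n/(8\ln n)$, and the fact that the game lasts at most $n/2$ rounds, one can track $\Phi_i$ and show $\Phi_i < 1$ throughout, which forces $f_i(v) \geq 1$ for every outside $v$ and thus Connector can always extend.

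For the Breaker side ($b > n/\ln n$), the key structural observation is that Connector's tree absorbs at most two new vertices per round, so $|V(T_i)| \leq 2i + 2$. Breaker's goal is therefore to make some vertex $v \notin V(T_i)$ \emph{dead}, i.e.\ to claim every edge from $v$ to $V(T_i)$ before $v$ ever enters the tree. I propose a symmetric box-game type strategy: introduce a potential $\Psi_i = \sum_{v \notin V(T_i)} \beta^{-g_i(v)}$, where $g_i(v)$ is the number of edges from $v$ to $V(T_i)$ not yet claimed by Breaker, and have Breaker repeatedly play all his $b$ edges into the box $B_{v^*}$ with currently minimum $g_i$. Connector's two adjunctions can increase each $g_i(v)$ by at most $2$ (and remove two terms), whereas Breaker's $b$ moves multiply the dominant term of $\Psi_i$ by $\beta^b$. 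With $\beta$ tuned so that the per-round Breaker gain beats the Connector loss, which is possible precisely when $b > n/\ln n$, one shows $\Psi_i$ grows to force some $g_i(v) = 0$ before Connector finishes; at that moment $v$ is dead and Breaker wins.

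The main obstacles lie in the calibration on both sides. On the Connector side, the constant $1/8$ in the threshold arises from the fact that Connector nets only two moves while Breaker nets $b$, so one must optimize $\alpha^{-2}\alpha^b$ over the $\Theta(\ln n)$-length game; an extra subtlety is that an edge $(u,v)$ Breaker plays while both $u,v$ are outside $T$ is "free of charge" for him but may later enter a box once $u$ joins $T$, and the analysis has to absorb this pre-blocking either into the definition of $f_i$ or via a slight loss in the constant. On the Breaker side, the delicate point is that Connector decides which vertices to adjoin to $T_i$, so Breaker cannot pin down a single target vertex; this forces the use of the symmetric potential $\Psi_i$ summed over all currently outside vertices rather than a fixed "victim", and the choice of $\beta$ must balance the cost of Connector's repeatedly rescuing the current minimum-$g_i$ candidate against the gain Breaker extracts by re-targeting whoever becomes the new minimum.
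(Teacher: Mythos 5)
The paper does not actually prove Theorem~\ref{thm:cb-connectivity2}: it cites the result as a theorem of London and Pluh\'ar (reference \cite{london2018spanning}) and uses it only as context. So there is no ``paper's own proof'' to compare against, and your proposal has to stand or fall on its own. Unfortunately, it has a genuine conceptual gap, and it is precisely the gap whose resolution is what makes $(2:b)$ Connector--Breaker behave so differently from $(1:b)$.

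Your Connector strategy asks her, in each round, to attach the two outside vertices $v,w$ of smallest $f_i$ \emph{directly} to $T_i$ by free edges. For this to be legal you need $f_i(v)\ge 1$ for every outside $v$ at all times, and you try to enforce this via $\Phi_i=\sum_{v\notin V(T_i)}\alpha^{-f_i(v)}<1$. But this cannot hold: with $\alpha=2$, the initial potential is $\Phi_0\approx n\cdot 2^{-1}\gg 1$, since each outside vertex has only one possible edge into $T_0=\{a\}$. More tellingly, Breaker \emph{can} drive some $f_i(v)$ down to $0$ almost immediately --- in the very first round (Connector first, so $|T_1|=3$) he spends just $3\le b$ edges on $va,vu_1,vu_2$ and $f_1(v)=0$. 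At that moment your strategy, which instructs Connector to attach the minimum-$f$ vertex directly, simply has no legal move. The reason the theorem is nevertheless true is a mechanism your $f_i$ does not see at all: with bias $2$, Connector can in a \emph{single} round first attach a fresh vertex $u$ to $T$ and then attach $v$ through the new edge $uv$, i.e.\ reconnect a ``one-step-dead'' vertex by a path of length $2$. Breaker cannot pre-block this without claiming essentially all $n-1$ edges at $v$, not just the $O(i)$ edges from $v$ into $T_i$. This two-step rescue is exactly why $(1:b)$ is lost for all $b\ge 2$ while $(2:b)$ survives up to $b=\Theta(n/\ln n)$; any correct proof has to build a danger function around two-step reachability rather than around $f_i(v)=$ free $v$--$T_i$ edges, and nothing in your ``calibration'' absorbs this --- it is not a constant-factor loss but the whole point of the theorem.

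The same blind spot undermines your Breaker side: driving $g_i(v)$ to $0$ (no free $v$--$T_i$ edges) does not defeat Connector, because she can still reach $v$ in two steps through an outside vertex. To actually isolate $v$, Breaker must claim \emph{every} edge at $v$, which is a box of size $n-1$, and his strategy has to be analysed as a Box-game against a destroyer who removes up to two boxes per round (and sometimes fewer, when she has to spend both moves on a single two-step rescue). Your potential $\Psi_i$ tracks the wrong quantity. In short, the overall ``potential/box-game'' shape of the argument is reasonable, but the specific danger functions you chose, and the invariant $\Phi_i<1$, do not reflect how this game actually works; fixing this requires a qualitatively different $f_i$, not a tighter choice of $\alpha$ or $\beta$.
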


Hence, we see that for Connector-Breaker games a small change in Connector's bias can lead to a big difference with respect to the threshold bias. Moreover, Theorem~\ref{thm:cb-connectivity2} shows that the threshold bias in the $(2:b)$ Connector-Breaker variant is of the same order as in the Maker-Breaker variant. 

\smallskip

A natural next question, which has also been asked by London and Pluh\'ar~\cite{london2018spanning}, then is whether the threshold probability $p^\ast$ for a game on $G_{n,p}$ highly depends on the given biases, and moreover, whether for large enough biases the threshold probability $p^\ast$ gets closer to the corresponding threshold for Maker-Breaker games. A partial answer on this
was obtained by Kirsch and the first and third author~\cite{clemens2021connector}. They showed that the threshold probability in this case is of size $n^{-2/3 + o(1)}$, which is very different from the earlier mentioned Maker-Breaker results.

\medskip

Walker-Breaker games further restrict the possible choices for \emph{Walker} (in the role of Maker): She is only allowed to claim edges of the board according to a walk. That is, in each round she must either claim a free edge incident to her current position or walk along an edge that she claimed earlier in the game, hence making its other endpoint the new position of Walker. These games were introduced by Espig, Frieze, Krivelevich, and Pegden \cite{espig2015walker}, and further studied by the first author and Tran \cite{clemens2016creating} as well as by Forcan and Mikala\v{c}ki (see e.g. \cite{forcan2020walkermaker,mikalavcki2022spanning}), amongst others. For the $(2:b)$ Walker-Breaker connectivity and Hamiltonicity game on $K_n$, Forcan and Mikala\v{c}ki were able to show that the threshold bias is again of the order $n / \ln n$, which behaves similarly to the $(2:b)$ Connector-Breaker variant discussed above. 

It then is natural to ask whether the $(2:2)$ Walker-Breaker connectivity on $G_{n,p}$ behaves similarly
to the $(2:2)$ Connector-Breaker connectivity on $G_{n,p}$ as well.
In this paper we will show that this is indeed the case, and extend our result further to give a winning strategy when Connector aims for a Hamilton cycle.

\begin{thm}\label{thm:main}
	Let $\eps\in (0,1)$. Then, for $p\geq n^{-2/3+\eps}$, playing a $(2:2)$ Walker-Breaker game on the edges of a random graph $G\sim G_{n,p}$, Walker a.a.s.~has a strategy to occupy a Hamilton cycle.
\end{thm}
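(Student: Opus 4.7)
My plan is to prove Theorem~\ref{thm:main} in two main phases, reusing the $(2:2)$ Connector-Breaker connectivity strategy of~\cite{clemens2021connector} wherever possible, and leveraging the pseudo-random properties of $G\sim G_{n,p}$ that hold for $p\geq n^{-2/3+\eps}$. The regime $np\gg n^{1/3}$ is large enough that Chernoff plus a union bound gives the following a.a.s.: every vertex has degree $(1\pm o(1))np$; every set $S$ with $|S|\leq n/\log n$ satisfies $|N_G(S)|\geq c\,|S|\,np$ for some $c>0$; and every pair of disjoint linear-sized sets spans $\Theta(n^2p)$ edges. These expansion and density bounds are the key pseudo-randomness input for both phases of the strategy below.

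In Phase~I, Walker builds a long walk covering nearly every vertex while maintaining that her graph is expander-like. The $(2:2)$ bias allows her to dedicate one edge per round to \emph{structural progress} (emulating the Connector-Breaker strategy so that her subgraph remains connected and expanding) and the other edge to \emph{repositioning}, i.e., moving the walk's endpoint to a vertex with many free neighbours whenever the current endpoint is threatened by Breaker. The invariant to maintain is that Walker's current position always has plenty of unclaimed edges to useful new vertices, along which she can then extend the walk. The output of this phase will be a long Walker-path $P$ contained in a spanning connected Walker-subgraph with good expansion.

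In Phase~II, Walker converts her long path into a Hamilton cycle using P\'osa's rotation-extension technique. The expansion built in Phase~I guarantees that at every stage a linear number of \emph{boosters} (edges of $G$ whose addition either extends the current longest path or closes it into a cycle) remain unclaimed. Walker reaches an appropriate endpoint of her current longest path by walking along edges she has already claimed, which costs no new move since she only needs to claim new edges; she then claims one booster per round as her productive move. The standard P\'osa bootstrapping shows that $O(\log n)$ boosters suffice to upgrade $P$ to a Hamilton cycle.

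The main obstacle, in my view, lies in Phase~I: since Walker cannot claim edges far from her current position, Breaker can try to corner her by repeatedly blocking the few free edges at her current endpoint, forcing her to spend moves retreating along previously claimed edges rather than making progress. Handling this requires a delicate interplay between the Connector-Breaker-style expansion argument and a fugitive-style strategy for the walk: for instance, maintaining a pool of safe repositioning targets and arguing that Breaker cannot exhaust it within the time budget dictated by the $(2:2)$ bias. A secondary challenge in Phase~II is to verify that enough boosters remain free in $G$ despite all the edges Breaker has accumulated by then, but this should follow from standard counting once Phase~I delivers a spanning expander.
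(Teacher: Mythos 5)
Your proposal takes a genuinely different route from the paper, and it contains at least one concrete error plus a gap exactly where the real difficulty lies.

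The paper never uses P\'osa rotation--extension. It proves the stronger Theorem~\ref{thm:main.general}: for any $(p,\eps)$-resilient monotone property $\mathcal{P}$, Walker a.a.s.~occupies a graph in $\mathcal{P}$; Hamiltonicity then follows from the Lee--Sudakov local-resilience theorem. The strategy is anchored at a fixed hub vertex $a$, uses copies of a tree-like gadget $\mathcal{S}_k$ (density just below $3/2$, matching the $n^{-2/3}$ threshold) that let Walker reach any target vertex $x$ from $a$ within a constant number of rounds, runs a Continuous Box Game to guarantee Breaker cannot ruin too many of the relevant gadgets for any unvisited $x$, and runs a randomized exposure substrategy adapted from Ferber et al.~so that Walker's graph eventually dominates, up to an $\eps$-fraction at every vertex, a random subgraph $H\sim G_{n,p/\ln n}$. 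Resilience then finds a Hamilton cycle inside $W\cap H$ with no need for Walker ever to trace it as a walk.

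The concrete error is in your Phase~II: walking along already-claimed edges is not free. Each such traversal consumes one of Walker's two moves in that round. Repositioning across a spanning path to its far endpoint therefore costs $\Omega(n)$ rounds, during which Breaker also claims $\Omega(n)$ edges, and P\'osa rotation--extension requires repeatedly repositioning at rotated endpoints and then finding free booster edges there. Without a bounded-diameter hub the time-budget arithmetic is not obviously favourable, and, more seriously, you would need an adversarial argument that Breaker cannot target the boosters around the endpoints Walker eventually walks to; that is not a routine counting fact in the walk-constrained setting. Your Phase~I is likewise underspecified at the crux: ``maintaining a pool of safe repositioning targets'' is precisely what Breaker, with equal bias, will try to destroy, and the expansion bounds you cite concern the host graph $G$, not Walker's adversarially built subgraph under the walk constraint. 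The paper's Lemma~\ref{lemma:technical}, the $\mathcal{S}_k$ gadgets, and the box-game bookkeeping are exactly the mechanism you call a ``delicate interplay'' to be handled; that interplay is the substance of the proof, not a detail.
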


Note that this result is optimal up to the constant $\eps$ in the exponent. Indeed, by the result from~\cite{clemens2021connector} it follows that for 
$p\leq n^{-2/3-\eps}$ a.a.s.~Breaker has a strategy to prevent Connector (and thus the same is true for Walker) from creating any spanning structure. Moreover, for the Maker side we even prove a slightly stronger statement in Section~\ref{sec:strategy} (see Theorem \ref{thm:main.general}) as one of the substrategies in our main strategy uses the approach from~\cite{F15Gen} which allows to create a graph which behaves almost like a typical random graph. Further details can also be found in Section~\ref{sec:prelim}.

\smallskip

\subsection{Organization of the paper.} In Section~\ref{sec:prelim} we introduce some known results from the theory of positional games as well as some basic facts from the probabilistic method. In Section~\ref{sec:structures} we introduce some graph theoretic structures which are needed to describe Walker's strategy and state a Technical Lemma on the distribution of such structures in a random graph. Based on this Technical Lemma, we state Walker's strategy in Section~\ref{sec:strategy} and prove that a.a.s.~it constitutes a winning strategy in the $(2:2)$ game on $G_{n,p}$ for $p\geq n^{-2/3+\eps}$. 
The proof of the Technical Lemma is given in Section~\ref{sec:technical}. Afterwards, we end the paper with some concluding remarks in Section~\ref{sec:concluding}.

\subsection{Notation}
Most notation used in our paper is standard.
For any graph $G$, we use $V(G)$ and $E(G)$ to denote its vertex set and edge set, respectively, and we set $v(G)=|V(G)|$ as well as $e(G)=|E(G)|$. Given $A,B\subseteq V$, 
we write $E_G(A,B)$ for all edges that have one endpoint in $A$ and the other in $B$; also
$e_G(A,B):=|E_G(A,B)|$, $E_G(A):=E_G(A,A)$ and
$e_G(A):=|E_G(A)|$. For any vertex $v\in V$,
we let $N_G(x)$ be the neighbourhood of $x$ in $G$, we set $N_G(x,A):=N_G(x)\cap A$ and we
let $\deg_G(x)=|N_G(x)|$ denote the degree of $x$ in $G$. Moreover, we write $G-v$ for the graph obtained from $G$ by deleting the vertex $v$ and all its incident edges. Whenever it is clear which graph is been looked at, we may omit the subscript $G$ in the above notation.

Assume a Walker-Breaker game is in progress. Then we denote with $W$ and $B$ the
set of edges which have already been claimed by Walker and Breaker, respectively. Moreover, we let $V(W)$ be the set of vertices incident to at least one edge from $W$. An edge which is neither claimed by Walker nor claimed by Breaker is called a free edge; the set of all free edges is denoted $F$. Additionally, we say that an edge is available if it belongs to $F\cup W$.

Given any graph $G$, we write $G_p$ for the random graph model obtained as follows: having the vertex set $V(G)$ fixed, each edge of $G$
is taken to be an edge of $G_p$ randomly and independently with probability $p$.

To simplify some calculations, we use the following notation: Let $f:\mathbb{N}\rightarrow \mathbb{R}$ and $g:\mathbb{R}\times \mathbb{N} \rightarrow \mathbb{R}$ be functions such that $g$ is monotone in the first variable. Then for every $n\in\mathbb{N}$ and $\alpha\in \mathbb{R}$ we write $f(n)=g(\pm \alpha, n)$ to say that
$g(-\alpha,n)\leq f(n) \leq g(+\alpha,n)$. For instance, we would write $f(n)=\ln^{\pm \alpha}(n)$ instead of $\ln^{-\alpha}(n)\leq f(n) \leq \ln^{+\alpha}(n)$.

\section{Preliminaries}\label{sec:prelim}

\subsection{Beck's winning criterion}

One ingredient for Walker's strategy is Beck's winning criterion for biased Maker-Breaker games.

\begin{thm}\label{thm:beck}[Theorem 1 in~\cite{beck1982remarks}]
	Let $a,b\in \mathbb{N}$, and let $(\mathcal{X},\mathcal{F})$ be a hypergraph such that
	$$
	\sum_{F\in \mathcal{F}} (1+b)^{-|F|/a}< \frac{1}{b+1}\, ,
	$$
	then Breaker has a winning strategy for the $(a:b)$ Maker-Breaker game $(\mathcal{X},\mathcal{F})$.
\end{thm}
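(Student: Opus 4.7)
The plan is to prove Beck's criterion by a potential-function argument generalizing the classical Erd\H{o}s--Selfridge theorem to the biased $(a{:}b)$ setting. To each winning set $F\in\mathcal{F}$ that is still \emph{alive}, meaning $F\cap B=\emptyset$ where $B$ is Breaker's current set of claimed elements, assign a weight
\[
w(F) := (1+b)^{-|F\setminus M|/a},
\]
where $M$ denotes Maker's current set; if $F$ is dead (so $F\cap B\neq\emptyset$), set $w(F):=0$. The total potential $T := \sum_{F\in\mathcal{F}} w(F)$ starts below $\tfrac{1}{b+1}$ by hypothesis, and the point of the argument is to prevent $T$ from ever reaching $1$: any fully Maker-occupied winning set $F\subset M$ would be alive and contribute $w(F)=1$, so preserving $T<1$ immediately rules out a Maker win.

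The exponent $1/a$ is calibrated so that Maker's $a$ claims in a single round multiply $T$ by a factor of at most $1+b$. Indeed, when Maker claims one element $m$, each alive set containing $m$ has its weight multiplied by $(1+b)^{1/a}$, and since these sets together have weight at most $T$, a single Maker move scales $T$ by at most $(1+b)^{1/a}$; iterating over $a$ such moves yields the overall factor $1+b$. Breaker's strategy is the natural greedy one: at each of his $b$ moves within the round, claim the free element $y$ maximizing
\[
w(y) := \sum_{F\ni y,\, F \text{ alive}} w(F),
\]
which by definition kills every alive set through $y$ and hence decreases $T$ by exactly $w(y)$.

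The plan then is to argue, by induction on rounds, that Breaker's $b$ greedy moves collectively undo the factor $1+b$ growth caused by Maker, so that the invariant $T<\tfrac{1}{b+1}$ is restored before every new Maker turn. The main obstacle is precisely this balancing computation: one must show that the maximum-weight free element always carries a sufficiently large share of the current potential, so that the product of the $b$ Breaker-step reductions telescopes down by a factor of at least $1+b$. I would handle this by combining the multiplicative upper bound on Maker's potential growth with Breaker's greedy choice property, and verifying the resulting telescoped inequality to conclude that $T$ stays below~$1$ throughout, which as noted above forbids a Maker win.
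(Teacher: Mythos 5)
The paper does not actually prove this theorem; it is quoted from Beck~\cite{beck1982remarks} as a black box, so there is no ``paper proof'' to compare against. Evaluating your attempt on its own terms, it uses the right potential function $w(F)=(1+b)^{-|F\setminus M|/a}$ and correctly bounds Maker's round of $a$ claims by a multiplicative factor $(1+b)$, but the remainder of the plan rests on a claim that is false.

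The gap is in the proposed invariant. You want Breaker's $b$ greedy picks to ``collectively undo the factor $1+b$ growth'' so that $T<\tfrac{1}{b+1}$ is restored before every Maker turn, and you acknowledge the crux is showing the maximum-weight free element carries ``a sufficiently large share of the current potential.'' That is simply not available: if the hypergraph consists of many pairwise disjoint winning sets of tiny individual weight, no free element carries more than a negligible fraction of $T$, and $b$ greedy removals cannot shrink $T$ by a factor of $b+1$ (they can only subtract additively). So the induction as stated cannot close.

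Beck's actual argument does not try to restore $T<\tfrac{1}{b+1}$ each round; instead it uses Breaker's removals in round $i$ to pre-pay for Maker's increase in round $i+1$, and the invariant that is maintained is only that $T$ never exceeds its value immediately after Maker's first round (which is $<1$, and $T\ge 1$ would be necessary for a fully Maker-claimed set). Concretely, with $\lambda=(1+b)^{1/a}$, if Breaker picks $y_1,\dots,y_b$ greedily then the removed weights satisfy $w_1(y_1)\ge\cdots\ge w_b(y_b)$ (each pick only decreases all subsequent weights), so Breaker's total decrease is at least $b\cdot w_b(y_b)$. On the other hand, if Maker's next $a$ picks are $m_1,\dots,m_a$, then each $w(m_i)$ at the time of claiming is at most $\lambda^{i-1}$ times the maximum free weight left after Breaker's round, which is at most $w_b(y_b)$; hence Maker's total increase is at most $(\lambda-1)\sum_{i=0}^{a-1}\lambda^i\,w_b(y_b)=(\lambda^a-1)w_b(y_b)=b\cdot w_b(y_b)$. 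The two bounds match, so the potential after any Maker round never exceeds its value after Maker's first round, which is $<(b+1)T_0<1$. Your write-up has the right weight function and the right greedy strategy, but the accounting scheme is wrong in a way that cannot be patched without replacing the ``restore $T<1/(b+1)$'' invariant by this forward-looking pairing.
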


\subsection{Continuous Box game} 

Another ingredient we use is the Continuous Box game defined as follows.
The game $CBox(b,1;a_1,\ldots,a_n)$ is played with $n$ pairwise disjoint boxes $F_i$, each with positive real weight $a_i$. The game is played between CMaker and CBreaker. In every round, CMaker claims weights from the boxes such that the total sum of the claimed weights is at most $b$, 
while CBreaker solely removes one box from the game (we may say that CBreaker destroys that box). If, during the game, CMaker succeeds in claiming all the weight of a box, she is declared the winner of the game. Otherwise, i.e. when CBreaker succeeds in destroying all boxes, Breaker wins. Let $\mathcal{S}$ be a strategy for CBreaker where he always destroys a box in which CMaker claims the largest weight.
The following lemma is an easy consequence of the results from~\cite{Hefetz11continuousbox}.

\begin{lemma}\label{lem:Cboxgame}
	Let CMaker and CBreaker play the game 
	$Box(b,1;a_1,\ldots,a_n)$ with boxes of size 
	$a_i$. Then following the strategy $\mathcal{S}$, 
	CBreaker can ensure that the following holds 
	throughout the game: 
	If $F_i$ is a box which is still not destroyed by CBreaker,
	then the weight claimed by CMaker in box $F_i$ is 
	at most $b(\ln n + 1)$.  
\end{lemma}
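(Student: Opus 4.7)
My plan is to adapt the classical Chv\'atal--Erd\H{o}s potential argument from the discrete box game, using base $e$. Set $a := b(\ln n + 1)$ and, for each round $t$, consider the potential
\[
\Phi_t := \sum_{i \in A_t} e^{(W_i(t)-a)/b},
\]
where $A_t$ is the set of boxes not yet destroyed by end of round $t$ and $W_i(t)$ is the cumulative weight that CMaker has claimed in $F_i$. A short calculation gives $\Phi_0 = n\cdot e^{-(\ln n + 1)} = 1/e$, and conversely if any alive box ever had $W_i(t) \geq a$, its single summand alone would already force $\Phi_t \geq 1$. Hence the entire lemma reduces to showing that $\Phi_t$ is non-increasing under the strategy $\mathcal{S}$.

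I would then carry out the single-round analysis. In round $t+1$, CMaker distributes non-negative weights $w_i$ with $\sum_i w_i \leq b$; writing $x_i := e^{(W_i(t)-a)/b}$ and $y_i := x_i e^{w_i/b}$, the elementary inequality $e^x - 1 \leq x\,e^x$ (valid for $x \geq 0$) yields
\[
\sum_i y_i - \Phi_t \;=\; \sum_i x_i(e^{w_i/b}-1) \;\leq\; \frac{1}{b}\sum_i y_i w_i \;\leq\; \frac{1}{b}\max_i y_i \cdot \sum_i w_i \;\leq\; \max_i y_i.
\]
Under $\mathcal{S}$, CBreaker then destroys a box $j^*$ in which the weight claimed by CMaker is largest, i.e.\ $j^* \in \arg\max_i (W_i(t) + w_i) = \arg\max_i y_i$, which removes precisely $y_{j^*}$ from the sum and cancels the increment just estimated. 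Consequently $\Phi_{t+1} \leq \Phi_t$, and by induction $\Phi_t \leq 1/e < 1$ for every $t$, yielding the claim.

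The principal (though mild) obstacle is the calibration of constants. The base of the exponential must be exactly $e$: requiring $\Phi_0 \leq 1$ forces $a \geq b \ln n$, and redoing the per-round estimate with a general base $c$ produces a leading factor $\ln c$ in the increment, so that $\ln c \leq 1$ is needed for $\Phi_t$ to be non-increasing. The sharp tangent-line bound $e^x - 1 \leq x\,e^x$, rather than the coarser secant $(e-1)x$, is likewise essential so that the entire weight budget $b$ is absorbed into the \emph{single} term $\max_i y_i$ that CBreaker removes, without incurring an extra multiplicative constant.
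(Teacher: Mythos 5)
Your proof is correct and self-contained. Note that the paper does not actually supply a proof of this lemma: it simply states that it is ``an easy consequence of the results from'' the cited Continuous Box game paper of Hefetz, Krivelevich, Stojakovi\'c and Szab\'o. So there is no internal argument to compare against. Your potential-function argument is the natural continuous adaptation of the classical Chv\'atal--Erd\H{o}s discrete box-game analysis, and it is most likely close in spirit to what the cited reference does; in that sense you have filled in a proof the paper chose to outsource, which is a reasonable thing to do.

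Every step checks out: $\Phi_0 = n\,e^{-(\ln n+1)} = 1/e$; the tangent-line inequality $e^x-1\le x e^x$ for $x\ge 0$ is valid and indeed tight enough to absorb the whole weight budget $b$ into a single term; and removing the box $j^\ast$ of largest cumulative weight (which is the same as largest $y_i$) cancels that term, giving $\Phi_{t+1}\le\Phi_t\le 1/e<1$, hence $W_i(t)<b(\ln n+1)$ for every surviving box. Two small remarks. First, you implicitly use that CMaker moves before CBreaker in each round and that $\mathcal{S}$ destroys the box maximizing the \emph{cumulative} weight $W_i(t)+w_i$ rather than the per-round increment $w_i$; this is indeed the intended reading of $\mathcal{S}$ (mirroring Chv\'atal--Erd\H{o}s), and your computation $j^\ast\in\arg\max_i y_i$ relies on it, so it is worth stating explicitly. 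Second, the sizes $a_i$ of the boxes play no role in your argument, which is fine: they only matter for determining when CMaker actually wins the auxiliary game, not for the bound on claimed weight in surviving boxes, which is all the lemma asserts.
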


\subsection{MinBox game}
Our last ingredient for Walker's strategy is the MinBox game, a variant of the Box Game~\cite{CE}, which was introduced in \cite{F15Gen} and motivated by~\cite{gebauer2009asymptotic}. Let positive integers $n,D,b$ and a real $\alpha\in (0,1)$ be given. The MinBox$(n,D,\alpha,b)$ is a $(1:b)$ Maker-Breaker game played on a
family of $n$ pairwise disjoint boxes $F_1,\ldots,F_n$, each of size at least $D$, where Maker wins if and only if she occupies at least an $\alpha$-fraction of elements in each of the $n$ boxes.
To analyse the game, let us use the following definitions throughout the game: For each box $F$, let $w_M(F)$ and $w_B(F)$ denote the number of elements that Maker and Breaker have claimed in $F$, respectively, and set $\text{dang}(F) := w_B(F) - b \cdot w_M(F)$. Call $F$ free if every element of $F$ is not claimed yet, and call it active if $w_M(F)<\alpha|F|$. In \cite{F15Gen} the following was proven.

\begin{thm}[Theorem~2.3 in~\cite{F15Gen}]\label{theorem:MinBox}
	Let $n,b,D\in\mathbb{N}$ and $\alpha\in (0,1)$. 
	Assume that in the game MinBox$(n,D,\alpha,b)$ Maker plays as follows: 
	In each turn, Maker chooses an arbitrary free active box $F$ the danger of which is largest, and then she claims a free element of $F$.
	Then, throughout the game $\text{dang}(F) \leq b(\ln n + 1)$
	holds for every active box $F$.
\end{thm}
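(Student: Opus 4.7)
My plan is to use a weighted-potential argument modelled on the classical Chv\'atal--Erd\H{o}s analysis of the biased Box game. Set $c := 1 + 1/b$ and define
\[
\Phi(t) := \sum_{F \text{ active at time } t} c^{\dang(F)}.
\]
Initially every box has danger zero, so $\Phi(0) = n$. The target is to show that $\Phi(t)$ stays within a constant multiple of $n$ throughout the game. Once this is done, any single active box $F$ contributes $c^{\dang(F)}$ to $\Phi(t)$, so $c^{\dang(F)} \leq \Phi(t) = O(n)$; since $\ln c = \ln(1 + 1/b) \geq 1/(b+1)$, taking logarithms yields the desired bound $\dang(F) \leq b(\ln n + 1)$.

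The workhorse is a one-round comparison. If Breaker claims $k_F$ new elements inside box $F$ (with $\sum_F k_F \leq b$), his contribution to $\Delta \Phi$ is $\sum_F c^{d_F}(c^{k_F} - 1)$; using convexity of $x \mapsto c^x$ together with $c^b \leq e$, this is controlled by a constant multiple of $c^{d_{\max}}$, where $d_{\max}$ is the largest danger over active boxes just before Breaker's move. Maker's greedy step, played in the active box $F^\ast$ whose post-Breaker danger $d'_{\max} \geq d_{\max}$ is the largest, drops $\dang(F^\ast)$ by $b$ and so decreases $\Phi$ by $c^{d'_{\max}}(1 - c^{-b}) \geq \tfrac12 c^{d'_{\max}}$. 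A box becoming inactive only strips a non-negative summand from $\Phi$, so deactivations can only help.

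The main obstacle is the scenario in which Breaker, rather than concentrating all $b$ of his moves on the max-danger box, spreads them evenly across several active boxes whose dangers are roughly tied for the maximum. In that case the naive per-round estimate above need not close: Breaker is lifting many summands of $\Phi$ simultaneously while Maker can attack only one of them per round. To close the argument I would amortise over several consecutive rounds, exploiting the greedy rule: once Maker has reduced the current maximum, the next greedy target must be a \emph{different} high-danger box, and iterating this telescopes enough of Maker's reductions to pay for Breaker's spread-out gains. This amortised use of the ``always attack an active box of largest danger'' rule is the heart of the argument, and it is what turns the crude $O(n)$ bound on $\Phi$ into the explicit $b(\ln n + 1)$ bound in the statement.
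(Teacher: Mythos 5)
This statement is imported from Ferber, Krivelevich and Naor (Theorem~2.3 of the cited reference) without proof; their argument is a backward averaging argument in the style of Gebauer and Szab\'o. One assumes some active box reaches danger exceeding $b(\ln n+1)$, traces Maker's greedy moves back in time while enlarging a set of boxes by the one Maker just played in, and shows that the \emph{average} danger over this growing set changes by at most $b/j$ at the $j$-th step; the harmonic series over at most $n$ boxes then caps the total at $b(\ln n+1)$. Your exponential-potential plan is a genuinely different route, and the gap you flag is real: the per-round comparison does not close, the amortisation is not supplied, and I do not think it can be supplied in the form sketched.

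Concretely, with $c=1+1/b$, take a plateau of $m\geq b$ active boxes at danger $d$. Breaker placing one element in each of $b$ of them raises $\Phi$ by $b\cdot c^d(c-1)=c^d$, while Maker's single greedy move (in a box now at danger $d+1$) lowers $\Phi$ by $c^{d+1}-c^{d+1-b}=c^d\bigl(c-c^{1-b}\bigr)$, and a direct check gives $c-c^{1-b}<1$ for every $b\geq 2$ (e.g.\ $5/6$ at $b=2$, tending to $1-e^{-1}$). So $\Phi$ strictly increases by order $c^d$ in every such round, and nothing in the greedy rule forbids Breaker from repeating this: the box Maker just attacked rejoins the plateau once Breaker spends $b$ further elements on it, so the plateau of near-maximal boxes persists and $\Phi$ drifts upward rather than staying $O(n)$. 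The promised telescoping (``the next greedy target must be a different high-danger box'') supplies no compensating decrease, because the same configuration recurs. Finally, even a hypothetical bound $\Phi\leq Cn$ would only yield $\dang(F)\leq \ln(Cn)/\ln(1+1/b)$, and since $\ln(1+1/b)<1/b$ this is at least $b\ln(Cn)$, which exceeds $b(\ln n+1)$ for large $n$; so the calibration $c=1+1/b$ cannot recover the stated constant even in principle. The averaging argument avoids all of this by never aggregating an exponential over all boxes at once.
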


\subsection{Probabilistic tools}

Throughout our probabilistic arguments we make often use of the following Chernoff inequalities (see e.g.~\cite{janson2011random}) that help to verify that a given binomial random variable $X \sim \operatorname{Bin}(n,p)$, where each of $n$ independent rounds has probability $p$ of being successful, is typically close to its expectation $\mathbb{E}(X)=np$.

\begin{lemma}\label{lem:Chernoff1}
	If $X \sim \operatorname{Bin}(n,p)$, then
	\begin{itemize}
		\item $\Prob(X<(1-\delta)np)< \exp\left(-\frac{\delta^2np}{2}\right)$ for every $\delta>0$, and
		\item $\Prob(X>(1+\delta)np)< \exp\left(-\frac{np}{3}\right)$ for every $\delta\geq 1$.
	\end{itemize}
\end{lemma}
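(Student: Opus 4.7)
The plan is to apply the Chernoff (exponential moment) method. For a parameter $t>0$ to be optimized, Markov's inequality applied to $e^{tX}$ gives
\[
\Prob(X\geq a) \;\leq\; e^{-ta}\,\Exp[e^{tX}],
\]
and since $X$ is a sum of $n$ i.i.d.\ Bernoulli$(p)$ variables, the moment generating function factors to give $\Exp[e^{tX}] = (1-p+pe^t)^n$. Using the elementary bound $1+x\leq e^x$ turns this into $\Exp[e^{tX}]\leq \exp(np(e^t-1))$. An analogous one-sided estimate for $\Prob(X\leq a)$ follows by applying Markov to $e^{-tX}$ with $t>0$.

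For the upper tail I would observe first that, since the right-hand side $\exp(-np/3)$ does not depend on $\delta$ and the event $\{X>(1+\delta)np\}$ shrinks with $\delta$, it suffices to prove the claim at $\delta=1$. Setting $a=2np$ and choosing the optimizer $t=\ln 2$ in the Chernoff bound above yields
\[
\Prob(X>2np) \;\leq\; \exp\bigl(-np(2\ln 2 - 1)\bigr),
\]
and the numerical estimate $2\ln 2 - 1 > 1/3$ closes the case.

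For the lower tail, the range $\delta\geq 1$ is trivial since $X\geq 0$ forces $\Prob(X<(1-\delta)np)=0$. For $0<\delta<1$, I take $t=-\ln(1-\delta)>0$ and $a=(1-\delta)np$; the resulting Chernoff estimate reduces the claim to the elementary inequality
\[
\psi(\delta) \;:=\; \delta+(1-\delta)\ln(1-\delta) \;\geq\; \delta^2/2
\]
on $(0,1)$. Since $\psi(0)=\psi'(0)=0$ and $\psi''(\delta)=1/(1-\delta)\geq 1$, Taylor's theorem with integral remainder yields the bound.

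There is really no substantive obstacle here: the only step that requires any thought is choosing the right value of $t$ in each tail, and both optimizers $t=\ln(1+\delta)$ and $t=-\ln(1-\delta)$ fall out of differentiating the exponent in the Chernoff bound. The lemma is entirely classical, and as the paper indicates, in practice one would simply cite \cite{janson2011random} rather than reproduce the half-page computation sketched above.
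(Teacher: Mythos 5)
Your proof is correct. The paper does not actually prove this lemma --- it is stated as a standard fact with a citation to \cite{janson2011random} --- and your argument is precisely the classical exponential-moment proof one finds there: the reduction of the upper tail to $\delta=1$ with $t=\ln 2$ and $2\ln 2-1>\tfrac13$ is sound, and the lower tail correctly reduces to $\delta+(1-\delta)\ln(1-\delta)\geq \delta^2/2$, which your second-derivative/Taylor argument establishes (indeed strictly for $\delta\in(0,1)$, which also gives the strict inequalities claimed whenever $np>0$; the degenerate case $p=0$ is trivial). So there is nothing to compare against in the paper beyond the citation, and your write-up would serve as a complete self-contained substitute for it.
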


\begin{lemma}\label{lem:Chernoff2}
	If $X \sim \operatorname{Bin}(n,p)$ and $k\geq 7 \Exp(X)$, then $\Prob(X\geq k)\leq \exp\left(-k\right)$.
\end{lemma}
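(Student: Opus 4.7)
The plan is to obtain this from the standard moment generating function (Chernoff) method. Since the hypothesis $k \geq 7\Exp(X)$ places us in the deep upper-tail regime where $k$ is a constant factor above the mean, the optimised exponential Markov inequality will already dominate $e^{-k}$; the specific constant $7$ in the hypothesis is chosen precisely so that this comes out cleanly.

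First, I would write $\mu := np = \Exp(X)$ and apply Markov's inequality to $e^{\lambda X}$ for a parameter $\lambda > 0$. Using independence of the $n$ Bernoulli summands together with the inequality $1 + x \leq e^x$, this yields the standard bound
\begin{equation*}
\Prob(X \geq k) \leq e^{-\lambda k} \Exp[e^{\lambda X}] = e^{-\lambda k}\bigl(1 + p(e^{\lambda} - 1)\bigr)^n \leq \exp\bigl(-\lambda k + \mu(e^{\lambda} - 1)\bigr).
\end{equation*}
I would then introduce the ratio $t := k/\mu \geq 7$ (the case $\mu = 0$ being trivial) and make the asymptotically optimal choice $\lambda := \ln t > 0$, under which the exponent collapses to $k\,g(t)$, where
\begin{equation*}
g(t) \;:=\; 1 - \tfrac{1}{t} - \ln t.
\end{equation*}

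It then remains to show $g(t) \leq -1$ for all $t \geq 7$. Since $g'(t) = 1/t^2 - 1/t < 0$ on $(1, \infty)$, the function $g$ is strictly decreasing on $[7, \infty)$, so it suffices to verify $g(7) \leq -1$, i.e.\ $\ln 7 + 1/7 \geq 2$. This follows from the crude estimates $\ln 7 > 1.94$ and $1/7 > 0.14$, giving a sum $> 2.08$, and the conclusion $\Prob(X \geq k) \leq e^{-k}$ follows at once.

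The only delicate point is that the threshold $7$ sits slightly below the naive cut-off $e^2 \approx 7.389$ at which the leading $-k \ln t$ term alone would beat $-k$; the remaining slack is supplied by the lower-order $-k/t$ contribution coming from the $\mu(t-1)$ part of the Chernoff exponent. Consequently the main pitfall is to not discard this term while simplifying, since doing so would force the strictly weaker hypothesis $t \geq e^2$ rather than $t \geq 7$. Aside from this small bookkeeping point, the argument is entirely routine and requires no further ingredients beyond the elementary Chernoff estimate already used (implicitly) in Lemma~\ref{lem:Chernoff1}.
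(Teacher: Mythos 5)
Your proof is correct. The paper does not prove this lemma at all—it simply cites it from the standard reference (it is Corollary~2.4 in Janson, \L{}uczak and Ruci\'nski)—and your argument via the exponential Markov inequality with $\lambda=\ln(k/\mu)$, reducing the claim to $\ln 7 + \tfrac{1}{7}\geq 2$, is exactly the standard derivation of that corollary, so there is nothing to compare beyond noting that your computation (including the role of the $-k/t$ term in pushing the threshold below $e^2$) is accurate.
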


By a standard application of Chernoff's inequality the following bound on the degree in $G_{n,p}$ can be found.

\begin{clm}\label{clm:degrees}
	Let $\eps\in (0,1)$, $p\geq n^{-2/3}$ and $G\sim G_{n,p}$. Then a.a.s.~$d_G(v)=(1\pm \eps)pn$ for every vertex $v\in V(G)$.
\end{clm}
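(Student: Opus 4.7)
The plan is a routine application of Chernoff with a union bound over the $n$ vertices. For any fixed vertex $v$, the degree $d_G(v)$ is a sum of $n-1$ independent Bernoulli($p$) variables, hence $d_G(v)\sim\operatorname{Bin}(n-1,p)$ with expectation $(n-1)p$. I would choose some $\delta<\eps$ (say $\delta=\eps/2$) so that the multiplicative window $(1\pm\delta)(n-1)p$ fits inside $(1\pm\eps)pn$ for all sufficiently large $n$; this uses $(n-1)p=(1-1/n)np$, which is $(1\pm\eps/2)np$ once $n$ is large enough depending on $\eps$.

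Then apply Lemma~\ref{lem:Chernoff1} in both directions to obtain
\[
\Prob\bigl(|d_G(v)-(n-1)p|>\delta(n-1)p\bigr)\leq 2\exp\!\left(-\tfrac{\delta^{2}(n-1)p}{3}\right).
\]
Using the assumption $p\geq n^{-2/3}$, we have $(n-1)p\geq \tfrac{1}{2}n^{1/3}$ for large $n$, so the probability that $v$ violates the desired bound is at most $2\exp\!\left(-c\,\eps^{2}n^{1/3}\right)$ for some absolute constant $c>0$.

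A union bound over the $n$ vertices then gives failure probability at most
\[
2n\exp\!\left(-c\,\eps^{2}n^{1/3}\right)=o(1),
\]
since the subexponential term $n$ is dominated by $\exp(-c\eps^{2}n^{1/3})$. This shows that a.a.s.\ every vertex $v\in V(G)$ satisfies $d_G(v)=(1\pm\eps)pn$, as claimed. There is no real obstacle here; the only mild point to be careful about is the passage between $(n-1)p$ and $np$, which is handled by absorbing the $1/n$ loss into the slack between $\delta$ and $\eps$.
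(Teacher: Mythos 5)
Your proof is correct in substance and is precisely the ``standard application of Chernoff's inequality'' that the paper alludes to (the paper itself gives no proof, only this remark). One small bookkeeping slip: the upper-tail bound in Lemma~\ref{lem:Chernoff1} as stated in the paper only holds for $\delta\geq 1$, so you cannot invoke that lemma ``in both directions'' with $\delta=\eps/2<1$. You need the standard small-deviation upper-tail estimate $\Prob(X>(1+\delta)np)\leq\exp(-\delta^2 np/3)$ valid for $0<\delta<1$ (or a comparable one-sided Chernoff bound not listed in the paper's preliminaries); with that in hand the rest of your argument, including the handling of $(n-1)p$ versus $np$ and the union bound, goes through as written.
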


Since our strategy in Section~\ref{sec:strategy} also uses a randomized substrategy similar to~\cite{F15Gen}, we are able to relate the strategy to local resilience properties of random graphs. For this, we use the following definition. More details follow with Theorem~\ref{thm:main.general} in Section~\ref{sec:strategy}.

\begin{dfn}
	Let $\mathcal{P}=\mathcal{P}(n)$ be a monotone increasing graph property and $\varepsilon,p\in (0,1)$. Then we say that
	$\mathcal{P}$ is $(p,\varepsilon)$-resilient if a random graph $G\sim G_{n,p}$ a.a.s.~satisfies the following:
	For every subgraph $G'\subseteq G$ such that $d_{G'}(v)\leq \varepsilon d_G(v)$ holds for every $v\in V(G)$, it is true that $G\setminus G'\in \mathcal{P}$.
\end{dfn}

Later, Theorem~\ref{thm:main} follows from the more general Theorem~\ref{thm:main.general} together with the following theorem of Lee and Sudakov~\cite{lee2012dirac}.

\begin{thm}[Theorem 1.1 in \cite{lee2012dirac}]
	For $n\in\mathbb{N}$, let $\mathcal{H}=\mathcal{H}_n$ denote the property of containing a Hamilton cycle (on $n$ vertices). Then for every $\varepsilon \in (0,1)$ there exists $C=C(\varepsilon)$ such that the following is true: if $p\geq \frac{C\ln (n)}{n}$, then $\mathcal{H}$ is $(p,\frac{1}{2} - \varepsilon)$-resilient.
\end{thm}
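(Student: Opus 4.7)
The plan is to combine P\'osa's rotation-extension technique with a two-round exposure (``sprinkling'') that decouples the adversary's choice of the deleted subgraph $G'$ from the random edges used to close up a Hamilton cycle. I write $p = 1 - (1-p_1)(1-p_2)$ with $p_2 = \tfrac{C_0 \ln n}{n}$ for a sufficiently large constant $C_0 = C_0(\varepsilon)$ and $p_1$ chosen so that $p_1 = (1-o(1))p$, and expose $G_{n,p}$ as the union of two independent random graphs $G_1 \sim G_{n,p_1}$ and $G_2 \sim G_{n,p_2}$. The adversary chooses $G' \subseteq G_1 \cup G_2$ with $\Delta(G') \leq (\tfrac{1}{2}-\varepsilon)np$; I set $H_0 := G_1 \setminus G'$.

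Next I would establish, by standard Chernoff estimates (Lemmas~\ref{lem:Chernoff1} and~\ref{lem:Chernoff2}) and union bounds, that a.a.s.\ $G_1$ satisfies: every vertex has degree $(1\pm o(1))np_1$; every set $S\subseteq V(G_1)$ with $|S| \leq n/\ln n$ has $|N_{G_1}(S)| \geq \tfrac{np_1}{3}|S|$; and every $S$ with $n/\ln n < |S| \leq n/4$ has $|N_{G_1}(S)\setminus S| \geq 3|S|$. Since $\Delta(G'\cap G_1)\leq (\tfrac12-\varepsilon)np \leq (\tfrac12-\tfrac\varepsilon2)np_1$, these properties transfer to $H_0$ with the $3|S|$ replaced by $2|S|$, so that $H_0$ satisfies P\'osa's expansion condition that every $S$ with $|S|\leq n/4$ has $|N_{H_0}(S)\setminus S|\geq 2|S|$.

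P\'osa's rotation lemma then guarantees that for any longest path $P$ of any graph $H \supseteq H_0$, either $P$ is a Hamilton path ready to be closed or the number of \emph{boosters}, i.e.\ non-edges $uw$ whose addition creates a strictly longer path or a Hamilton cycle, is at least $n^2/100$. I would iterate as follows: split $G_2$ further into $n+1$ independent slices $G_2^{(0)}, \ldots, G_2^{(n)}$, each distributed as $G_{n,p_2/(n+1)}$, and at stage $i$ extend $H_{i-1}$ by a booster found in $G_2^{(i)}\setminus G'$. Each slice contains a booster with probability at least $1 - (1 - p_2/(n+1))^{n^2/100} = 1 - e^{-\Omega(\ln n)}$, so by a union bound all $n$ stages succeed a.a.s., yielding a Hamilton cycle inside $G\setminus G'$.

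The main obstacle is the dependence between $G'$ and the sprinkled slices $G_2^{(i)}$, since the adversary is allowed to look at all of $G_{n,p}$ before choosing $G'$. I would address this by observing that the bound $\Delta(G')\leq(\tfrac12-\varepsilon)np$ forces $|E(G'\cap G_2^{(i)})|$ to be a tiny fraction of $|E(G_2^{(i)})|$ at every vertex, so the lost booster edges in each slice are negligible compared with the $\Omega(n^2)$ total. Making this formal amounts to exposing $G_1$ first, then fixing $G_2^{(0)}$ and arguing that within the class of admissible $G'$'s the conditional law of each subsequent slice is still close enough to uniform to apply the booster union bound; this is precisely the decoupling step carried out in \cite{lee2012dirac}. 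Once it is in place, the rest of the argument is a clean induction on the length of the longest path in $H_{i-1}$.
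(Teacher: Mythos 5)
This is a theorem the paper imports from Lee and Sudakov \cite{lee2012dirac}; the paper contains no proof of it, so there is no in-paper argument to compare against. What can be assessed is whether your sketch would actually establish the result, and as written it would not.

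The fatal step is the sprinkling. You split the fresh randomness into $n+1$ slices $G_2^{(i)}$, each of density roughly $p_2/(n+1)=\Theta(\ln n/n^2)$, so a single slice contains only $\Theta(\ln n)$ edges in expectation, of which only $O(\ln n)$ land in the booster set $B_{i-1}$ of size $\Omega(n^2)$. But the adversary is allowed to choose $G'$ \emph{after} seeing all of $G_1\cup G_2$, and the constraint $\Delta(G')\le(\tfrac12-\varepsilon)np$ permits roughly $\tfrac12 np=\Theta(n^{1/3+\varepsilon'})\gg\ln n$ deletions per vertex. The adversary can therefore simply delete every booster edge in every slice $G_2^{(i)}$ without coming close to the degree budget; the claim that ``the lost booster edges in each slice are negligible compared with the $\Omega(n^2)$ total'' conflates the size of the booster \emph{set} with the number of booster \emph{edges of the slice}, and the relevant quantity is the latter. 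So the iterated extension has no reason to succeed, and deferring this to ``the decoupling step carried out in \cite{lee2012dirac}'' is deferring the whole theorem: Lee and Sudakov precisely do \emph{not} use a naive two-round exposure for this reason. Their proof instead shows, by a delicate union bound over all admissible subgraphs $H\subseteq G$ and all relevant longest-path configurations, that a.a.s.\ $G$ itself carries so many boosters \emph{for every such $H$ simultaneously} that the adversary's degree-bounded deletions cannot wipe them all out. The Chernoff/expansion/P\'osa part of your sketch is standard and fine, but without that adversary-uniform booster count your proposal does not close.
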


\section{Good structures in $G_{n,p}$}\label{sec:structures}

\subsection{Structures $\mathcal{S}_k$}\label{sec:structures_notation}

During the gameplay, according to the strategy in Section~\ref{sec:strategy}, Walker is often confronted with the following situation: her current position is some vertex $a\in V(G)$ and, using some potential function argument, she decides for a vertex $x\in V(G)$ that she wants to reach next. In order to guarantee that Walker can indeed do so, we make use of copies of a \textit{good structure} $\mathcal{S}_k$ which is defined in the following. Later, Lemma~\ref{lemma:technical} is used to show that Walker can always find suitable copies of that structure in $G$ and hence succeed with her strategy, even if Breaker has already occupied many edges in the game. Before we state this lemma, we however need to introduce some appropriate definitions and collect some basic properties of $\mathcal{S}_k$. 

\begin{dfn}[Good structures]
	Given any positive integer $k$, let $\cT_k$ denote a perfect $3$-ary tree $\cT_k$ of depth $k$. Starting from $\mathcal{T}_k$, the good structure $\cS_k$ is created as follows: subdivide every edge of $\mathcal{T}_k$ with one vertex, and afterwards identify all of its leaves to a single vertex.
\end{dfn}

The left side of Figure~\ref{fig:structure.children} shows $\mathcal{S}_3$.
Whenever we want to refer to the graphs $\mathcal{T}_k$ and $\mathcal{S}_k$, we make use of the following labelling. For the graph $\mathcal{T}_k$, the root is denoted by $s_{k,1}$ and for every $\ell\in [k]$ and $i\in [3^{k-\ell}]$, we let the three children of $s_{\ell,i}$ be $s_{\ell-1,3i-2}$, $s_{\ell-1,3i-1}$ and $s_{\ell-1,3i}$. 
For $\mathcal{S}_k$ we keep this labelling, and we set $s_0:=s_{0,i}$ for every $i\in [3^k]$ to be the identifying vertex. When talking about the subdividing vertices, we write $s^\ast_{\ell-1,3i-j}$ for the middle vertex of the path between vertices $s_{\ell,i}$ and $s_{\ell-1,3i-j}$, for every $\ell\in [k]$, $i\in [3^{k-\ell}]$ and $j\in \{0,1,2\}$. See also the right side of Figure~\ref{fig:structure.children}.

Moreover, in light of the strategy described in Lemma~\ref{lemma:structure_strategy}, we sometimes say that
$\mathcal{S}_k$ \textit{starts} in $s_{k,1}$ and \textit{ends} in $s_0$.
Furthermore, for every $\ell\in [k]$ we call $L_{\ell}:=\{s_{\ell,i}:i\in [3^{k-\ell}]\}$ the \textit{main level} $\ell$, and we use
$I(k)=\{ (\ell,i):~ \ell\in [k], i\in [3^{k-\ell}]\}$ for the set of indices over all vertices belonging to main levels.
Similarly, for every $\ell\in \{0\}\cup [k-1]$ we call $L_{\ell}^\ast:=\{s_{\ell,i}^\ast:i\in [3^{k-\ell}]\}$ the \textit{secondary level} $\ell$, and we use $I^\ast(k)=\{ (\ell,i):~ \ell\in [k]\cup\{0\}, i\in [3^{k-\ell}]\}$ for the set of indices over all vertices belonging to secondary levels. Note that the following holds.

\begin{figure}
	\centering
	\includegraphics[scale=0.8,page=2]{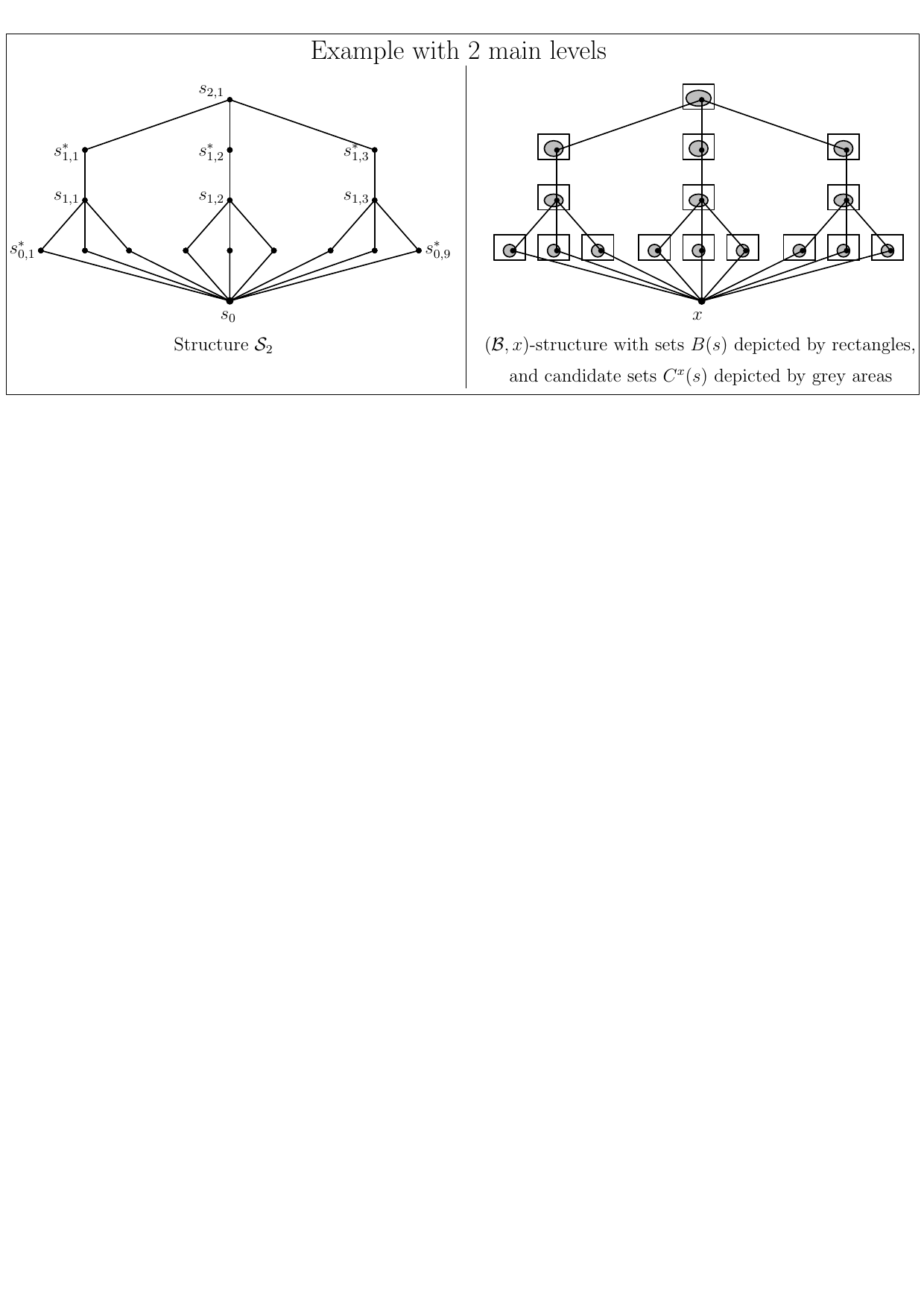}
	\caption{The left part of the picture shows the structure $\mathcal{S}_3$, while the right part depicts the notation of vertices in a small subtree between main levels $\ell$ and $\ell-1$.}
	\label{fig:structure.children}
\end{figure}

\begin{clm}\label{claim:structure_size}
	For every positive integer $k$ we have
	$v(\mathcal{S}_k) = 2\cdot 3^{k}- 1$, $ 
	e(\mathcal{S}_k) = 3^{k+1} - 3,$
	and	the tree $\mathcal{S}_k - s_0$ has $3^{k}$ leaves.
\end{clm}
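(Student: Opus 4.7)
The plan is to track the three quantities through the two-step construction of $\mathcal{S}_k$ from $\mathcal{T}_k$ (subdivision of edges, then identification of leaves) and read off the required counts. No induction is needed; all three statements follow from elementary arithmetic once the construction is analyzed carefully.

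First I would record the standard data for the perfect ternary tree $\mathcal{T}_k$ of depth $k$: since the main level $\ell$ contains $3^{k-\ell}$ vertices for $\ell\in\{0,1,\ldots,k\}$, one has
\[
v(\mathcal{T}_k)=\sum_{\ell=0}^{k}3^{k-\ell}=\tfrac{3^{k+1}-1}{2}, \qquad e(\mathcal{T}_k)=v(\mathcal{T}_k)-1=\tfrac{3^{k+1}-3}{2},
\]
and $\mathcal{T}_k$ has exactly $3^k$ leaves (namely the vertices at main level $0$). Subdividing each edge adds one new vertex per edge and doubles the edge count, producing a tree $\mathcal{T}_k'$ with $v(\mathcal{T}_k')=\tfrac{3^{k+1}-1}{2}+\tfrac{3^{k+1}-3}{2}=3^{k+1}-2$ vertices and $e(\mathcal{T}_k')=3^{k+1}-3$ edges. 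The $3^k$ leaves of $\mathcal{T}_k$ remain the leaves of $\mathcal{T}_k'$, and each of them has a distinct unique neighbor (the corresponding subdivision vertex $s^{\ast}_{0,i}$). Hence identifying these leaves into the single vertex $s_0$ introduces no multi-edges: the edge count is unchanged and the vertex count decreases by $3^k-1$, giving
\[
v(\mathcal{S}_k)=3^{k+1}-2-(3^k-1)=2\cdot 3^k-1, \qquad e(\mathcal{S}_k)=3^{k+1}-3,
\]
as required.

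For the last assertion I would observe that $\mathcal{S}_k-s_0$ is obtained from the subdivided tree $\mathcal{T}_k'$ by deleting its $3^k$ leaves, so in particular $\mathcal{S}_k-s_0$ is itself a tree. The only vertices whose degree in $\mathcal{S}_k$ is affected by removing $s_0$ are its neighbors, i.e.~the $3^k$ secondary-level-$0$ vertices $s^{\ast}_{0,i}$ with $i\in[3^k]$; each such vertex has degree $2$ in $\mathcal{S}_k$ (being a subdivision vertex) and therefore degree $1$ in $\mathcal{S}_k-s_0$. Every other vertex of $\mathcal{S}_k-s_0$ retains its original degree, which is either $2$ (other subdivision vertices), $3$ (the root $s_{k,1}$), or $4$ (any internal main-level vertex $s_{\ell,i}$ with $\ell\in[k-1]$). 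Thus the set of leaves of $\mathcal{S}_k-s_0$ is precisely $L_0^{\ast}$, of size $3^k$.

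There is no real obstacle here; the only point that needs a moment of attention is the fact that identification of the leaves does not create parallel edges, which is immediate because distinct leaves of $\mathcal{T}_k'$ have distinct unique neighbors.
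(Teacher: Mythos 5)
Your proof is correct. It takes a mildly different route from the paper's: you count through the construction pipeline ($\mathcal{T}_k \to$ subdivide $\to$ identify leaves), carefully noting that the identification step creates no parallel edges because distinct leaves of the subdivided tree have distinct unique neighbors, whereas the paper directly sums $|L_\ell|$ and $|L_\ell^\ast|$ over all levels of $\mathcal{S}_k - s_0$ to get $v(\mathcal{S}_k-s_0) = 2\cdot 3^k - 2$ and then recovers the edge count via $e(\mathcal{S}_k) = (v(\mathcal{S}_k-s_0)-1) + |L_0^\ast|$. The paper's route sidesteps the parallel-edge issue entirely, while yours makes it explicit; both are elementary and of essentially the same length, so this is a matter of bookkeeping taste rather than substance.
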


\begin{proof}
	By construction, $\mathcal{S}_k-s_0$ is a tree with
	$|L_0^\ast|=3^k$ leaves and with
	$$
	v(\mathcal{S}_k-s_0) = \sum_{\ell=1}^k |L_{\ell}| + \sum_{\ell=0}^{k-1} |L_{\ell}^\ast|
	= \sum_{\ell=1}^k 3^{k-\ell} + \sum_{\ell=0}^{k-1} 3^{k-\ell}
	= 2\cdot 3^k - 2\, .
	$$
	In particular, $v(\mathcal{S}_k) = 2\cdot 3^k - 1$. The edge set of
	$\mathcal{S}_k$ consists of all edges of the tree $\mathcal{S}_k-s_0$ and all edges between $s_0$ and $L_0^\ast$. Hence,
	$
	e(\mathcal{S}_k) = v(\mathcal{S}_k-s_0) - 1 + |L_0^\ast|
	= 2\cdot 3^k - 2 - 1 + 3^k = 3^{k+1} - 3
	$.
\end{proof}

The next lemma indicates how Walker can use copies of $\mathcal{S}_k$ in her strategy.

\begin{lemma}\label{lemma:structure_strategy}
	Let $k\geq 1$, and consider a $(2:2)$ Walker-Breaker game on the structure $\cS_k$ with Breaker being the first player and Walker's starting position being $s_{k,1}$. Then Walker has a strategy $S_{\text{structure}}$ to reach the vertex $s_0$ within $k$ rounds.
\end{lemma}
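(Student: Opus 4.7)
The plan is to give Walker an explicit greedy strategy $S_{\text{structure}}$ and to verify it by maintaining the following invariant on Breaker's edges inside Walker's current subtree: whenever Walker is about to move in round $t$, she sits at some main-level vertex $v = s_{k-t+1,i}$ and the subtree $T_v \subseteq \cS_k$ of all descendants of $v$ down to $s_0$ contains at most two edges of Breaker. The invariant holds at the start of round $1$, since Breaker has played only his opening two edges and $T_{s_{k,1}} = \cS_k$.

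Given the invariant at round $t$, the three length-$2$ paths going down from $v$ are $v\to s^*_{k-t,3i-j}\to c_j$ with $c_j := s_{k-t,3i-j}$ and $j \in \{0,1,2\}$. The key claim is that Walker can always find an index $j$ such that
\begin{enumerate}[label=(\roman*)]
\item both edges of the length-$2$ path to $c_j$ are free, and
\item the subtree rooted at $c_j$ contains no Breaker edge.
\end{enumerate}
She then uses her two moves in round $t$ to walk from $v$ through $s^*_{k-t,3i-j}$ to $c_j$.

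To justify the key claim I would split into three cases according to where Breaker's at most two edges of $T_v$ lie: (a) both lie among the six ``top'' edges of the three length-$2$ paths, (b) one lies among those top edges and one lies in a sub-subtree rooted at some $c_{j'}$, or (c) both lie below the $c_{j'}$'s. In each case at most two of the three paths are blocked, and at least one of the unblocked paths ends at a $c_j$ whose subtree has not been touched by Breaker, so a valid $j$ exists. The main obstacle in the argument is exactly this bookkeeping step: making sure that (i) and (ii) can be realized by the \emph{same} index $j$, not merely separately, in each sub-case.

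Finally, right after Walker's move she sits at $c_j$ with zero Breaker edges in the subtree rooted there, and Breaker's next two moves in round $t+1$ contribute at most two additional Breaker edges in that subtree, which keeps the invariant alive. Iterating the strategy for $k$ rounds moves Walker through main levels $k, k-1, \dots, 1$ and, at the end of round $k$, brings her to a main-level-$0$ vertex, which by construction of $\cS_k$ is the identified vertex $s_0$. The terminal round needs no modification: the ``subtree below'' a main-level-$0$ vertex is a single point, so condition (ii) is trivially satisfied.
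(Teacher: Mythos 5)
Your proof is correct and takes essentially the same approach as the paper's, which casts the identical idea as an induction on $k$ rather than an invariant maintained round by round. The casework you worry about at the key step collapses to a single pigeonhole: the three branches, each consisting of the length-$2$ path from $v$ to $c_j$ together with the copy of $\cS_{k-t}$ hanging from $c_j$, are pairwise edge-disjoint, so Breaker's at most two edges inside $T_v$ can intersect at most two branches, leaving at least one index $j$ for which (i) and (ii) hold simultaneously.
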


\begin{proof}
	We prove this lemma by induction on $k$. For $k=1$, the structure $\mathcal{S}_k$ consists of three edge-disjoint paths of length 2 between $s_{1,1}$ and $s_0$, and by assumption $s_{1,1}$ is Walker's starting position. Since Breaker can only block two of these paths within the first round, Walker can find a free path with which she reaches $s_0$ within her first move.
	
	For $k>1$ consider the structure $\cS_k$ starting in $s_{k,1}$ and ending in $s_0$. This structure consists of three copies of the structure $\cS_{k-1}$ starting in one of the vertices $s_{k-1,i}$, $i \in [3]$, and ending in $s_0$, and the three paths of the form $s_{k,1}s^*_{k-1,i}s_{k-1,i}$. After Breaker's first move, at least one of the paths $s_{k,1}s^*_{k-1,i}s_{k-1,i}$ as well as the copy of $\cS_{k-1}$ starting from $s_{k-1,i}$ still do not contain any edge claimed by Breaker. Walker claims this path and thus reaches $s_{k-1,i}$ within one move. According to our induction hypothesis Walker can then reach $s_0$ starting from $s_{k-1,i}$ within $k-1$ further moves and thus has a strategy to reach $s_0$ within $k$ moves.
\end{proof}

\subsection{Finding copies of $\mathcal{S}_k$ in $G_{n,p}$}

By Claim~\ref{claim:structure_size} the density
of $\mathcal{S}_k$ is slightly below $3/2$, and hence, for $p\geq n^{-2/3}$ we know that with high probability a random graph
$G\sim G_{n,p}$ contains copies of $\mathcal{S}_k$. (See e.g.~\cite{Bollobas} for subgraph containment in $G_{n,p}$.) Moreover, increasing slightly to $p \geq n^{-2/3+\eps}$ and having that $k=k(\eps)$ is a sufficiently large constant, it can be verified that in expectation
for any two vertices $v,x\in V(G)$ there is a copy of $\mathcal{S}_k$ such that $v$ is the copy of $s_{k,1}$ and $x$ is the copy of $s_0$. Now, throughout the game, Walker aims to find such copies which additionally are free of Breaker's edges and which can be reached from her current position $a$ within one round. 
In order to guarantee that Walker can indeed find such copies, we want to make sure that there exists a collection of many copies of $\mathcal{S}_k$  with the additional property that any Breaker edge can only belong to a comparatively small number of these copies. This property may help later to ensure that Walker can play without facing a situation in which she wants to add a new vertex $x$ to her graph but cannot find a good structure with which she can still reach $x$.

In order to prove the existence of the desired collections of structures we make use of a recursive approach. As we want to keep independence whenever needed in the probabilistic analysis, we initially split the vertex set of $G\sim G_{n,p}$ into several blocks $B_t(s)$, for every $s\in V(\mathcal{S}_k-s_0)$ and $t\in [2]$, and some left-over $R$; then  we aim for copies of $\mathcal{S}_k$ for which the copy of any vertex $s\in V(\mathcal{S}_k-s_0)$ is an element of one of the corresponding blocks $B_t(s)$. In order to make our statements precise, we use the following definitions.

\begin{dfn}\label{dfn:structures.appearance}
	Let a positive integer $k$, graph $G$, and a family 
	$\cB:=\left\{ B(s):~ s\in V(\cS_k-s_0) \right\}$
	of pairwise disjoint subsets of $V(G)$ be given. Furthermore, let $x\in V(G)$, $e\in E(G)$, $v\in B(s_{k,1})$, and $\ell\in [k]$. Then we define the following:
	\begin{itemize}
		\item A copy $S$ of $\cS_k$ in $G$ is called a \textit{$(\cB,x)$-structure}
		if $x$ is the copy of $s_0$ in $S$ and if for every $s\in V(\cS_k-s_0)$ there is a vertex in $B(s)$ which is the copy of $s$ in $S$.
		\item We say that $e$ \textit{sees the vertex} $x$ with respect to $\cB$ if there exists a $(\cB,x)$-structure containing $e$.
		\item We say that $e$ \textit{is relevant for} $v$ with respect to $(\cB,x)$ if there exists a $(\cB,x)$-structure containing both $v$ and $e$.
		\item We say that $e$ \textit{appears between levels $\ell-1$ and $\ell$} with respect to $\mathcal{B}$ if there exists a vertex $s\in L_{\ell -1}^\ast$ such that $e$ is incident with a vertex in $B(s)$, and if $\ell$ is the smallest integer with this property.
		\item We say that $e$ \textit{appears below level $\ell$} with respect to $\mathcal{B}$ if for some $\ell'\leq \ell$ the edge $e$ 
		appears between levels $\ell'-1$ and $\ell'$.
	\end{itemize}
\end{dfn}

Now, we can state our main technical lemma which the strategy of Walker builds on. In order to avoid rounding signs in its proof we may restrict the possible choices of $\eps$. More precisely, we let $\cR$ be the set of real numbers $\eps\in (0,1)$ such that $\log_3(2\eps^{-1}+12)-2$ is a positive integer. Note that, by monotonicity and since $\inf(\cR)=0$, it is enough to prove Theorem~\ref{thm:main} for $\eps\in\cR$.

\begin{figure}
	\centering
	\includegraphics[scale=0.8,page=3]{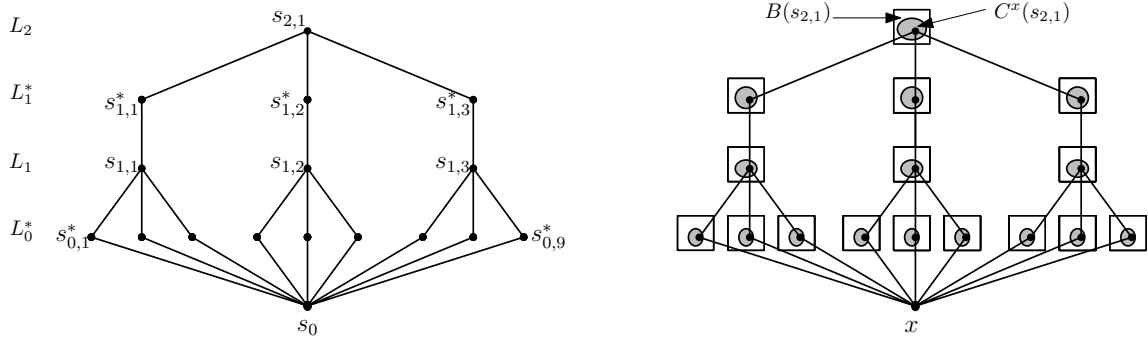}
	\caption{The left picture shows the structure $\mathcal{S}_2$. The right picture shows a $(\mathcal{B},x)$-structure with respect to blocks $B(s)$ depicted by rectangles, and candidate sets $C^x(s)$ depicted by grey areas.}
	\label{fig:boxes.candidates}
\end{figure}

\begin{lemma}[Main Technical Lemma]\label{lemma:technical}
	For every $\eps\in \cR$ there exists a positive integer $k$ such that the following holds. Let the structure $\cS_k$ be given with the labelling from Subsection~\ref{sec:structures_notation}.
	Further, let $p\geq n^{-2/3+\eps}$ and $G\sim G_{n,p}$. Then a.a.s.~there exist a vertex $a\in V(G)$, 
	a partition $V(G)\setminus \{a\}=V_1\cup V_2$,
	families $\mathcal{B}_t=\left\{B_t(s):~ s\in V(\cS_k-s_0)\right\}$ of pairwise disjoint subsets of $V_{t}$ for every $t\in [2]$, 
	and a set $R\subseteq V\setminus (\bigcup_{s\in V(\mathcal{S}_k-s_0)} (B_1(s)\cup B_2(s)) \cup \{a\})$ such that the following properties hold:
	\begin{enumerate}
		\item[(S)] \underline{Sizes:} $|R|\geq \frac{n}{2}$, $|B_t(s)|=\frac{n}{3^{k+10}}$ for every $s\in V(\mathcal{S}_k-s_0)$ and $t\in [2]$,
		and $|N(a,R)|\geq n^{1/3+\eps/2}$.
		\item[(C)] \underline{Candidate sets:} Let $t\in [2]$. For all $x\in V_{3-t}$ there exist candidate sets
		$C^x(s)\subseteq B_t(s)$ for every $s\in V(\cS_k-s_0)$ such that:
		
		\smallskip
		
		\begin{enumerate}
			\item[(C1)] \underline{Number of candidates:} $|C^x(s)|=n^{(3^{\ell+1}-3)\eps} \ln^{\pm 3^{3\ell}}(n)$ for every $\ell\in [k]$ and $s\in L_{\ell}$. 
			\item[(C2)] \underline{Neighbourhoods for main levels:} For every $(\ell,i)\in I(k)$, every vertex $v\in C^x(s_{\ell,i})$ has a neighbour in each of the sets $C^x(s^\ast_{\ell-1,j})$ with $3i-2\leq j\leq 3i$.
			\item[(C3)] \underline{Neighbourhoods for secondary levels:} For every $(\ell,i)\in I^\ast(k)$, every vertex $v\in C^x(s^\ast_{\ell,i})$ has a neighbour in $C^x(s_{\ell,i})$, where we set $C^x(s_{0,i})=\{x\}$.
		\end{enumerate}
		
		\noindent
		Set $C^x:=C^x(s_{k,1})$ from now on.
		
		\smallskip
		
		\item[(E)] \underline{Edge appearances:} 
		Let $t\in [2]$. For every edge $e\in E(G)$ the following holds:
		\begin{enumerate}
			\item[(E1)] If $e$ appears below level $k-1$ with respect to $\mathcal{B}_t$, then $e$ sees at most $\ln^2(n)$ vertices $x\in V_{3-t}$ with respect to $\cB_t$.
			\item[(E2)] If $e$ appears between levels $k-1$ and $k$ with respect to $\mathcal{B}_t$, then $e$ sees at most $n^{1/3+0.1\eps}$ vertices $x\in V_{3-t}$ with respect to $\cB_t$.
		\end{enumerate}

		\item[(R)] \underline{Relevance of edges:} Let $t\in [2]$ and $x\in V_{3-t}$. For any set $Z_1$ of edges appearing below level $k-1$ with respect to $\mathcal{B}_t$,
		and any set $Z_2$ of edges appearing between levels $k-1$ and $k$ with respect to $\mathcal{B}_t$, let $C^x[Z_1,Z_2]$
		denote the vertices in $C^x$ for which no edge of $Z_1\cup Z_2$ is relevant with respect to $(\cB_t,x)$. Then the following holds:
		If $|Z_1|=\ln^4(n)$ and $|Z_2|=n^{1/3+\eps/2}$, and if $A\subseteq N(a,R)$ has size $|A|=n^{1/3}$, then
		$
		e_{G}\big(A,C^x[Z_1,Z_2]\big) \geq n^{1/3+1.5\eps}\, . 
		$
	\end{enumerate}
\end{lemma}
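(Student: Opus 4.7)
My plan is to construct $(a,V_1,V_2,\cB_1,\cB_2,R)$ by a random block-partition of $V(G)\setminus\{a\}$ and then to verify (S), (C), (E), (R) via Chernoff-type concentration combined with first-moment counts of copies of $\cS_k$ in $G$.

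First I would fix any $a\in V(G)$, draw a uniform partition $V(G)\setminus\{a\}=V_1\cup V_2$, and inside each $V_t$ carve out pairwise disjoint blocks $B_t(s)$ of size $n/3^{k+10}$ for $s\in V(\cS_k-s_0)$, placing the remainder in $R$. Since $k=k(\eps)=O(1)$, only $O(n/3^{10})$ vertices are taken by blocks, so $|R|\geq n/2$ holds deterministically; applying Chernoff (Lemma~\ref{lem:Chernoff1}) to $|N(a)|\sim\operatorname{Bin}(n-1,p)$ and using $|V(G)\setminus R|=o(np)$ gives $|N(a,R)|\geq n^{1/3+\eps/2}$ a.a.s., settling (S). For (C), I would build the candidate sets bottom-up by setting $C^x(s_{0,j}):=\{x\}$ and then
\[
C^x(s^\ast_{\ell-1,j}):=\{v\in B_t(s^\ast_{\ell-1,j}):~ N(v)\cap C^x(s_{\ell-1,j})\neq\emptyset\},
\]
\[
C^x(s_{\ell,i}):=\{v\in B_t(s_{\ell,i}):~ v\text{ has a neighbour in each }C^x(s^\ast_{\ell-1,3i-j}),\ j\in\{0,1,2\}\}.
\]
Then (C2) and (C3) are immediate, and only (C1) is non-trivial. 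Because the blocks are pairwise disjoint, the random edges that decide membership at level $\ell$ are independent of the ones used at lower levels, so a layered Chernoff applies. The recursion $\mathbb{E}|C^x(s^\ast_{\ell-1,j})|\asymp|B|p|C^x(s_{\ell-1,j})|$ and $\mathbb{E}|C^x(s_{\ell,i})|\asymp|B|(p|C^x(s^\ast_{\ell-1,\cdot})|)^3$ iterates to $n^{(3^{\ell+1}-3)\eps}$; the polylogarithmic slack $\ln^{\pm 3^{3\ell}}(n)$ is sized precisely to absorb the Chernoff fluctuation at level $\ell$ together with the cubing of the lower-level error from the three independent main-level children, and a union bound over $x\in V_{3-t}$ and the $O(3^k)$ positions $s$ is cheap because every Chernoff tail is $\exp(-n^{\Omega(\eps)})$.

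For (E1) and (E2), the central quantity is the first moment $\mathbb{E}[\#\{(\cB_t,x)\text{-structures through }e\}]$ summed over $x\in V_{3-t}$. With $|B|=n/3^{k+10}$, $p=n^{-2/3+\eps}$ and the choice of $k$ coming from $\cR$ (which enforces $\eps(3^{k+1}-4)=\tfrac{2}{3}$), this sum evaluates to $n^{1/3\pm O(\eps)}$; combined with a Markov/Janson-type concentration and a union bound over the $\Theta(n^2p)$ edges, (E2) follows. For (E1) the edge sits strictly below the root, so the copy must also contain the two sibling $\cS_{k-1}$-type branches hanging off the ancestor path of $e$ to the root, whose $2\cdot 3^{k-1}$ secondary-level-$0$ leaves must all attach to $x$; reinserting that constraint into the first moment shaves the per-$x$ expectation by $n^{-\Omega(1)}$ and collapses the bound into the polylogarithmic range, yielding (E1). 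Property (R) then follows by a dual first-moment argument: for fixed $x$, a root $v\in C^x\setminus C^x[Z_1,Z_2]$ forces some $e\in Z_1\cup Z_2$ to sit inside a $(\cB_t,x)$-copy rooted at $v$, so Markov on the per-edge-per-$x$ copy count (using the sharper (E1) estimate for $Z_1$ and (E2) for $Z_2$) gives $|C^x\setminus C^x[Z_1,Z_2]|=o(|C^x|)$ and hence $|C^x[Z_1,Z_2]|\asymp|C^x|\asymp n^{2/3+\eps}$. A final Chernoff for $e_G(A,C^x[Z_1,Z_2])\sim|A||C^x[Z_1,Z_2]|p\asymp n^{1/3+2\eps}$ then secures (R) with margin.

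The hardest part will be the sharp (E1) estimate together with threading the precision of (C1) through the whole cascade: the polylogarithmic slack in (C1) must survive the polynomial union bounds required in (E) and (R), and the edge count for (E1) has to exploit the high multiplicity of routes from the root of $\cS_k$ to $x$ (rather than naive Markov) to reach the polylogarithmic target.
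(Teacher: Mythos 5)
Your high-level plan (fix $a$, carve out blocks of size $n/3^{k+10}$, build candidate sets bottom-up with a layered Chernoff, and count copies of $\cS_k$ through a fixed edge) matches the paper's skeleton, and your parameter identity $\eps(3^{k+1}-4)=2/3$ is exactly the one the paper extracts from $\eps\in\cR$. The issue is in the concentration step for (E) and the structure of the argument for (R); these are where the proposal breaks down.

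For (E), you propose to control $X_e:=|\{x\in V_{3-t}: e\text{ sees }x\}|$ by a first-moment computation ``combined with a Markov/Janson-type concentration and a union bound over the $\Theta(n^2p)$ edges.'' Neither tool delivers the tail you need. Markov gives a failure probability of order $\mathbb{E}[X_e]/\text{threshold}$, which for (E2) is $n^{-\Theta(\eps)}$ and for (E1), with $\mathbb{E}[X_e]=o(1)$, at best $o(1)$ --- both far from the $o(n^{-2})$ required to union-bound over all edges. Janson's inequality controls \emph{lower} tails, not the upper tails you need; an upper-tail analogue (Kim--Vu, Janson--Ruci\'{n}ski) is a genuinely heavier hammer and is not what the paper uses. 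The paper's actual mechanism is a two-stage exposure: first expose $E(V(\cB_t))$ and, conditioned on the (a.a.s.) maximum-degree bound $np$, observe that the number of candidate subtrees $S\cong\cS_\ell-s_0$ through $e$ with vertices in the blocks is \emph{deterministically} at most $(np)^{2\cdot 3^\ell-4}$; only then expose the edges to $V_{3-t}$, at which point the events ``$x$ is seen via some $S$'' are independent over $x$ with success probability at most $p^\ast=(np)^{2\cdot 3^\ell-4}p^{3^\ell}$, so $X_e$ is stochastically dominated by $\operatorname{Bin}(|V_{3-t}|,p^\ast)$ and Chernoff gives $\exp(-\ln^2 n)$ (for E1) or $\exp(-n^{1/3})$ (for E2) tails. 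You acknowledge the difficulty at the end (``rather than naive Markov'') but do not supply this idea, and without it the union bound over edges cannot close.

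The gap in (R) is related. You want to bound $|C^x\setminus C^x[Z_1,Z_2]|$ by ``Markov on the per-edge-per-$x$ copy count,'' but the statement must hold simultaneously for \emph{all} choices of $Z_1$ (from $\binom{n^2}{\ln^4 n}$ possibilities) and $Z_2$ (from $\binom{n^2}{n^{1/3+\eps/2}}$ possibilities), as well as all $x$ and all $A$. No expectation bound survives that union bound. The paper sidesteps this by first proving a high-probability bound that is quantified only over $(t,x,v)$ (its Claim 5.8): a.a.s.\ for every $x$ and every fixed $v$ in a secondary block, the number of candidates in $B_t(s_{k,1})$ belonging to a $(\cB_t,x)$-structure through $v$ is at most $c_\ell$, which is proven by re-running the candidate-set recursion with one block pinned to $\{v\}$. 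Conditioned on this event (whose union bound is only over $O(n^2)$ pairs), the bound $|C^x\setminus C^x[Z_1,Z_2]|\leq |Z_1|c_\ell + |Z_2|c_{k-1} = o(|C^x|)$ is deterministic for every $Z_1,Z_2$, and only the final edge-count $e_G(A,C^x[Z_1,Z_2])$ needs a Chernoff bound with the big union bound. Your proposal collapses these two layers into one first-moment estimate, which cannot carry the union bound. So the route you describe does not go through; the missing ideas are precisely the binomial-domination via two-stage exposure for (E), and the intermediate per-$(v,x)$ relevance-count claim (proven by a pinned candidate-set recursion) that converts (R) into a deterministic statement before the last Chernoff.
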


We postpone the proof of Lemma~\ref{lemma:technical} to Section~\ref{sec:technical}. Nevertheless, let us briefly give a reason why we care about the above properties. Set $B(s)=B_1(s)\cup B_2(s)$ for every $s\in V(\mathcal{S}_k)-s_0$ and $\mathcal{B}=\{B(s):~s\in V(\mathcal{S}_k)-s_0\}$. Property (C) promises that for every $x\in V(G)\setminus \{a\}$ we can find a collection of $(\mathcal{B},x)$-structures starting in any vertex of $C^x(s_{k,1})$. 

\begin{obs}\label{obs:CgivesSk}
	Let $a\in V(G)$, a partition $V(G)\setminus \{a\}=V_1\cup V_2$,
	families $\mathcal{B}_t$, and a set $R$ be given according to Lemma~\ref{lemma:technical} such that property (C) holds. Then for every $t\in [2]$, $x\in V_{3-t}$, and $v\in C^x(s_{k,1})\cap B_t(s_{k,1})$ there exists a $(\mathcal{B}_t,x)$-structure starting in $v$ and ending in $x$.
\end{obs}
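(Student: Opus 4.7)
The plan is to build an isomorphic copy of $\mathcal{S}_k$ inside $G$ by a straightforward top-down greedy construction that reads the required edges off directly from properties (C2) and (C3). I would start by declaring the image of $s_{k,1}$ to be $v$ itself, which is consistent since $v \in C^x(s_{k,1}) \subseteq B_t(s_{k,1})$, and the image of $s_0$ to be $x$.

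Then I would descend level by level from $\ell = k$ down to $\ell = 1$. Inductively, suppose we have committed to an image $v_{\ell,i} \in C^x(s_{\ell,i})$ for each main-level vertex $s_{\ell,i}$ already handled. By (C2), for each child index $j \in \{3i-2, 3i-1, 3i\}$ the vertex $v_{\ell,i}$ has a neighbour in $C^x(s^\ast_{\ell-1,j})$; pick any such neighbour and call it $v^\ast_{\ell-1,j}$. By (C3) the vertex $v^\ast_{\ell-1,j}$ then has a neighbour in $C^x(s_{\ell-1,j})$; if $\ell - 1 \geq 1$, I would select any such neighbour as $v_{\ell-1,j}$, and if $\ell - 1 = 0$, the convention $C^x(s_{0,j}) = \{x\}$ means that $v^\ast_{0,j}$ is automatically adjacent to $x$ in $G$ and no additional choice is needed.

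Distinctness of all the chosen images is then immediate from property (S) of Lemma \ref{lemma:technical}: every candidate set satisfies $C^x(s) \subseteq B_t(s)$, and the blocks $B_t(s)$ are pairwise disjoint across $s \in V(\mathcal{S}_k - s_0)$; moreover $x \in V_{3-t}$ lies outside $V_t \supseteq B_t(s)$, so $x$ does not coincide with any interior image. All edges of $\mathcal{S}_k$ are accounted for---those between a main-level vertex and its three subdivision children via (C2), and those between a subdivision vertex and its main-level child (or $s_0$) via (C3). Hence the resulting subgraph is the desired $(\mathcal{B}_t, x)$-structure starting at $v$ and ending at $x$.

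I do not anticipate any genuine obstacle here: the observation is designed precisely to repackage the local connectivity information contained in (C2) and (C3) into the structural statement needed in the gameplay analysis. All of the real work has been pushed into Lemma \ref{lemma:technical} itself, whose proof must produce blocks $B_t(s)$ and candidate sets $C^x(s)$ satisfying exactly these intertwined adjacency properties.
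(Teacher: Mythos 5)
Your construction is correct and matches the paper's proof essentially verbatim: both build the structure top-down from $v$ by alternately invoking (C2) and (C3), terminating at $C^x(s_{0,i})=\{x\}$. Your explicit remark on distinctness is a welcome added detail, though it follows from the pairwise disjointness of the blocks $B_t(s)$ (stated in the hypotheses of Lemma~\ref{lemma:technical}, not in property (S)) together with $B_t(s)\subseteq V_t$ and $x\in V_{3-t}$.
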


Indeed, let $x\in V_{3-t}$, then fixing any vertex $v_{k,1}:=v\in C^x(s_{k,1})\cap B_t(s_{k,1})$ we can find a copy of $\mathcal{S}_k$ starting in $v_{k,1}$ and ending in $x$ as follows: By (C2) we know that $v_{k,1}$ has neighbours in each of the sets $C^x(s_{k-1,i}^\ast)\subset B_t(s_{k-1,i}^\ast)$. Picking one such vertex $v_{k-1,i}^\ast$ from each of these sets, we have fixed copies of the vertices $s_{k-1,i}^\ast$ of $\mathcal{S}_k$. Then, by (C3) we know that each $v_{k-1,i}^\ast$ has a neighbour in
$C^x(s_{k-1,i})\subset B_t(s_{k-1,i})$; hence we can pick copies of the vertices $s_{k-1,i}$. This process continues, always switching between (C2) and (C3)
until we reach $C^x(s_{0,i})=\{x\}$ so that $x$ becomes the copy of $s_0$ and a $(\mathcal{B}_t,x)$-structure is found.

Note that (C1) ensures that we have many possible candidates in $C^x(s_{k,1})$ to start with, which in turn means that we have many such structures. 
Additionally, property (E) helps when we want to show that in each move, Breaker can only block a reasonably small number of $(\mathcal{B},x)$-structures. Moreover, property (R) comes in handy  when we want to ensure that Walker, being at her starting position $a$, can always reach some $(\mathcal{B},x)$-structure within one round. More precisely, the sets $Z_1$ and $Z_2$ mentioned in the property represent edges claimed by Breaker, and the set $C^x[Z_1,Z_2]$ represents those candidates in $C^x$
from which Walker can still reach the vertex $x$ by following the strategy from Lemma~\ref{lemma:structure_strategy}. 
Having that $e_{G}\big(A,C^x[Z_1,Z_2]\big)$ is large,
we can ensure that Walker has enough options to reach one of the mentioned candidates within one round. More details are given in Section~\ref{sec:strategy} where we first exhibit a partially randomized strategy for Walker and then prove that it lets Walker win with high probability.

\section{Walker's strategy}\label{sec:strategy}

In this section, we prove Theorem~\ref{thm:main} by showing the following more general statement.

\begin{thm}\label{thm:main.general}
	Let $\eps\in (0,1)$, $p\geq n^{-2/3+\eps}$, and $\mathcal{P}=\mathcal{P}_n$ be a monotone increasing graph property that is $(p,\eps)$-resilient. Then,
	playing a $(2:2)$ Walker-Breaker game on the edges of a random graph $G\sim G_{n,p}$, Walker a.a.s.~has a strategy to occupy a graph with property $\mathcal{P}$.
\end{thm}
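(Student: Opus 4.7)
The plan is to design a strategy for Walker guaranteeing that her final graph $W$ satisfies $d_W(v) \geq (1-\eps)\,d_G(v)$ for every $v \in V(G)$. Taking $G' := G \setminus W$ in the $(p,\eps)$-resilience hypothesis then places $W = G \setminus G' \in \mathcal{P}$ by monotonicity.

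First I would apply Lemma~\ref{lemma:technical} to $G$, obtaining the hub $a$, partition $V_1 \cup V_2$, disjoint block families $\mathcal{B}_1, \mathcal{B}_2$, and residual set $R$ with the listed properties (S), (C), (E), and (R). Walker's strategy would combine three interlocking parts. A \emph{hub-preparation} phase early in the game, during which she claims a collection of at least $n^{1/3}$ edges $au$ with $u \in N(a, R)$; the resulting set $A$ of ``launching points'' is kept intact for the rest of the game. A \emph{navigation} routine: to reach a target $x \in V_{3-t}$, Walker first retraces the edges of her previous mission back to $a$, steps across some $au$ whose endpoint $u \in A$ is adjacent to a vertex $v \in C^x(s_{k,1})$ still supporting a Breaker-free $(\mathcal{B}_t, x)$-structure, and then traverses that copy of $\mathcal{S}_k$ in $k$ rounds via the strategy $S_{\text{structure}}$ from Lemma~\ref{lemma:structure_strategy}, arriving at $x$. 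A \emph{local claiming} phase at $x$: Walker claims many free edges at $x$ through the ``claim, walk back, claim'' pattern before starting the next mission. Target selection across rounds follows a MinBox-style rule (Theorem~\ref{theorem:MinBox}) with one box per vertex consisting of its incident edges, while the specific free edges grabbed during each visit are picked randomly in the spirit of~\cite{F15Gen}, so that locally $W$ behaves like a random subgraph of~$G$.

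The feasibility of each navigation step would hinge on properties (E) and (R) of Lemma~\ref{lemma:technical}. Using (E1) and (E2) to bound, for each Breaker edge, how many targets $x$ it can threaten with respect to $\mathcal{B}_t$, together with Chernoff on Breaker's $O(n^2 p)$ total moves, one argues that for every $x$ the relevant Breaker edges split as $Z_1 \cup Z_2$ with $|Z_1| \leq \ln^4(n)$ and $|Z_2| \leq n^{1/3 + \eps/2}$ throughout the game; property (R) then supplies $e_G(A, C^x[Z_1, Z_2]) \geq n^{1/3 + 1.5\eps}$ viable entry edges, so Walker is never stuck. For the degree conclusion, the MinBox bound $\dang(F) \leq b(\ln n + 1) = O(\ln n)$ at every active vertex, combined with Chernoff-style concentration on the randomized within-visit choices, would upgrade to $d_W(v) \geq (1-\eps)\,d_G(v)$ at every vertex a.a.s.\ by a union bound.

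The hard part will be coordinating the MinBox scheduling with the physical walk restriction: the vertex of currently highest danger can lie anywhere, and every mission carries $O(k)$ moves of ``overhead'' for retracing to $a$ and entering a fresh structure. Since $k = k(\eps)$ is only a constant chosen through Lemma~\ref{lemma:technical}, this overhead would sit comfortably inside the $\Theta(n^2 p)$ total round budget, provided each visit claims substantially more than $k$ edges at the target. The most delicate step is lifting the MinBox danger bound to a per-vertex degree bound for \emph{every} vertex (rather than only on average), especially for the final few edges at each vertex where the set of free choices becomes scarce --- this is precisely the regime in which the randomised substrategy from~\cite{F15Gen} becomes essential, since purely deterministic play would not support the concentration estimates needed for the final union bound.
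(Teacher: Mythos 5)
Your plan's central target, $d_W(v) \geq (1-\eps)\,d_G(v)$ for every vertex $v$, is arithmetically unachievable in a $(2:2)$ game. Walker and Breaker claim edges at the same rate, so Walker ends with at most roughly half of $e(G)$, and hence $\sum_v d_W(v) \leq (1/2 + o(1))\sum_v d_G(v)$. This is incompatible with $d_W(v) \geq (1-\eps)\,d_G(v)$ holding at every vertex for any $\eps < 1/2$, and the theorem must apply in precisely that range in order to recover Theorem~\ref{thm:main} via the Lee--Sudakov resilience result. The difficulty is not scheduling or overhead: the inequality you set out to prove cannot hold.

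The actual content of the~\cite{F15Gen} trick --- which your write-up treats as a concentration device for the last few free edges at each vertex --- is to replace $G$ by a much sparser auxiliary random graph. While playing, Walker tosses a coin with success probability $\ln^{-1}(n)$ on the edges she exposes, generating $H \sim G_{n,q}$ with $q = p\ln^{-1}(n)$, and she attempts to claim an edge of $H$ only at the first coin success per exposure visit, so at most one new edge per visit rather than ``many''. The inequality one then establishes is $d_{W\cap H}(v) \geq (1-\eps)\,d_H(v)$ for every $v$, i.e.~\eqref{eq:resilience_condition}. This is feasible because $d_H(v) \approx d_G(v)/\ln n$ sits comfortably within Walker's half-share budget, while $q$ is still large enough that $\mathcal{P}$ is resilient at parameter $q$; resilience is applied to $H$, not to $G$, and monotonicity of $\mathcal{P}$ lifts the conclusion to $W$. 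A secondary mismatch: in the paper the MinBox game (Theorem~\ref{theorem:MinBox}) schedules only the exposure vertices of Sequence~III, while the spanning step (Sequence~II) is driven by a separate continuous box-game potential (Lemma~\ref{lem:Cboxgame}) on weighted structure-seeing edges; your proposal conflates these two scheduling roles into one.
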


By monotonicity, we can restrict our attention to $\eps\in \mathcal{R}$, meaning that $\log_3(2\eps^{-1}+12)-2$ is a positive integer.
We hence can apply Lemma~\ref{lemma:technical} to obtain some output $k\in\mathbb{N}$. For the graph $G\sim G_{n,p}$, we thus can always condition on the properties promised by Lemma~\ref{lemma:technical} and Claim~\ref{clm:degrees}. That is, before the game starts we fix a vertex $a\in V(G)$, a set $R$, and families $\cB_t$ with $t\in [2]$ as described in the lemma. In particular, we assume that all of the properties (S), (C), (E), and (R) hold, which amongst other things provides us with families of $(\mathcal{B}_t,x)$-structures as mentioned in Observation~\ref{obs:CgivesSk}. Moreover, for notational reasons, we set $B(s)=B_1(s)\cup B_2(s)$ for every $s\in V(\mathcal{S}_k)-s_0$ and $\mathcal{B}=\{B(s):~s\in V(\mathcal{S}_k-s_0)\}$. Whenever needed, we assume that $n$ is large enough.

\medskip

In the following we first describe Walker's strategy which combines a deterministic strategy with randomized moves. More precisely, in
Subsections~\ref{subsec:setup} and~\ref{sec:substrategy} we first describe the overall idea, introduce necessary notation and give a substrategy that is used later on. Then, in Subsection~\ref{subsec:strategy}, we describe the full strategy of Walker.
Finally, in Subsection~\ref{subsec:discussion} we show that
Walker can always follow the described strategy and that, against any strategy of Breaker, Walker a.a.s.~manages to occupy a graph with property $\mathcal{P}$. It then follows that Breaker cannot have a winning strategy, i.e.~a strategy which always prevents Walker from occupying a graph with property $\mathcal{P}$, 
and hence Walker must have a deterministic strategy to win the game (see Zermelo's Lemma, e.g.~\cite{beck2008combinatorial}).

\subsection{Our setup}\label{subsec:setup}
In the strategy which we will make precise in Subsection~\ref{subsec:strategy}, Walker alternates between three different sequences of moves, denoted by Sequence I - III. These different sequences allow us to maintain three different goals: 
\begin{enumerate}
	\item[(I)] occupy suitable paths of length 2 starting at the starting vertex $a$,
	\item[(II)] ensure that for every $x\notin V(W)$ there exist
	$(\mathcal{B},x)$-structures the edges of which are available,
	\item[(III)] occupy a graph which "behaves almost like a random graph" (details below).
\end{enumerate}

We remark at this point that the goals (I) and (II) would be enough to prove that Walker wins the connectivity game. As long as she maintains these goals, the main idea of her strategy is the following: Let $x$ be any vertex that Walker wants to include next to her component. By (II) she may find some $(\mathcal{B},x)$-structure $S_x$, the edges of which are available, and by (I) there may be a path of length 2 with which Walker can reach the top vertex of $S_x$. Once this top vertex is reached, Walker can use the edges of $S_x$ to walk towards $x$ (Lemma~\ref{lemma:structure_strategy}) and hence add it to her component. 

\medskip

Property (I) will be guaranteed by Sequence I of the strategy given in Subsection~\ref{subsec:strategy}. In this, Walker makes sure to create a large star with center $a$, and moreover she makes sure to claim suitable edges starting at the leaves of this star, hence creating paths of length 2. These paths are to be created in such a way that Walker can always reach vertices from certain candidate sets $C^x[Z_1,Z_2]$ as described in (R). More details on this are given by Lemma~\ref{lemma:paths_beck}.

\smallskip

Property (II) will be guaranteed by Sequence II of the strategy given in Subsection~\ref{subsec:strategy}. Roughly speaking, the idea is as follows: For every $x\in V(G)$, having $t\in [2]$ such that $x\in V_{3-t}$, we may consider the set $E_x$ consisting of all edges which see the vertex $x$ with respect to $\cB_t$. Using the Continuous Box Game, Walker then ensures that Breaker does not claim too many elements of $E_x$ as long as $x\notin V(W)$. Hence, we are able to find $(\cB_t,x)$-structures which have not been blocked by Breaker so far. 

\smallskip

Finally, property (III) will be guaranteed by Sequence III of the strategy given in Subsection~\ref{subsec:strategy}. Using that part of the strategy, Walker generates a random graph $H\sim G_{\ln^{-1}(n)}$ while the game is proceeding, i.e.~$H\sim G_{n,q}$ with $q=p\ln^{-1}(n)=\omega(n^{-2/3})$. Using a partially randomized strategy, Walker then ensures that a.a.s.~ 
\begin{equation}\label{eq:resilience_condition}
	d_{W\cap H}(v)\geq (1-\eps)d_H(v)
\end{equation}
holds for every $v\in V(H)$ by the end of the game. Since the desired property $\mathcal{P}$ is $(p,\eps)$-resilient, we then have with high probability that $W\cap H$ must have property $\mathcal{P}$ and thus Walker wins. This part of the strategy is motivated by~\cite{F15Gen} and has similarly been used for Walker-Breaker games in~\cite{clemens2016creating,mikalavcki2022spanning}. In the following we provide a few more details of that strategy adapted to our setting.

\medskip

While the game is going on, Walker tosses a coin on every edge of $G$ independently at random, where the probability of success equals $\ln^{-1}(n)$.
If the coin tossed for an edge $e$ shows a success, Walker adds the edge $e$ to the random graph $H$, and moreover, Walker claims the edge if that is possible.

In order to decide on which edge Walker tosses her coin, she always identifies some exposure vertex $v$ (to be defined later in the strategy description) and makes sure that she can reach the vertex $v$ within a small number of rounds. In order to choose an appropriate exposure vertex,
Walker plays an auxiliary MinBox game in parallel, namely MinBox$(n,4pn,0.5\ln^{-1}(n),16k+28)$. In this simulated $(1:16k+28)$ Maker-Breaker game, in which Walker imagines playing as Maker, we have a box $J_v$ of size $4pn$ for every vertex $v\in V(G)$.

Then, once an exposure vertex $v$ is identified and Walker has reached this vertex, she starts tossing the coin on edges
which are incident with $v$, but only for which she has not tossed a coin  earlier, and she stops when the first success happens.
Of course, it may happen that none of her coin tossing is a success, in which case we say that her move is a failure of type I, and Walker just makes an arbitrary move instead.
It may also happen that Walker has success on an edge which was already claimed by Breaker in an earlier round so that Walker cannot take it. Similarly to the previous case, Walker makes an arbitrary move then and we denote it as failure of type II. 

Analogously to~\cite{F15Gen} our final goal is to prove that with high probability at every vertex $v$ only a relatively small number of failures happen, which then yields \eqref{eq:resilience_condition}. 

For the analysis of such an argument, we say that Walker exposes an edge $e\in E(G)$ if she does a coin tossing on it, and we set $U_v \subseteq N_G(v)$ to be the set of all neighbours $w$ of $v$ for which the edge $vw$ has not been exposed yet. Moreover, we introduce counters $f_I(v)$ and $f_{II}(v)$ in order to keep track on the number of failures of type I and II which involve edges incident with $v$. Initially, we set $f_I(v)=f_{II}(v)=0$ for every vertex $v\in V(G)$. 
In order to ensure that the number of failures of type II does not get too large, we apply Theorem~\ref{theorem:MinBox} for MinBox$(n,4pn,0.5\ln^{-1}(n),16k+28)$. Hereby, $w_B(J_v)$ is made to be related to $d_B(v)$ for every $v\in V(G)$, while $w_M(J_v)$ is made to be related to the number of edges incident with $v$ on which the coin was successful. Initially, $w_B(J_v)=w_M(J_v)=0$ for every $v\in V(G)$. Later we will realize that it is very likely that Walker is done with all coin tosses on the edges incident with a fixed vertex $v$,
before Breaker has claimed too many of these edges. This in turn helps to bound the number of failures of type II.

\subsection{A substrategy (Sequence I)} \label{sec:substrategy}

In this subsection we prepare for Sequence I of our main strategy (Section~\ref{subsec:strategy}) and prove the following lemma.

\begin{lemma}\label{lemma:paths_beck}
	Let $b$ be any positive integer and let $n$ be large enough.
	Assume that a $(2:b)$ Walker-Breaker game on some graph $G$ is in progress and that Walker has already claimed the edges of a star of size $n^{1/3}$, with center $a$ and $A$ being the set of leaves.
	Assume further that there exist (not necessarily disjoint) subsets
	$C_1,\ldots,C_{s}\subseteq V(G)\setminus (A\cup \{a\})$, with $s \leq \exp\left(n^{1/3+\eps}\right)$, such that for each $i\in [s]$
	there exist at least $n^{1/3+1.1\eps}$ available edges between
	$A$ and $C_i$. Then Walker has a strategy $\mathcal{S}_{paths}$ 
	that satisfies the following:
	\begin{itemize}
		\item[(i)] The strategy proceeds in sequences of two moves, which always start and end in the vertex $a$, and in which Walker claims exactly one edge from $E_G(A,\bigcup_{i\in [s]} C_i)$.
		\item[(ii)] The strategy ensures that Walker claims
		at least one element from each of the sets $E_G(A,C_i)$. In particular, as long as Walker plays according to this strategy there must always be an edge in $E_G(A,C_i)$ which is either free or taken by Walker.
	\end{itemize}
\end{lemma}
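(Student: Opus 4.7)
The plan is to reformulate the task as an auxiliary biased Maker-Breaker game and apply Beck's criterion (Theorem~\ref{thm:beck}). Let the auxiliary board $\mathcal{X}$ consist of the free edges of $E_G(A,\bigcup_i C_i)$ at the moment $\mathcal{S}_{\text{paths}}$ begins, and define the auxiliary winning sets $\mathcal{F}_i := \mathcal{X}\cap E_G(A,C_i)$. In this auxiliary game the original Breaker plays the role of Maker, while Walker plays the role of Breaker, with biases $(2b:1)$: in each ``sequence'' of $\mathcal{S}_{\text{paths}}$, which I will design to comprise two rounds of the real $(2:b)$ game, Breaker claims at most $2b$ board edges whereas Walker claims exactly one board edge. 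Saying that Walker wins this auxiliary game is exactly saying that the original Breaker never captures all of any $\mathcal{F}_i$, which gives the ``in particular'' clause of~(ii); and once the finite auxiliary board is exhausted, the remaining non-Breaker edge of each $\mathcal{F}_i$ must belong to Walker, establishing the first assertion of~(ii).

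\textbf{Verifying the Beck bound.} Since at the start of $\mathcal{S}_{\text{paths}}$ Walker owns only edges incident to $a$, every available edge of $E_G(A,C_i)$ is free, so $|\mathcal{F}_i|\geq n^{1/3+1.1\eps}$; we also have $s\leq \exp(n^{1/3+\eps})$. Applying Theorem~\ref{thm:beck} with $a=2b$ and (auxiliary) Breaker bias $1$, it suffices to check
\[
\sum_{i=1}^{s} 2^{-|\mathcal{F}_i|/(2b)}\;\leq\; s\cdot 2^{-n^{1/3+1.1\eps}/(2b)} \;\leq\; \exp\!\left(n^{1/3+\eps}-\tfrac{\ln 2}{2b}\,n^{1/3+1.1\eps}\right) \;<\; \tfrac{1}{2},
\]
which holds for $n$ large enough (with $b$ fixed), since the exponent $1/3+1.1\eps$ exceeds $1/3+\eps$ by the polynomial factor $n^{0.1\eps}$. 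Thus Walker has a winning strategy in the auxiliary game.

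\textbf{Implementation and main obstacle.} To execute one sequence, assume Walker stands at $a$ and the auxiliary strategy hands her a board edge $bc\in \mathcal{F}_i$ with $b\in A$ and $c\in C_i$. She plays the next two rounds of the $(2:b)$ game as follows: in the first round she walks $a\to b$ along her star edge $ab\in W$ and then claims the free edge $bc$, landing at $c$; in the second round she walks $c\to b$ along her newly-claimed $bc$ and then $b\to a$ along $ab$. Every sequence therefore claims exactly one edge of $E_G(A,\bigcup_i C_i)$ and returns Walker to $a$, verifying~(i). The main obstacle is the Beck sum estimate above: the $n^{0.1\eps}$-gap between the hypothesised set size and the logarithm of the number of sets is precisely what forces the exponent to become arbitrarily negative. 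A minor subtlety worth noting is that Breaker's $2b$ moves per sequence need not all land on the auxiliary board, but such off-board moves only help Walker and therefore do not invalidate Beck's bound.
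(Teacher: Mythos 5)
Your proof takes essentially the same route as the paper's: an auxiliary $(2b:1)$ Maker--Breaker game on the board of edges between $A$ and $\bigcup_i C_i$, with Walker in the role of Breaker, verified by Beck's criterion (Theorem~\ref{thm:beck}), and implemented by walking $a\to b\to c$ along the star edge plus the new edge and retracing. Both the Beck computation and the two-move implementation match what is in the paper; you also correctly note that Breaker's off-board moves only help Walker, a point the paper leaves implicit.

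One small caveat: you justify $|\mathcal{F}_i|\geq n^{1/3+1.1\eps}$ by asserting that ``at the start of $\mathcal{S}_{\text{paths}}$ Walker owns only edges incident to $a$.'' The lemma's hypothesis does not say Walker owns \emph{only} the star; it only says she owns the star, and ``available'' means free or Walker's. The intended reading is harmless --- if Walker already owns an edge of $E_G(A,C_i)$, that set is already satisfied and can be dropped from $\mathcal{F}$, while for the remaining $i$ all available edges are indeed free --- but stating that one-line reduction explicitly would make the argument airtight. The paper's own computation uses $e_G(A,C_i)$ (total edges, not free ones) without addressing the possibility that some are pre-claimed, so your version is at least as careful.
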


\begin{proof}
	Assume that at any moment in the game Walker's position is the vertex $a$ and she wants to claim some free edge $e=xy\in E_G(A,\bigcup_{i\in [s]} C_i)$, say with $x\in A$. Since, by assumption of the lemma, Walker has already claimed the edge $ax$,
	she can play a sequence of two moves as follows: first go from $a$ to $y$ via $x$, and secondly return to $a$ by using the same edges. If Walker continues playing like this, her strategy already satisfies (i). Moreover, as Walker can claim one free edge of 
	$E_G(A,\bigcup_{i\in [s]} C_i)$ arbitrarily while Breaker claims at most $2b$ edges in the meantime, Walker can imagine playing an auxiliary $(2b:1)$ Maker-Breaker game on the board $E_G(A,\bigcup_{i\in [s]} C_i)$ with the family of winning sets
	$\mathcal{F}=\{E(A,C_i) : i \in [s]\}$, but taking over the role of Breaker since for (ii) she wants to occupy an edge in each of the winning sets. In order to show that Walker can succeed in doing so, it is enough to check the Beck's Criterion (Theorem~\ref{thm:beck}):
	$$
	\sum_{F\in\mathcal{F}} 2^{-|F|/(2b)} =
	\sum_{i \in [s]} 2^{-e_G(A,C_i)/2b} \leq e^{n^{1/3+\varepsilon}} 2^{-n^{1/3+1.1\varepsilon}/2b} =o(1) \, .
	$$
	This proves the lemma.
\end{proof}

\subsection{Strategy description}\label{subsec:strategy}
Throughout the gameplay, Walker alternates between the following sequences of moves which always start and end in the vertex $a$. To keep track of her star with center $a$ (Sequence I), we let $N_a$ denote a set of vertices $w\in N(a,R)$ for which $aw$ is already a Walker's edge. Initially $N_a=\varnothing$. Moreover, once $|N_a|=n^{1/3}$ holds,
the set is never updated again (even if Walker claims another edge incident with $a$). 

In order to apply the Continuous Box Game (Sequence II),
we define weights for the edges and vertices of $G$ as follows.
For every edge $e\in E(G)$ and $t\in [2]$ we define
\begin{align*}
	\weight_t(e) :=
	\begin{cases}
		\ln^{-2}(n) & ~ \text{if $e$ is an edge below level $k-1$ w.r.t.~$\mathcal{B}_t$} \\
		n^{-1/3-0.1\eps} & ~ \text{if $e$ is an edge between level $k-1$ and $k$ w.r.t.~$\mathcal{B}_t$, and} \\
		0 & ~ \text{otherwise}\, .
	\end{cases}
\end{align*}
Moreover, for every $x\in V(G)\setminus \{a\}$ and $t\in [2]$ such that $x\in V_{3-t}$, we dynamically define
\begin{align*}
	\weight(x) := \sum_{e\in B\text{ sees }x \atop \text{w.r.t.~$\cB_t$}} \weight_t(e)\, 
\end{align*}
where the sum is taken over all edges $e$ that see $x$ with respect to $\cB_t$ and which have been claimed by Breaker already. Initially $\weight(x)=0$ for every $x\in V(G)\setminus \{a\}$.

Finally, in light of property (R) from Lemma~\ref{lemma:technical}, we let $C^x[Z_1,Z_2]$ be the set of vertices in $C^x$ for which no edge of $Z_1$ or $Z_2$  is relevant with respect to $(\cB_t,x)$. Moreover, we set 
$$
\goodx := 
\left\{
(Z_1,Z_2):~
\begin{array}{l}
	\text{$Z_1$ consists of edges below level $k-1$ w.r.t.~$\cB_t$,}\\
	\text{$Z_2$ consists of edges between level $k-1$ and $k$ w.r.t.~$\cB_t$,}\\
	|Z_1|=\ln^4(n), ~ \text{ and } ~ |Z_2|=n^{1/3+\eps/2}
\end{array}
\right\}.
$$ 

We are now ready to give the full description of Walker's strategy.

\medskip

{\bf Sequence I:} 
If $|N_a| = n^{1/3}$ and Walker has an edge between $N_a$ and any set $C^x[Z_1,Z_2]$ with $(Z_1,Z_2)\in \goodx$, she proceeds with Sequence II immediately.
Otherwise, she distinguishes two cases.

\medskip

\underline{Case 1.:} Let $|N_a|<n^{1/3}$. Then Walker proceeds as follows:
\begin{itemize}
	\item[(i)] Walker identifies a vertex $w\in N(a,R)$ such that $aw$ is free.
	\item[(ii)] She walks from $a$ to $w$ (hence claiming $aw$) and back to $a$.
\end{itemize}
$N_a$ is updated by adding $w$ to this set. Walker continues with Sequence II afterwards.

\medskip

\underline{Case 2.:} Let $|N_a|= n^{1/3}$. Walker then makes a sequence of two moves according to strategy $\mathcal{S}_{paths}$ with bias $8k+14$, with $A=N_a$ and $C_1,\ldots,C_s$ being replaced with all sets $C^x[Z_1,Z_2]$ such that $(Z_1,Z_2)\in \goodx$. Afterwards she proceeds with Sequence II.

\medskip

{\bf Sequence II:} If Walker's graph is already spanning, Walker 
proceeds with Sequence III immediately. Otherwise, Walker plays as follows:
Let $x$ be a vertex with $x\notin V(W)$ for which $\weight(x)$ is largest. Walker includes $x$ to her graph by the following steps:
Let $t\in [2]$ be such that $x\in V_{3-t}$.

\begin{itemize}
	\item[(i)] Walker identifies a vertex $v\in C^x(s_{k,1})$ such that the following properties hold:
	\begin{itemize}
		\item there is a path $P_v$ of length 2 between $a$ and $v$, the edges of which are available, 
		\item there is a $(\cB_t,x)$-structure $\cS_{v,x}$ starting in $v$, the edges of which are available.
	\end{itemize}
	The existence of such a vertex is proven later in the strategy discussion.
	
	\item[(ii)] For her first move, Walker walks along $P_v$ to reach the vertex $v$.
	\item[(iii)] For her next $k$ moves, Walker follows strategy $S_{\text{structure}}$ on the structure $\cS_{v,x}$ until she reaches $x$.
	\item[(iv)] Finally, Walker takes at most $k+1$ further moves to return to vertex $a$ and then proceeds to Sequence III.
\end{itemize}

\medskip

{\bf Sequence III:} Let $b:=8k+14$. If Walker has already tossed a coin on every edge, she stops playing. Otherwise she does the following.
Before doing any move, Walker makes an update to the simulated game MinBox$(n,4pn,0.5\ln^{-1}(n),2b)$:
Let $e_1,\ldots,e_s$ be the edges that Breaker claimed since the last time
when Walker identified an exposure vertex. Then, for each vertex $v\in V(G)$
increase $w_B(J_v)$ by the cardinality of $\{i\leq s:~ v\in e_i\}$. Walker now plays as follows:

\smallskip

\underline{Case 1:} After the update, let there be a free active box in MinBox$(n,4pn,0.5\ln^{-1}(n),2b)$.
Walker first chooses an exposure vertex $x\in V(G)$ for which $J_x$ is a free active
box in MinBox$(n,4pn,0.5\ln^{-1}(n),2b)$ and such that
$$
\dang(J_x):=w_B(J_x)-2b\cdot w_M(J_x)
$$
is largest. She then increases $w_M(J_x)$ by one in the simulated MinBox game (i.e.~in the MinBox game she imagines to claim an element in the box $J_x$), and afterwards plays on $G$ as follows:
Let $t\in [2]$ be such that $x\in V_{3-t}$.
\begin{itemize}
	\item[(i)] Walker identifies a vertex $v\in C^x(s_{k,1})$ such that the following properties hold:
	\begin{itemize}
		\item there is a path $P_v$ of length 2 between $a$ and $v$, the edges of which are available, 
		\item there is a $(\cB_t,x)$-structure $\cS_{v,x}$ starting in $v$, the edges of which are available.
	\end{itemize}
	The existence of such a vertex is proven later in the strategy discussion.
	
	\item[(ii)] For her first move, Walker walks along $P_v$ to reach the vertex $v$.
	\item[(iii)] For her next $k$ moves, Walker follows strategy $S_{\text{structure}}$ on the structure $\cS_{v,x}$ until she reaches $x$.
	
	\item[(iv)] Having that Walker's position is the exposure vertex $x$, Walker starts with the exposure process:
	
	For this, she first fixes a random ordering
	$\pi: [|U_x|] \rightarrow U_x$ of the vertices in $U_x$. According to that ordering,
	she then tosses her coin on the vertices of $U_x$, independently at random, such that for each coin toss the probability of success equals $\ln^{-1}(n)$. Moreover, she tosses her coin until either the first success happens or until she tossed the coin on every element of $U_x$ without any success. According to these different outcomes, Walker considers the following subcases in her strategy.
	
	\begin{itemize}
		\item \textit{No success (failure of type I):} If none of the coin tosses happens to be a success, Walker declares this exposure round as a failure of type I. In particular, she increases the counter $f_I(x)$ by 1,
		and makes an arbitrary move in which she ends in the vertex $x$ again.
		In the auxiliary game MinBox$(n,4pn,0.5\ln^{-1}(n),2b)$ she imagines to receive $2pn\ln^{-1}(n)$ elements of the box $J_x$ (or all of the remaining free elements of $J_x$ if there are less than $2pn\ln^{-1}(n)$), so that $J_x$ is no longer a free active box.
		Moreover, in the Walker-Breaker game on $G$, she sets $U_x=\varnothing$ and removes $x$ from every set $U_w$ with $w\neq x$.
		
		\item \textit{Success without failure:} Assume that for some $k\in\mathbb{N}$, the vertex $\pi(k)$ is the first vertex in $U_x$ for which the coin toss is a success, and assume further that
		$x\pi(k)$ is still a free edge. Then Walker walks along this edge in both directions and hence claims $x\pi(k)$. Afterwards, the following updates happen: for every $i \in [k]$, the vertex $x$ gets removed from $U_{\pi(i)}$ and the vertex $\pi(i)$ gets removed from $U_x$.
		Additionally, in the MinBox game, Maker claims an arbitrary free element of the box $J_{\pi(k)}$, i.e.~$w_M(J_{\pi(k)})$ is increased by one.
		
		\item \textit{Success with failure (failure of type II):} Assume that for some $k\in\mathbb{N}$, the vertex $\pi(k)$ is the first vertex in $U_x$ for which the coin toss is a success, and assume further that $x\pi(k)$ is not a free edge any more.
		Then Walker makes an arbitrary move in which she ends in $x$ again, and she declares this exposure round as a failure of type II. Afterwards, the following updates happen:
		Walker increases both counters $f_{II}(x)$ and $f_{II}(\pi(k))$ by 1. For every $i \in [k]$, the vertex $x$ gets removed from $U_{\pi(i)}$ and the vertex $\pi(i)$ gets removed from $U_x$.
		Additionally, in the MinBox game, Maker claims an arbitrary free element of the box $J_{\pi(k)}$, i.e.~$w_M(J_{\pi(k)})$ is increased by one.

	\end{itemize}

	\item[(v)] Finally, Walker takes at most $k+1$ further moves to return to vertex $a$ and then proceeds to Sequence I.
\end{itemize}

\underline{Case 2:} After the update, let there be no free active box in MinBox$(n,4pn,0.5\ln^{-1}(n),2b)$. Then Walker tosses her coin on every edge $uv \in E(G)$ on which she did not toss a coin yet. If the coin tossed for $uv$ is successful, she increases
$f_{II}(u)$ and $f_{II}(v)$ by one. Afterwards, Walker stops playing the game.

\subsection{Strategy discussion}\label{subsec:discussion}

The goal of this subsection is to prove that Walker can always follow the described strategy and that by following this strategy, Walker a.a.s.~ensures that~\eqref{eq:resilience_condition} holds by the end of the game. As explained in Subsection~\ref{subsec:setup}, this is enough to conclude Theorem~\ref{thm:main.general}.

We start by proving some statements that are guaranteed to hold as long as Walker can follow her strategy.

\begin{clm}[Maintaining properties I]\label{clm:maintain1}
	As long as Walker can follow the strategy described in Subsection~\ref{subsec:strategy}, the following holds:
	Breaker claims at most $8k+14$ edges between any two consecutive sequences of the same type (I, II or III).
\end{clm}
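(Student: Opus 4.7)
Walker's strategy cycles through the three sequence types in the fixed order $\mathrm{I}\to\mathrm{II}\to\mathrm{III}\to\mathrm{I}\to\cdots$, so the gap between two consecutive sequences of the same type consists of exactly one occurrence of each of the other two types. Since Breaker claims exactly two edges per round in the $(2{:}2)$ game, it suffices to bound the number of rounds Walker spends inside each sequence and then sum the contributions from the two sequences lying in the gap.

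I will use the convention, consistent throughout the paper (compare the proofs of Lemma~\ref{lemma:structure_strategy} and Lemma~\ref{lemma:paths_beck}), that each ``move'' of Walker equals one round of the $(2{:}2)$ game, during which Walker traverses two edges. Under this convention, Sequence~I takes at most $2$ rounds: Case~1 is the single round $a\to w\to a$, while Case~2 executes one ``sequence of two moves'' of $\mathcal{S}_{\text{paths}}$. Sequence~II takes at most $2k+2$ rounds: one round to walk the length-$2$ path $P_v$, at most $k$ rounds to follow $S_{\text{structure}}$ by Lemma~\ref{lemma:structure_strategy}, and at most $k+1$ rounds to return from $x$ back to $a$ (the return walk has length at most $2k+2$ edges, namely $2k$ along $\mathcal{S}_{v,x}$ plus $2$ along $P_v$). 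Sequence~III in Case~1 adds exactly one further exposure round at $x$ -- a successful edge walked in both directions, or one arbitrary move in a failure -- giving at most $2k+3$ rounds; Case~2 is the terminal step of the whole strategy and does not contribute to any gap.

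Summing the two sequences forming the gap, the worst case occurs between consecutive Sequence~I's, where Walker spends at most $(2k+2)+(2k+3)=4k+5$ rounds; the gaps between consecutive II's and consecutive III's are shorter, with at most $(2k+3)+2=2k+5$ and $2+(2k+2)=2k+4$ rounds respectively. Multiplying by the $2$ edges Breaker claims per round, the worst case yields at most $2(4k+5)=8k+10\le 8k+14$ Breaker edges, which proves the claim with a small amount of slack.

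The main ``obstacle'' is nothing more than careful bookkeeping: one needs to check that the cyclic order of the sequences is truly rigid (each sequence in Subsection~\ref{subsec:strategy} explicitly hands control to a unique successor) and that the per-step round counts match what is prescribed in the strategy description and Lemma~\ref{lemma:structure_strategy}. Both facts read off directly; no further tool is needed.
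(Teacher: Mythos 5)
Your proof is correct and takes essentially the same approach as the paper: bound the number of rounds Walker spends in each sequence type (at most $2$, $2k+2$, $2k+3$ for Sequences I, II, III respectively) and multiply by the two edges Breaker claims per round. The one minor difference is in what counts as the ``gap'': the paper sums the rounds over \emph{all three} sequence types, i.e.\ $2(2+(2k+2)+(2k+3)) = 8k+14$, which is the count from the start of one application of Sequence~$X$ to the start of the next (and is exactly what is needed when the bound is invoked later, e.g.\ in Claim~\ref{clm:maintain2} where a box is destroyed at the beginning of each Sequence~II). You instead count only the two interleaved sequences, obtaining the slightly sharper $8k+10$ and noting $8k+10\leq 8k+14$; this is a perfectly valid proof of the claim as stated. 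Your per-sequence round counts and the cyclic-order check match the paper.
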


\begin{proof}
	Each application of Sequence I, II, and III lasts at most $2, 2k+2$, and $2k+3$ rounds for Walker, respectively. Therefore, the number of edges claimed by Breaker between two consecutive sequences of the same type are at most $2(2+2k+2+2k+3)=8k+14$.
\end{proof}

\begin{clm}[Maintaining properties II]\label{clm:maintain2}
	As long as Walker can follow the strategy described in Subsection~\ref{subsec:strategy}, the following holds:
	For every $x\in V(G)\setminus V(W)$, it holds that $\weight(x)<\ln^2(n)$. In particular, taking $t\in [2]$ such that $x\in V_{3-t}$, Breaker claims at most
	$\ln^4(n)$ edges below level $k-1$ that see $x$ with respect to $\cB_t$ and claims at most $n^{1/3+\eps/2}$ edges between level $k-1$ and $k$ that see $x$ with respect to $\cB_t$.
\end{clm}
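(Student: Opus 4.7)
My plan is to recast the evolution of $\weight(x)$ as an instance of the Continuous Box game and invoke Lemma~\ref{lem:Cboxgame}. For each $x\in V(G)\setminus\{a\}$, with $t\in[2]$ such that $x\in V_{3-t}$, introduce a box $F_x$ whose accumulated CMaker weight is exactly $\weight(x)$: whenever Breaker claims an edge $e$ that sees $x$ with respect to $\cB_t$, this corresponds to CMaker adding $\weight_t(e)$ units to $F_x$. Breaker plays the role of CMaker and Walker plays the role of CBreaker, where $F_x$ is declared destroyed as soon as $x$ enters $V(W)$. I align the CBox rounds with Walker's actual play by letting each round end at the moment of a Sequence II. Walker's first step in Sequence II picks a not-yet-destroyed vertex maximizing $\weight$, which exactly realizes CBreaker's strategy $\mathcal{S}$ from Lemma~\ref{lem:Cboxgame}; any additional destruction of boxes corresponding to internal vertices reached during Sequences I, II, III only helps CBreaker further and does not affect the conclusion of the lemma.

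To bound CMaker's weight per round, Claim~\ref{clm:maintain1} gives that Breaker plays at most $8k+14$ edges between two consecutive Sequences II. For each $t\in[2]$, properties (E1) and (E2) of Lemma~\ref{lemma:technical} show that a single Breaker edge contributes at most $\ln^{-2}(n)\cdot\ln^{2}(n)=1$ to $\sum_{x\in V_{3-t}}\weight(x)$ if it appears below level $k-1$ with respect to $\cB_t$, and at most $n^{-1/3-0.1\eps}\cdot n^{1/3+0.1\eps}=1$ if it appears between levels $k-1$ and $k$. Summing over both choices of $t$ yields total weight added per round at most $b':=2(8k+14)=16k+28$. Applying Lemma~\ref{lem:Cboxgame} to this CBox instance with $n$ boxes and parameter $b'$ shows that every surviving box satisfies $\weight(x)\leq b'(\ln(n)+1)=(16k+28)(\ln(n)+1)<\ln^{2}(n)$, where the last inequality holds for $n$ large since $k=k(\eps)$ is a constant.

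The "in particular" statement then follows directly from the definition of $\weight_t$. Each Breaker edge below level $k-1$ that sees $x$ with respect to $\cB_t$ adds at least $\ln^{-2}(n)$ to $\weight(x)$, so the number of such edges is at most $\weight(x)\cdot\ln^{2}(n)<\ln^{4}(n)$; each Breaker edge between levels $k-1$ and $k$ that sees $x$ adds at least $n^{-1/3-0.1\eps}$, so the number of such edges is at most $\weight(x)\cdot n^{1/3+0.1\eps}<\ln^{2}(n)\cdot n^{1/3+0.1\eps}\leq n^{1/3+\eps/2}$ for $n$ large, since $\eps/2>0.1\eps$. The only care needed is the alignment of CBox rounds with Walker's sequences and the verification that Sequence II genuinely implements strategy $\mathcal{S}$; both are immediate from the strategy's design, and no subtle estimate is required beyond the elementary ones above.
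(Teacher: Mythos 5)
Your proposal is correct and follows the same route as the paper: recast the weight accumulation as an auxiliary Continuous Box game with a box per vertex, use properties (E1)/(E2) to show each Breaker edge contributes total weight at most $2$, combine this with Claim~\ref{clm:maintain1} to get the effective CBox bias $16k+28$, and invoke Lemma~\ref{lem:Cboxgame}. The one small point you gloss over is that the CBox bound applies only at the discrete moments when CBreaker destroys a box (i.e.\ when a Sequence II starts), so for an intermediate moment of the Walker--Breaker game one must add another additive $16k+28$ to the bound; the paper records $\weight(x)\leq (16k+28)(\ln n+1)+(16k+28)$ explicitly, but since this still sits comfortably below $\ln^2(n)$ the conclusion is unaffected.
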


\begin{proof}
	While the Walker-Breaker game is proceeding, consider an auxiliary Continuous Box Game with a box $F_x$ for each vertex 
	$x\in V(G)\setminus \{a\}$. Let CMaker and CBreaker play as follows:
	\begin{enumerate}
		\item[(1)] If Breaker claims an edge $e$ in the Walker-Breaker game on $G$, then do the following: for every $t\in [2]$ and $x\in V_{3-t}$ such that $e$ sees $x$ w.r.t.~$\cB_t$, CMaker
		adds $\weight_t(e)$ to the box $F_x$.
		\item[(2)] If Walker chooses a vertex $x$ according to Sequence II, i.e.~such that $\weight(x)$ is largest, then do the following: at the beginning of the sequence, i.e.~already before Walker starts to play according to the steps (i)--(iv), CBreaker destroys box $F_x$ in the 
		auxiliary Continuous Box Game.
	\end{enumerate}
	
	Then we observe the following. With every edge $e$ that Breaker claims, CMaker adds a total weight of at most $2$ over all the boxes. This follows from the definition of the weights and the property (E) in Lemma~\ref{lemma:technical}. Indeed, let $e$ be any edge that Breaker claims, and let $t\in [2]$. 
	If $e$ is an edge below level $k-1$ w.r.t.~$\mathcal{B}_t$,
	then $\weight_t(e)=\ln^{-2}(n)$ by definition, and by (E1) from Lemma~\ref{lemma:technical} we know that $e$ sees at most $\ln^2(n)$ vertices $x\in V_{3-t}$ w.r.t.~$\cB_t$.
	Similarly, if $e$ is an edge between levels $k-1$ and $k$ w.r.t.~$\mathcal{B}_t$, then $\weight_t(e)=n^{-1/3-0.1\eps}$ by definition, and by (E2) from Lemma~\ref{lemma:technical} we know that $e$ sees at most $n^{1/3+0.1\eps}$ vertices $x\in V_{3-t}$ w.r.t.~$\cB_t$.
	Hence, in any case, a total weight of at most $1$ is added over all boxes $F_x$ with $x\in V_{3-t}$, and hence at most $2$ over all boxes.
	
	Now, since between any two applications of Sequence II
	Breaker claims at most $8k+14$ edges, we obtain that
	CMaker adds a total weight of at most $2\cdot (8k+14)=16k+28$
	over all boxes, before CBreaker destroys the next box.
	Hence, we can consider the bias to be $(16k+28:1)$ for this auxiliary game. 
	Moreover, by the description in (2) we also know that
	CBreaker follows the strategy $\mathcal{S}$ described shortly before Lemma~\ref{lem:Cboxgame}. Therefore, Lemma~\ref{lem:Cboxgame} guarantees that CMaker never achieve to have more than $(16k+28)(\ln(n)+1)$ weight in any box
	which is still not destroyed. We conclude that we have
	$$\weight(x)\leq (16k+28)(\ln(n)+1) + (16k+28) < \ln^2(n)$$
	as long as $x\notin V(W)$. Note that the additional factor of $(16k+28)$ is added to also have a bound for all the intermediate rounds in the Walker-Breaker game in which we do not make an update in the Continuous Box Game. 
	
	Now, fix any $x\notin V(W)$ and let $t\in [2]$ be such that $x\in V_{3-t}$. Since every edge below level $k-1$ that sees $x$ with respect to $\cB_t$ has $\weight_t(e)=\ln^{-2}(n)$,
	it follows from $\weight(x)<\ln^2(n)$ that Breaker cannot have more than $\ln^4(n)$ of these edges.
	Similarly, since every edge between level $k-1$ and $k$ that sees $x$ with respect to $\cB_t$ has $\weight_t(e)=n^{-1/3-0.1\eps}$,
	it follows that Breaker cannot claim more than $n^{1/3+0.1\eps}\ln^2(n) < n^{1/3+\eps/2}$ of these edges. 
\end{proof}

Having the above claim in hands, we are now able to prove that Walker can always follow the proposed strategy. We start with Sequence I.

\begin{clm}[Following Sequence I]
	Walker can always follow the proposed strategy of Sequence I.
\end{clm}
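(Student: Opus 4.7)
The plan is to verify the two cases of Sequence I separately.

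For Case 1, I would use property (S) of Lemma~\ref{lemma:technical}, which gives $|N(a,R)|\geq n^{1/3+\eps/2}$. By the case assumption $|N_a|<n^{1/3}$, so it suffices to bound the edges incident to $a$ that Breaker has already claimed. Up to this point at most $n^{1/3}$ Sequence I applications have been completed, and Claim~\ref{clm:maintain1} gives at most $8k+14$ Breaker edges between consecutive Sequence I's. Hence Breaker's total edge count so far is at most $n^{1/3}(8k+14)=o(n^{1/3+\eps/2})$, so $N(a,R)\setminus(N_a\cup\{w:aw\in B\})$ is non-empty and a valid $w$ can be chosen.

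For Case 2, I would apply Lemma~\ref{lemma:paths_beck} with $b=8k+14$, $A=N_a$, and the sets $C_1,\ldots,C_s$ running over all $C^x[Z_1,Z_2]$ with $x\in V(G)\setminus\{a\}$ and $(Z_1,Z_2)\in\goodx$. Three hypotheses need to be checked. The precondition that $A$ be a Walker-claimed star of size $n^{1/3}$ centered at $a$ is automatic from the Case 1 phase. The bound $s\leq\exp(n^{1/3+\eps})$ follows by a direct binomial count: admissible pairs $(Z_1,Z_2)$ are chosen from at most $n^2$ edges of $G$ with $|Z_1|=\ln^4(n)$ and $|Z_2|=n^{1/3+\eps/2}$, giving $s\leq n\binom{n^2}{\ln^4 n}\binom{n^2}{n^{1/3+\eps/2}}=\exp(O(n^{1/3+\eps/2}\ln n))$, which is below $\exp(n^{1/3+\eps})$ for $n$ large. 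Finally, the available-edge bound follows from property (R): we have $e_G(N_a,C^x[Z_1,Z_2])\geq n^{1/3+1.5\eps}$, and subtracting the $O(kn^{1/3})$ Breaker edges counted above leaves at least $n^{1/3+1.1\eps}$ available, as required.

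The main subtlety is confirming that Breaker's total edge count is still only $O(kn^{1/3})$ at the moment $\mathcal{S}_{paths}$ is first invoked. Here I would observe that the skip-to-Sequence-II condition cannot trigger before Case 2 is ever executed, since it requires Walker to already hold claimed edges from $N_a$ into the candidate sets $C^x[Z_1,Z_2]$, and such edges can only be produced by a Case 2 move of $\mathcal{S}_{paths}$. Therefore the first Case 2 invocation occurs at the $(n^{1/3}+1)$-th application of Sequence I, immediately after the $n^{1/3}$ Case 1 applications together with the accompanying Sequence II and III phases, at which point the Breaker-edge bound above applies. Once $\mathcal{S}_{paths}$ is initiated, Walker follows this single deterministic strategy throughout all subsequent Case 2 invocations, and the Beck's-criterion-based argument inside the proof of Lemma~\ref{lemma:paths_beck} guarantees its continued executability without any further re-verification of the preconditions.
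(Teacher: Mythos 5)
Your proof is correct and matches the paper's approach: Case~1 is handled by combining property~(S) with the edge count from Claim~\ref{clm:maintain1}, and Case~2 by checking the hypotheses of Lemma~\ref{lemma:paths_beck} via the binomial bound on $s$ and Property~(R). Your additional ``subtlety'' paragraph is superfluous (the paper simply counts Breaker's edges at the first Case~2 invocation without discussing the skip condition), and within it the assertion that edges from $N_a$ into candidate sets ``can only be produced by a Case~2 move of $\mathcal{S}_{paths}$'' is not strictly accurate---Sequence~II also claims such edges once $|N_a|=n^{1/3}$---but this slip has no bearing on the validity of the argument.
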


\begin{proof}
	If Case 1 happens, i.e.~$|N_a|<n^{1/3}$,
	then this means that Sequence I happened less than $n^{1/3}$ times so far, and by Claim~\ref{clm:maintain1} Breaker has at most
	$n^{1/3}(8k+14)$ edges in total. Moreover, by property (S) of Lemma~\ref{lemma:technical}, we know that $|N(a,R)|\geq n^{1/3+\eps/2}$.
	Hence, Walker can find a vertex $w$ as described in step (i) of this case and hence follows the strategy. 
	
	If otherwise Case 2 happens, then it remains to check that Walker can indeed follow the strategy $\mathcal{S}_{paths}$ as proposed, i.e.~check the requirements of Lemma~\ref{lemma:paths_beck}. That is, we need to check 
	that $|\goodx|\leq \exp(n^{1/3+\eps})$ and that, when Case 2 happens for the first time, for each $(Z_1,Z_2)\in \goodx$ we have at least $n^{1/3+1.1\eps}$ available edges between
	$C^x[Z_1,Z_2]$ and $N_a$. For the first requirement observe that the number of pairs $(Z_1,Z_2)\in \goodx$ is at most
	$$
	\binom{n^2}{\ln^4(n)}\cdot \binom{n^2}{n^{1/3+\eps/2}} < \exp(n^{1/3+\eps})
	$$
	for large $n$. For the number of available edges, observe that
	at most $n^{1/3}(8k+14)$ edges were claimed by Breaker when Case 2 happens for the first time, while Property (R) ensures that
	$e_G(N_a,C^x[Z_1,Z_2])\geq n^{1/3+1.5\eps}$ for every $(Z_1,Z_2)\in \goodx$. Hence, more than
	$n^{1/3+1.1\eps}$ of these edges are still free.
\end{proof}

By following Sequence I, Walker additionally maintains the following property.

\begin{clm}[Maintaining properties III]\label{clm:maintain3}
	From the moment when $|N_a|=n^{1/3}$ holds for the first time,
	the following always happens:
	Let $t\in [2]$ and $x\in V_{3-t}$. Let 
	$(Z_1,Z_2)\in \goodx$, then there is a path $(a,y,v)$ of length 2 such that both its edges are available and $y\in N_a$ and $v\in C^x[Z_1,Z_2]$. 
\end{clm}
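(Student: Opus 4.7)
The plan is to read off the conclusion from Lemma~\ref{lemma:paths_beck} applied with $A = N_a$, bias $b = 8k+14$, and $C_1,\dots,C_s$ ranging over all sets of the form $C^x[Z_1,Z_2]$ with $t\in[2]$, $x\in V_{3-t}$, and $(Z_1,Z_2)\in\goodx$. The central observation is that every edge $ay$ with $y\in N_a$ was claimed by Walker during Case~1 of Sequence~I, so $ay\in W$ is permanently available; hence to produce the required path $(a,y,v)$ it suffices to guarantee, at each moment, the existence of an available edge in $E_G(N_a, C^x[Z_1,Z_2])$ for every admissible choice of $t,x,Z_1,Z_2$.

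First I would verify the two hypotheses of Lemma~\ref{lemma:paths_beck} at the instant $|N_a|$ first reaches $n^{1/3}$. The cardinality bound
$$
s\leq n\cdot\binom{\binom{n}{2}}{\ln^4(n)}\binom{\binom{n}{2}}{n^{1/3+\eps/2}}\leq\exp(n^{1/3+\eps})
$$
is essentially identical to the estimate already carried out in the discussion of Case~2 of Sequence~I. For the edge-count hypothesis, Claim~\ref{clm:maintain1} tells us that by the time Walker has completed her $n^{1/3}$-th Sequence~I invocation, Breaker has claimed at most $n^{1/3}(8k+14)$ edges in total. Coupling this with property~(R) of Lemma~\ref{lemma:technical}, which supplies $e_G(N_a,C^x[Z_1,Z_2])\geq n^{1/3+1.5\eps}$ for every admissible $(t,x,Z_1,Z_2)$, the number of free edges in each $E_G(N_a,C^x[Z_1,Z_2])$ is at least $n^{1/3+1.5\eps}-n^{1/3}(8k+14)\geq n^{1/3+1.1\eps}$ for $n$ large, which is precisely what Lemma~\ref{lemma:paths_beck} demands.

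From this moment on, whenever Walker enters Case~2 of Sequence~I she plays according to $\mathcal{S}_{paths}$, and Claim~\ref{clm:maintain1} ensures that at most $b=8k+14$ Breaker edges accumulate between two consecutive Sequence~I invocations; conclusion~(ii) of Lemma~\ref{lemma:paths_beck} therefore guarantees that each $E_G(N_a,C^x[Z_1,Z_2])$ always contains an edge that is free or lies in $W$. The only subtlety is that Walker may temporarily skip Case~2 when the preliminary if-condition of Sequence~I triggers; but in that case, by definition of the condition, she already holds a Walker edge in every $E_G(N_a,C^x[Z_1,Z_2])$, and since Walker's edges remain in $W$ forever, availability is trivially preserved throughout such skipped rounds and this monotonicity also prevents the auxiliary game of Lemma~\ref{lemma:paths_beck} from being reactivated after its winning sets have been occupied. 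Concatenating any such available edge $yv$ with the Walker edge $ay$ produces the required length-two path $(a,y,v)$.

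I do not expect any single step to be a serious obstacle; the whole argument is a direct application of Lemma~\ref{lemma:paths_beck} on top of the quantitative input supplied by property~(R). The only mildly delicate point is checking that the margin $n^{1/3+1.5\eps}-n^{1/3+1.1\eps}$ furnished by~(R) survives the $O(n^{1/3})$ Breaker edges accumulated during the initial $n^{1/3}$ Case~1 invocations of Sequence~I, but the exponents in Lemma~\ref{lemma:technical} and Lemma~\ref{lemma:paths_beck} have been tuned precisely with this application in mind.
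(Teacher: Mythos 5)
Your argument is correct and follows essentially the same route as the paper: both read the conclusion off part~(ii) of Lemma~\ref{lemma:paths_beck}, whose hypotheses are supplied by property~(R) of Lemma~\ref{lemma:technical} and the edge-count from Claim~\ref{clm:maintain1}, and then concatenate the resulting available edge with the Walker edge $ay\in W$. You re-verify the hypotheses of Lemma~\ref{lemma:paths_beck} inline (the paper does this in the immediately preceding claim on following Sequence~I), and you also spell out the monotonicity point about rounds where the if-condition of Sequence~I makes Walker skip Case~2 --- a detail the paper's terse proof leaves implicit but which you handle correctly.
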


\begin{proof}
	Since Walker can always follow the strategy of Sequence I, and hence of $\mathcal{S}_{paths}$ from the moment when $|N_a|=n^{1/3}$, we know that by the end of the game she may have an edge between $N_a$ and each of the mentioned sets $C^x[Z_1,Z_2]$.
	In particular, this means that she eventually claims a path $(a,y,z)$ of length 2 such that $y\in N_a$ and $z\in C^x[Z_1,Z_2]$. 
	Hence, the claim follows as all free edges and all Walker's edges are available.
\end{proof}

Let us prove next that Walker can always follow the strategy
when she needs to move according to Sequence II or Sequence III.

\begin{clm}[Following Sequences II and III]
	Walker can always follow the proposed strategy of Sequences~II and III.
\end{clm}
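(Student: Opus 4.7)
The plan is to show that in both Sequences~II and III the only nontrivial point is step~(i), namely identifying a vertex $v\in C^x(s_{k,1})$ that admits an available length-$2$ path $P_v$ from $a$ via $N_a$ and an available $(\mathcal{B}_t,x)$-structure $\mathcal{S}_{v,x}$ starting at $v$. Once $v$ has been produced, step~(ii) traverses $P_v$ in one round, step~(iii) applies Lemma~\ref{lemma:structure_strategy} to $\mathcal{S}_{v,x}$ (treating Breaker's edges outside the structure as irrelevant) to reach $x$ within $k$ further rounds, and the closing step simply retraces the freshly walked Walker-edges back to $a$, well within the allowed $k+1$ additional rounds. So the whole task reduces to the existence of $v$.

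The argument for the existence of $v$ is the same in both sequences. Write $Z_1^B$ (respectively $Z_2^B$) for the set of Breaker-claimed edges that see $x$ with respect to $\mathcal{B}_t$ and appear below level $k-1$ (respectively between levels $k-1$ and $k$). Suppose for a moment that $|Z_1^B|\le \ln^4(n)$ and $|Z_2^B|\le n^{1/3+\eps/2}$. Pad both sets with arbitrary extra edges of the correct types -- this only shrinks $C^x[Z_1,Z_2]$ -- to obtain $(Z_1,Z_2)\in \goodx$ with $Z_1^B\subseteq Z_1$ and $Z_2^B\subseteq Z_2$. Since $|N_a|=n^{1/3}$ by the time either sequence runs (otherwise Sequence~I would still be in Case~1), Claim~\ref{clm:maintain3} supplies an available length-$2$ path $(a,y,v)$ with $y\in N_a$ and $v\in C^x[Z_1,Z_2]$. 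By the definition of relevance, $v\in C^x[Z_1,Z_2]$ means that no $(\mathcal{B}_t,x)$-structure containing $v$ contains any edge of $Z_1\cup Z_2$, and in particular no edge of $Z_1^B\cup Z_2^B$; combining this with the existence of at least one such structure guaranteed by Observation~\ref{obs:CgivesSk}, we obtain a $(\mathcal{B}_t,x)$-structure at $v$ whose edges are all Breaker-free and hence available, which is precisely the $v$ we need.

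What remains is to verify the bounds $|Z_1^B|\leq \ln^4(n)$ and $|Z_2^B|\leq n^{1/3+\eps/2}$ for the $x$ chosen in each sequence. For Sequence~II, the pick satisfies $x\notin V(W)$ by construction, so Claim~\ref{clm:maintain2} supplies both bounds directly. For Sequence~III the exposure vertex $x$ may lie in $V(W)$ -- indeed, once Walker's graph is spanning this is automatic -- and Claim~\ref{clm:maintain2} no longer applies verbatim; this is the main obstacle. I would resolve it by running, in parallel to the Continuous Box Game from the proof of Claim~\ref{clm:maintain2}, a second such game, identical in the boxes and in CMaker's weight additions but where CBreaker destroys the box $F_x$ at the start of each Sequence~III application that selects $x$ as exposure vertex. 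Claim~\ref{clm:maintain1} again gives at most $8k+14$ Breaker edges between two such destructions, contributing at most $16k+28$ total weight across all boxes by the same double counting via (E1) and (E2) used in Claim~\ref{clm:maintain2}, so the effective bias is again $(16k+28{:}1)$. The subtle issue is that Walker's Sequence~III selection is by maximum MinBox danger rather than by maximum weight, so the induced destruction rule does not literally coincide with the strategy $\mathcal{S}$ of Lemma~\ref{lem:Cboxgame}; the potential-function bookkeeping behind Lemma~\ref{lem:Cboxgame} has to be re-executed in this modified setting to yield $\weight(x)<\ln^2(n)$ at the moment of selection, after which the bounds on $|Z_1^B|,|Z_2^B|$ are read off exactly as in Claim~\ref{clm:maintain2} and the existence argument above produces the required $v$.
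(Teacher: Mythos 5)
Your proof fails at the assertion that ``$|N_a|=n^{1/3}$ by the time either sequence runs (otherwise Sequence~I would still be in Case~1).'' This is false: Case~1 of Sequence~I adds a single vertex to $N_a$ and then passes \emph{immediately} to Sequence~II, so Sequences~II and~III are each executed many times while $|N_a|<n^{1/3}$, and Claim~\ref{clm:maintain3} is then not applicable. The paper treats this case with a separate argument: if $|N_a|<n^{1/3}$, then at most $(8k+14)n^{1/3}$ rounds have been played (Claim~\ref{clm:maintain1}), so one can choose a fresh set $A\subseteq N(a,R)$ of size $n^{1/3}$ all of whose edges to $a$ are still free, and Property~(R) of Lemma~\ref{lemma:technical} then supplies $n^{1/3+1.5\eps}$ edges from $A$ to $C^x[Z_1,Z_2]$, far exceeding the number of edges Breaker can have claimed, which yields the required path and structure. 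This branch of the argument is missing from your proposal.

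Concerning Sequence~III, you raise a legitimate subtlety (the exposure vertex $x$ may lie in $V(W)$, so Claim~\ref{clm:maintain2} does not apply verbatim); the paper itself does not discuss this and simply treats the Sequence~III analysis of step~(i) as identical to Sequence~II, addressing only the new step~(iv). However, your proposed repair --- a second Continuous Box game in which CBreaker destroys $F_x$ whenever Sequence~III selects $x$ --- does not satisfy the hypotheses of Lemma~\ref{lem:Cboxgame}: there CBreaker must destroy the box in which CMaker holds the \emph{largest weight}, whereas Sequence~III picks by MinBox \emph{danger} $\dang(J_x)$, a quantity driven by $d_B(x)$ and independent of $\weight(x)$. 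Breaker can drive up $\weight(x)$ for a single $x$ using edges that see $x$ but are not incident to $x$, keeping $\dang(J_x)$ small, so the induced CBreaker in your auxiliary game is not playing $\mathcal S$. You flag that the potential-function bookkeeping ``has to be re-executed,'' but that re-execution is the entire content of the would-be fix and is not supplied, so this part of your argument is also incomplete; it does not reproduce the paper's reasoning (which bypasses the issue rather than resolving it), nor does it give a self-contained alternative.
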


\begin{proof}
	Let us consider Sequence II first and assume that Walker so far could always follow her strategy. Recall that the sequence starts in the vertex $a$. Moreover, assume further that Walker has fixed a vertex $x$ according to the description of Sequence II, and let $t\in [2]$ such that $x\in V_{3-t}$. 
	
	We start by checking that Walker can follow step (i). For this,
	before the sequence starts, denote with $Z_1$ and $Z_2$ the edges of Breaker which are below level $k-1$ or between level $k-1$ and $k$, respectively, which see vertex $x$ with respect to $\cB_t$. By Claim~\ref{clm:maintain2}, we have $(Z_1,Z_2)\in \goodx$. We may consider two cases.
	First, we may assume that $|N_a|<n^{1/3}$. Then Sequence I happened less than $n^{1/3}$ times so far and, following Claim~\ref{clm:maintain1}, at most $(8k+14)n^{1/3}$ rounds have been played. In particular, we can find a set $A\subseteq N_G(a,R)$
	of size $n^{1/3}$ such that all edges of $E_G(a,A)$ are free.
	Moreover, by Property~(R) of Lemma~\ref{lemma:technical} it follows that there are at least $n^{1/3+1.5\eps}$ edges between $A$ and $C^x[Z_1,Z_2]$ in $G$. Among these edges, there must be free edges since at most $(8k+14)n^{1/3}$ rounds have been played so far; denote one such edge with $yv$ such that $y\in A$ and $v\in C^x[Z_1,Z_2]$. Then the path $P_v=(a,y,v)$ consists of available edges. 
	Moreover, since $v\in C^x[Z_1,Z_2]$ and because of Observation~\ref{obs:CgivesSk}, there exists a $(\cB_t,x)$-structure $\mathcal{S}_{v,x}$ incident with $v$, the edges of which are not claimed by Breaker. 
	Assume then that $|N_a|=n^{1/3}$. Then, by Claim~\ref{clm:maintain3}, we find a path $P_v=(a,y,v)$ consisting of available edges such that $y\in N_a$ and $v\in C^x[Z_1,Z_2]$. Hence, Walker can follow (i) as before.
	
	Now, step (ii) can easily be followed, since the edges of $P_v$ are available when Walker starts the sequence. Step (iii) can be done by Lemma~\ref{lemma:structure_strategy}. Note that steps (i)--(iii) only need $k+1$ moves by Walker. By returning on the same edges to $a$, Walker can easily follow step (iv). 
	
	\smallskip
	
	Hence, let us turn to Sequence III. Its discussion is almost the same as the discussion of Sequence II, except that step (iv) in this sequence is new. For this step note that whenever a failure happens, i.e.~no success on an edge or success on an edge which is not free, then the strategy allows Walker to walk along an arbitrary available edge incident with $x$. (At least one such edge exists since Walker just reached $x$ and hence occupies an edge incident with $x$.) Otherwise, if no failure happens, i.e.~success on a free edge, then the strategy asks Walker to claim this free edge which is incident with her current position $x$. Hence, in any case, Walker can follow that part of the strategy.
\end{proof}

Now, since Walker can always follow the strategy of Sequences I~and II, it follows that by the end of the game her graph is spanning. In order to show that it is very likely to achieve~\eqref{eq:resilience_condition}, we now take a closer look at the randomized moves of step (iv) in Sequence III. The following four claims are analogues of Claims~3.1--3.4
in~\cite{F15Gen}.

\begin{clm}[Analysing random moves I] \label{clm:ranmovesI}
	As long as Walker does not play according to Case 2 of Sequence III, for every $v \in V(G)$ it holds that $w_B(J_v)<2pn$ and $w_M(J_v)<2pn(1 + \ln^{-1}(n))$.
\end{clm}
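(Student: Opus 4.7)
The plan is that this is essentially an accounting claim whose only probabilistic input is Claim~\ref{clm:degrees}, on which I condition to have $d_G(v) \leq (1+\eps)pn$ for every $v \in V(G)$. I will bound $w_B(J_v)$ and $w_M(J_v)$ separately by charging each increment of either counter to a distinct edge of $G$ incident to $v$.

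For the $w_B$ bound I will simply note that the update at the start of every Sequence~III application raises $w_B(J_v)$ by one for each Breaker-edge incident to $v$; since Breaker can only claim edges of $G$, this already gives $w_B(J_v) \leq d_G(v) \leq (1+\eps)pn<2pn$ upon using $\eps<1$.

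For $w_M(J_v)$ I will inspect the description of Sequence~III and observe that $w_M(J_v)$ is modified only in the following three situations:
\begin{enumerate}
\item[(a)] a $+1$ at the start of every exposure round whose exposure vertex is $v$;
\item[(b)] a $+1$ whenever $v=\pi(k)$ is the success vertex in a trial at some $x\neq v$;
\item[(c)] a single $+2pn\ln^{-1}(n)$ in the event of a failure of type~I at $v$.
\end{enumerate}
Case~(c) contributes at most once, because immediately after such a failure the strategy sets $U_v=\varnothing$ and removes $v$ from every other $U_w$, which simultaneously disables all future (a)- and (b)-contributions at $v$.

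The heart of the argument is that each (a)- or (b)-event consumes at least one distinct edge of $G$ incident to $v$: a trial at $v$ tosses a coin on some $w\in U_v$ and then removes $w$ from $U_v$, while a (b)-event corresponds exactly to the successful edge $vx$. The bookkeeping of the sets $U_{w}$ ensures that no edge of $v$ is touched in two different trials, so the combined number of (a)- and (b)-events is at most $d_G(v)\leq (1+\eps)pn$. Adding the at most one (c)-contribution yields
$$
w_M(J_v) \;\leq\; (1+\eps)pn + 2pn\ln^{-1}(n) \;<\; 2pn\bigl(1+\ln^{-1}(n)\bigr),
$$
where the last inequality again uses $\eps<1$. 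I do not anticipate any real obstacle; the only slightly delicate point is verifying that no further (b)-contribution can occur after a failure of type~I at $v$, which is guaranteed directly by the deletion ``remove $v$ from every $U_w$'' built into the description of that failure.
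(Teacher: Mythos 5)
Your proof is correct and follows essentially the same route as the paper: bound $w_B(J_v)$ by $d_G(v)$ using Claim~\ref{clm:degrees}, and decompose the updates to $w_M(J_v)$ into the same three contribution types (exposure vertex, success vertex, one-time type-I failure bonus). Your edge-charging argument for the (a)- and (b)-events is slightly more explicit than the paper's one-line appeal to the maximum degree, which is a harmless refinement; the only hair-thin caveat worth noticing is that an (a)-event with $U_v=\varnothing$ at the start of the round consumes no edge, but such a round is automatically a type-I failure and hence occurs at most once, adding at most $1$ to your count without affecting the conclusion.
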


\begin{proof}
	The bound on $w_B(J_v)$ follows from Claim~\ref{clm:degrees} and the fact that Breaker claims an element of $J_v$ in the MinBox game if and only if he claims an edge incident to $v$. We observe that $w_M(J_v)$ is updated under three scenarios. It is increased by one whenever $v$ is the exposure vertex. It is also increased by one if $v$ is not the exposure vertex but the coin shows success on edge incident with $v$. Since the maximum degree in $G$ is smaller than $2pn$ (Claim~\ref{clm:degrees}), these first two scenarios contribute at most $2pn$ to $w_M(J_v)$. Moreover, $w_M(J_v)$ is increased by up to $2pn\ln^{-1}n$ whenever Walker encounters a failure of type I at vertex $v$; however this may happen at most once. Hence, $w_M(J_v)$ cannot become larger than $2pn(1+\ln^{-1}(n))$.
\end{proof}

\begin{clm}[Analysing random moves II]\label{clm:ranmovesII}
	For every vertex $v \in V(G)$, the box $J_v$ becomes inactive before 
	$d_B(v) \geq \eps pn/5$.
\end{clm}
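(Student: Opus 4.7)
My plan is to apply Theorem~\ref{theorem:MinBox} to the auxiliary game MinBox$(n,4pn,0.5\ln^{-1}(n),2b)$ simulated in step~(iv) of Sequence~III, where $b := 8k+14$. In Case~1 of that sequence Walker picks the exposure vertex $x$ precisely so that $J_x$ is a free active box with the largest value of $\dang(J_x)$; this is exactly the Maker strategy required by Theorem~\ref{theorem:MinBox}. (On a successful coin toss Walker additionally claims an element of another box $J_{\pi(k)}$, but such an extra Maker claim can only decrease a danger further, so the hypothesis of the theorem is still satisfied.) The conclusion of the theorem therefore gives
\[
\dang(J_v) \;=\; w_B(J_v) - 2b\cdot w_M(J_v) \;\leq\; 2b(\ln n + 1)
\]
for every currently active box $J_v$ at every moment during the game.

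Next I would feed in the activity hypothesis. As long as $J_v$ is still active, the definition of an active box in MinBox$(n,4pn,0.5\ln^{-1}(n),2b)$ forces $w_M(J_v) < 0.5\ln^{-1}(n)\cdot|J_v| = 2pn/\ln n$, and substituting this into the danger bound yields
\[
w_B(J_v) \;<\; \frac{4b\,pn}{\ln n} + 2b(\ln n + 1).
\]
Since $pn \geq n^{1/3+\eps}$ grows much faster than $\ln n$ while $k$ and $\eps$ are constants, both summands on the right are $o(pn)$. In particular $w_B(J_v) < \eps pn/6$ once $n$ is large enough.

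The final, and only slightly delicate, step will be to translate $w_B(J_v)$ back into $d_B(v)$. The counter $w_B(J_v)$ is refreshed only at the beginning of each execution of Sequence~III, so in principle it may lag behind $d_B(v)$. However, Claim~\ref{clm:maintain1} guarantees that at most $8k+14 = b$ edges of Breaker are claimed between any two consecutive starts of Sequence~III, so $d_B(v) \leq w_B(J_v) + b$ holds throughout the game. Combining with the previous bound gives $d_B(v) < \eps pn/6 + b < \eps pn/5$ for $n$ large enough, which is exactly the assertion of the claim. I do not anticipate any real obstacle here: the proof reduces to a direct combination of Theorem~\ref{theorem:MinBox}, the activity threshold, and the update-lag bound from Claim~\ref{clm:maintain1}; the one thing to double-check is that the auxiliary MinBox bias $2b$ really matches the constant used in Sequence~III and that $\eps pn/5$ comfortably dominates both $o(pn)$ summands plus the additive lag, which it does with room to spare.
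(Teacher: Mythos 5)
Your proof is correct and takes essentially the same route as the paper's: both hinge on applying Theorem~\ref{theorem:MinBox} with bias $2b=16k+28$ to the auxiliary MinBox game and then combining the resulting bound $\dang(J_v)\leq 2b(\ln n+1)$ with the activity threshold $w_M(J_v)<2pn\ln^{-1}(n)$. The paper phrases it as a contradiction (assume $w_B(J_v)\geq \eps pn/5$ while $J_v$ is active, rearrange the danger bound to force $w_M(J_v)\geq 2pn\ln^{-1}(n)$), whereas you argue directly (while $J_v$ is active, the danger bound forces $w_B(J_v)=o(pn)$), but these are the same inequality read in opposite directions. Your explicit $d_B(v)\leq w_B(J_v)+b$ lag correction via Claim~\ref{clm:maintain1} is a small clarification the paper handles more implicitly through the additive $+b$ in its line ``$\eps pn/5\leq w_B(J_v)\leq \eps pn/5+b$''; your observation that extra Maker claims (on $J_{\pi(k)}$, or the bulk claim on a type-I failure) can only decrease danger and hence do not break the hypotheses of Theorem~\ref{theorem:MinBox} is likewise implicitly assumed in the paper.
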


\begin{proof}
	Assume to the contrary that at some point in the game there is an active box $J_v$ with $w_B(J_v)=d_B(v) \geq \eps pn/5$. In the first round in which the size of $\eps pn/5$ gets exceeded we must have
	$\eps pn/5\leq w_B(J_v)\leq \eps pn/5 + b$.
	At that moment $J_v$ must still be free, since $w_M(J_v)<2pn(1 + \ln^{-1}(n))$ by the previous claim and since $|J_v|=4pn$ by definition.	
	But then, by Theorem~\ref{theorem:MinBox}, $\dang(J_v)=w_B(J_v)-2b\cdot w_M(J_v) \leq 2b (\ln(n)+1)$ which rearranges to give $w_M(J_v) \geq \frac{w_B(J_v)}{2b} - (\ln (n)+1) \geq \frac{\eps pn }{10b} - (\ln (n)+1) \geq 0.5 \ln^{-1}(n) \cdot 4pn$, for large enough $n$. However, this contradicts the assumption that $J_v$ is still active.
\end{proof}

\begin{clm}[Analysing random moves III]\label{clm:ranmovesIII}
	A.a.s.~the following happens: For every vertex $v \in V(G)$, as long as 
	$U_v \neq \varnothing$, the box $J_v$ is active. Hence, a.a.s.~no edge of $G$ will get exposed because of Case 2 in Sequence III.
\end{clm}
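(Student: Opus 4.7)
The plan is to reduce the claim to a concentration estimate on the number of successful coin tosses incident to each vertex. Fix $v\in V(G)$ and let $T_v$ denote the number of edges $vw\in E(G)$ on which Walker's coin toss turns out to be a success. Each edge is exposed at most once and the coin tosses are independent with success probability $\ln^{-1}(n)$, so $T_v$ is stochastically dominated by $\mathrm{Bin}(d_G(v),\ln^{-1}(n))$. Conditioning on Claim~\ref{clm:degrees} we may assume $\mathbb{E}[T_v]\leq (1+o(1))pn/\ln(n)$, and since $pn\geq n^{1/3+\eps}$, a Chernoff bound yields
\begin{equation*}
	\Pr\bigl[T_v\geq 1.5\,pn/\ln(n)\bigr]\leq \exp\bigl(-\Omega(pn/\ln(n))\bigr)=o(n^{-2}).
\end{equation*}
A union bound over $v\in V(G)$ then shows that a.a.s.\ $T_v<1.5pn/\ln(n)$ for every~$v$, and I will condition on this event for the remainder of the argument.

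Next, I will relate $w_M(J_v)$ to $T_v$ by bookkeeping the increments of $w_M(J_v)$. By inspection of the strategy, $w_M(J_v)$ is incremented by $+1$ either at the start of a round in which $v$ is the exposure vertex, or whenever some other vertex's exposure produces a successful hit $\pi(k)=v$; and by a single jump of $+2pn\ln^{-1}(n)$ at the instant a failure of type~I occurs at~$v$, should one occur. Before any such failure at~$v$, every $v$-exposure ends with a successful toss at~$v$, so the number of $+1$-increments to $w_M(J_v)$ so far equals exactly the number of successful coin tosses on edges incident with~$v$ so far. Consequently, $w_M(J_v)\leq T_v<1.5pn/\ln(n)$ remains strictly below the activeness threshold $2pn/\ln(n)$ at all such moments; and during the (at most one) failing $v$-round itself, one start-of-round $+1$ is already in place before the large jump, giving $w_M(J_v)\leq T_v+1<2pn/\ln(n)$ until the jump occurs. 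At the instant of the jump, the strategy's own bookkeeping simultaneously sets $U_v:=\varnothing$.

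The upshot is that a.a.s.\ the only way $J_v$ can become inactive is through a failure of type~I at~$v$, and at that very moment $U_v$ is emptied, proving the first statement. For the "Hence" part, observe that whenever $J_v$ is active we have $w_M(J_v)<2pn/\ln(n)$, and combining with $w_B(J_v)<2pn$ from Claim~\ref{clm:ranmovesI} gives $w_M(J_v)+w_B(J_v)<2pn+2pn/\ln(n)<4pn=|J_v|$, so $J_v$ is free whenever it is active. Thus whenever some $U_v\neq\varnothing$ the box $J_v$ is both free and active, which forces Walker into Case~1 of Sequence~III rather than Case~2. Consequently Case~2 can trigger only once every $U_v$ has already been emptied, at which point there is no untossed edge of $G$ left, as required. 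The main obstacle will be the precise bookkeeping in the second paragraph; once the identity between the $+1$-increments of $w_M(J_v)$ and the successful tosses at~$v$ is nailed down, the Chernoff estimate from the first paragraph closes the argument routinely.
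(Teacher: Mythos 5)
Your proof is correct and follows essentially the same approach as the paper: condition on a Chernoff-type concentration bound for the number of successful coin tosses at each vertex, observe that as long as no type~I failure has occurred at $v$, the number of $+1$-increments to $w_M(J_v)$ is controlled by the number of successes incident to $v$, and conclude that $J_v$ must still be active whenever $U_v\neq\varnothing$. You work this out in slightly more detail than the paper (which phrases it as a short contradiction argument), you use a clean constant $1.5$ where the paper's use of $(1+2\eps)$ implicitly requires $\eps<1/2$, and you spell out the "Hence" clause by combining activeness with Claim~\ref{clm:ranmovesI} to get freeness — a step the paper leaves implicit — but the underlying idea is the same.
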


\begin{proof}
	Applying Claim~\ref{clm:degrees} for $H\sim G_{\ln^{-1}(n)}$, we have that a.a.s.~every vertex $v\in V(H)$ satisfies $d_H(v)\leq (1\pm 2\eps)pn \ln^{-1}(n)$. In particular, the following is a.a.s.~true: fixing any vertex $v$, there happen to be at most $(1\pm 2\eps)pn \ln^{-1}(n)$ incident edges on which the coin tossing is a success. From now on, condition on this property.
	
	Assume that the statement of the claim is wrong, i.e.~at some moment in the game there is a vertex $v$ for which we have $U_v \neq \varnothing$ and the box $J_v$ is inactive. 
	The assumption $U_v \neq \varnothing$ means that not every edge at $v$ has been exposed yet, and hence no failure of type I has happened where $v$ was the exposure vertex. The assumption of $J_v$ being inactive leads to $w_M(J_v)\geq 2pn \ln^{-1}(n)$ by definition. But then, since $f_I(v)=0$, there must have been $2pn \ln^{-1}(n)$ edges incident with $v$ on which the coin tossing was successful, a contradiction. 
\end{proof}

\begin{clm}[Analysing random moves IV]\label{clm:ranmovesIV}
	A.a.s.~the following happens: For every vertex $v \in V(G)$, we maintain
	$f_{II}(v)\leq 0.5 \eps pn\ln^{-1}(n)$. 
\end{clm}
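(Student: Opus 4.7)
We mirror the argument of Claim~3.4 in~\cite{F15Gen}. Fix any vertex $v\in V(G)$; by a union bound it suffices to show that $\Prob(f_{II}(v)>0.5\eps pn\ln^{-1}(n))=o(1/n)$. Split $f_{II}(v)=f_{II}^A(v)+f_{II}^B(v)$, where $f_{II}^A$ counts failures arising in rounds where $v$ is itself the exposure vertex, and $f_{II}^B$ those where $v$ appears as the successful neighbour $\pi(k)$ of some other exposure vertex $w$. The strategy is to bound each expected contribution by $o(\eps pn\ln^{-1}(n))$ and then apply concentration.

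Two ingredients are central. First, combining Theorem~\ref{theorem:MinBox} with the bound $w_M(J_u)<2pn\ln^{-1}(n)$ for an active box $J_u$ yields the sharper estimate
$$d_B(u) \;\le\; 2b\cdot w_M(J_u)+2b(\ln n+1) \;=\; O(pn\ln^{-1}(n)),$$
which is much stronger than the $\eps pn/5$ bound of Claim~\ref{clm:ranmovesII}. Second, conditioning on the game history up to any round in which $u$ is the exposure vertex, the uniformly random ordering of $U_u$ and the coin tosses on the yet-unexposed edges are independent of the past. A short symmetry computation then gives (a)~the conditional probability that the first successful edge lies in $B$ equals $\frac{|B\cap U_u|}{|U_u|}(1-(1-q)^{|U_u|})\le d_B(u)/|U_u|$, and (b)~for any fixed $v'\in U_u$ the conditional probability that $v'$ becomes the first successful neighbour equals $(1-(1-q)^{|U_u|})/|U_u|\le 1/|U_u|$. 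A standard Chernoff argument using Claims~\ref{clm:ranmovesI}--\ref{clm:ranmovesIII} to bound the exposures at $u$ shows that $|U_u|=(1-o(1))pn$ in essentially every exposure round while $J_u$ remains active.

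Applying (a) with $u=v$ for $f_{II}^A(v)$, each of the $N_v^{\exp}=O(pn\ln^{-1}(n))$ exposure rounds of $v$ (Claim~\ref{clm:ranmovesI}) contributes $O(\ln^{-1}(n))$ to the conditional failure probability, yielding $\Exp[f_{II}^A(v)]=O(pn\ln^{-2}(n))$. For $f_{II}^B(v)$, summing (b) over $w\ne v$ and over $w$-exposure rounds with $vw\in B^{(r)}$ (recalling that $vw$ can cause a case~B failure via $w$ in at most one round), an analogous accounting using that among the $O(pn\ln^{-1}(n))$ rounds in which $v$ is a successful neighbour only an $O(\ln^{-1}(n))$-fraction can be ``type II'' (since $|B\cap U_w|/|U_w|\le d_B(w)/|U_w|=O(\ln^{-1}(n))$ while $J_w$ is active) also gives $\Exp[f_{II}^B(v)]=O(pn\ln^{-2}(n))$. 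Both expectations are $o(\eps pn\ln^{-1}(n))$.

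To convert the expectation bounds into high-probability tail bounds we reveal the coin tosses $Y_e$ sequentially, in the order in which their edges are first exposed by Walker. At the moment $Y_e$ is revealed, the event ``$e\in B$ at exposure'' is measurable with respect to the $\sigma$-algebra generated by the coin tosses revealed strictly earlier (since Breaker's choices are determined by already-exposed coins), while $Y_e$ itself remains an independent $\operatorname{Bern}(q)$ random variable relative to that past. A Bernstein- or Chernoff-type bound (Lemma~\ref{lem:Chernoff2} applied to an appropriate binomial dominator, or Azuma--Hoeffding on the corresponding Doob martingale) then yields
$$\Prob\bigl(f_{II}(v)>0.5\eps pn\ln^{-1}(n)\bigr) \;\le\; \exp\bigl(-\Omega(\eps pn\ln^{-1}(n))\bigr) \;=\; o(1/n),$$
using $pn\ge n^{1/3+\eps}$. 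A union bound over $v\in V(G)$ completes the proof. The chief technical obstacle is precisely this adaptivity of Breaker---the set $B_{end}$ depends on the coin outcomes that Walker has revealed---and it is exactly what the sequential coin-reveal coupling decouples.
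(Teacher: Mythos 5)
Your proposal lands on the same core tools as the paper — bounding Breaker's degree at $v$ while $J_v$ is still active, coupling the coin reveals to a binomial despite Breaker's adaptivity, and finishing with Chernoff plus a union bound — but it takes a substantially longer route than the paper does, and one intermediate estimate does not hold as stated. The paper's argument is much shorter: by Claim~\ref{clm:ranmovesIII} a.a.s.\ every edge incident with $v$ is exposed while $J_v$ is still active, and by Claim~\ref{clm:ranmovesII} we then have $d_B(v)\le \eps pn/5$ at every such exposure time; since $f_{II}(v)$ is at most the number of coin successes on edges incident with $v$ that were already Breaker's when exposed, $f_{II}(v)$ is stochastically dominated by $\operatorname{Bin}(\eps pn/5,\ln^{-1}(n))$, and one application of Lemma~\ref{lem:Chernoff1} finishes. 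No split $f_{II}=f_{II}^A+f_{II}^B$, no per-round conditional probabilities, no sharper $O(pn\ln^{-1}(n))$ bound on $d_B$ is needed.

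The genuine gap is the assertion that $|U_u|=(1-o(1))pn$ throughout the active phase. This is what makes your per-round estimate $d_B(u)/|U_u|=O(\ln^{-1}(n))$ work, but it is not justified and is in fact generally false: $U_u$ decreases monotonically from $d_G(u)\approx pn$ all the way to~$\varnothing$ (a.a.s.\ every edge gets exposed, by Claim~\ref{clm:ranmovesIII}), and a single exposure round at $u$ removes a geometric-like number of elements, roughly $\ln n$ on average, over up to $\Theta(pn\ln^{-1}(n))$ rounds. So in the later rounds $|U_u|$ can be $o(pn)$ and the quantity $d_B(u)/|U_u|$ can be $\Omega(1)$, breaking the "each round contributes $O(\ln^{-1}(n))$" count and hence the claimed $\Exp[f_{II}^A(v)]=O(pn\ln^{-2}(n))$. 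Your symmetry identities (a) and (b) are correct, and the final sentence ("Chernoff applied to an appropriate binomial dominator, decoupling adaptivity by revealing coins sequentially") is exactly the right closing move — but to reach a valid binomial dominator you should drop the per-round/expectation detour entirely and instead count as the paper does: every increment of $f_{II}(v)$ corresponds to a distinct edge $e\ni v$ that was a Breaker edge when exposed and whose (independent, Bernoulli$(\ln^{-1}(n))$) coin succeeded, and there are at most $\eps pn/5$ such candidate edges because each is exposed while $J_v$ is active.
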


\begin{proof}
	
	By Claim~\ref{clm:ranmovesIII} we have that a.a.s.~all edges of $H\sim G_{\ln^{-1}(n)}$ get exposed because of Case~1 in Sequence III. Moreover, by Claim~\ref{clm:ranmovesII}, for every $v\in V(H)$ we have
	$d_B(v)\leq \eps pn/5$ as long as the corresponding box $J_v$ is active.
	Since $f_{II}(v)$ counts the number of failures of type II at $v$, i.e.~the number of successes on edges incident with $v$ which have already been claimed by Breaker, we observe that $f_{II}(v)$ is stochastically dominated by $\operatorname{Bin}(\eps pn/5,\ln^{-1}(n))$. Applying a Chernoff inequality (Lemma~\ref{lem:Chernoff1}), we conclude that
	$$
	\mathbb{P}(f_{II}(v) \geq 0.5 \eps pn\cdot \ln^{-1}(n))
	\leq 
	e^{-\eps pn/15\cdot \ln^{-1}(n)}
	\leq
	e^{- n^{1/3}}
	$$
	for large enough $n$. The claim now follows by taking a union bound over all vertices $v$.
\end{proof}

Finally, let us explain why the above statement implies that a.a.s.~\eqref{eq:resilience_condition} holds by the end of the game.
According to Claim~\ref{clm:degrees} (applied for the random graph $H\sim G_{n,p\ln^{-1}(n)}$) we have $d_H(v)\geq (1-\eps)pn\ln^{-1}(n)$ for every $v\in V(G)$. From these at least $(1-\eps)pn\ln^{-1}(n)$ edges a.a.s.~at most $0.5 \eps pn \ln^{-1}(n)$ are failures of type II and therefore Breaker's edges, while the remaining end up in Walker's graph. This gives
$$
d_{H\cap W}(v) \geq d_H(v) - 0.5 \eps pn \ln^{-1}(n) \geq (1-\eps)d_H(v)
$$
for every $v\in V(H)$. Thus, Theorem~\ref{thm:main.general} is proven.\hfill $\Box$

\section{Proof of Main Technical Lemma}\label{sec:technical}

Before we prove Lemma~\ref{lemma:technical}, let us first state and prove two simple claims that we will require later. As before, we do not intend to optimize polylogarithmic factors. Hence, in order to simplify calculations, we may sometimes make generous estimates.

\begin{clm}\label{claim:neighbours}
	Let $\eps>0$, let $n$ be a large enough integer, and let $p=n^{-2/3+\eps}$. Moreover, let $A,B\subset [n]$ be any disjoint sets such that $1\leq |A|\leq p^{-1}$ and $|A||B|> n^{2/3}$. When we reveal the edges of
	$G\sim G_{n,p}$ on vertex set $[n]$, then with probability 
	at least $1-\exp(-0.5\ln^2(n))$ it holds that 
	\begin{equation*}
		|N_G(A)\cap B|=\ln^{\pm 2}(n) p|A||B|\, .
	\end{equation*}
\end{clm}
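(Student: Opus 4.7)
The plan is to write $X := |N_G(A) \cap B| = \sum_{b \in B} X_b$, where $X_b$ is the indicator that $b$ has at least one neighbour in $A$ in $G$. Since $A$ and $B$ are disjoint, the random variables $X_b$ depend on pairwise disjoint sets of potential edges, hence are mutually independent; each is Bernoulli with success probability $q := 1 - (1-p)^{|A|}$. It will therefore suffice to estimate the mean $\mu := \Exp[X] = |B| q$ and then apply a Chernoff bound.

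For the mean, the hypothesis $|A| \leq p^{-1}$ gives $p|A| \leq 1$. Using $(1-p)^{|A|} \leq e^{-p|A|}$ together with the elementary inequality $1 - e^{-x} \geq x/2$ valid for $x \in [0,1]$, as well as Bernoulli's inequality $(1-p)^{|A|} \geq 1 - p|A|$, I would deduce
\[ \tfrac{1}{2}\, p|A| \;\leq\; q \;\leq\; p|A|, \]
so that $\mu$ lies in the interval $[\tfrac{1}{2} p|A||B|,\, p|A||B|]$. Combining the hypothesis $|A||B| > n^{2/3}$ with $p = n^{-2/3+\eps}$ then gives $\mu \geq \tfrac{1}{2}\, n^{\eps}$, which dominates any fixed power of $\ln n$ once $n$ is large.

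For the concentration, I would apply Lemma~\ref{lem:Chernoff1} with deviation parameter $\delta = 1/2$ to obtain
\[ \Prob\bigl(|X - \mu| > \mu/2\bigr) \;\leq\; 2\exp\!\bigl(-\mu/12\bigr) \;\leq\; \exp\!\bigl(-n^{\eps}/24\bigr), \]
which is comfortably smaller than $\exp(-0.5\ln^2(n))$ for $n$ large. On the complementary event, $X \in [\mu/2, 3\mu/2] \subseteq \bigl[\tfrac{1}{4} p|A||B|,\, \tfrac{3}{2} p|A||B|\bigr]$, and this interval is contained in $[\ln^{-2}(n)\cdot p|A||B|,\, \ln^2(n)\cdot p|A||B|]$ once $n$ is large enough. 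This yields the claim.

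I do not anticipate a genuine obstacle here; the only subtlety worth noting is that when $p|A|$ is of constant order, the per-vertex probability $q$ is only within a bounded multiplicative constant of $p|A|$ rather than asymptotic to it, which is precisely why the conclusion is stated with the polylogarithmic slack $\ln^{\pm 2}(n)$ rather than as a sharp asymptotic.
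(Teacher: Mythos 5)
Your argument is correct and takes a genuinely different route from the paper's. The paper bounds $|N_G(A)\cap B|$ indirectly by sandwiching it between $e_G(A,B)/\max_{v\in B} d_G(v,A)$ and $e_G(A,B)$: it applies Chernoff to $e_G(A,B)\sim\mathrm{Bin}(|A||B|,p)$ to get a $(1\pm\delta)$-multiplicative bound, and separately uses Chernoff plus a union bound to show $\max_{v\in B}d_G(v,A)\leq 0.9\ln^2(n)$ (this is where the $\ln^{\pm 2}(n)$ slack in the claim comes from, since dividing by the max degree costs a polylog factor even though typical degrees are $O(1)$). You instead observe directly that $|N_G(A)\cap B|=\sum_{b\in B}X_b$ is a sum of independent Bernoulli$(q)$ indicators because $A$ and $B$ are disjoint, pin down $q=1-(1-p)^{|A|}\in[\tfrac12 p|A|,\,p|A|]$ using $p|A|\leq 1$, and apply Chernoff once to $X$ itself. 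This is more direct and in fact yields a stronger constant-factor concentration, so the polylogarithmic slack in the statement is automatically absorbed rather than being structurally necessary, as your closing remark correctly anticipates. One small technical note: the paper's Lemma~\ref{lem:Chernoff1} gives the upper tail only for $\delta\geq 1$, so as stated you cannot invoke it with $\delta=\tfrac12$ on both sides; either apply it with $\delta=1$ for the upper tail (giving $\Prob(X>2\mu)<\exp(-\mu/3)$, which is still amply within the required window), or cite the standard two-sided multiplicative Chernoff bound for $\delta\in(0,1)$. This is a citation matter rather than a gap in the argument.
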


\begin{proof}
	Let $\delta\in (0,0.1)$, and whenever necessary assume $n$ to be large enough.
	
	At first, we observe that $e_G(A,B)\sim Bin(|A||B|,p)$ with expectation $\Exp(e_G(A,B))=|A||B|p$. Hence, using Chernoff (Lemma~\ref{lem:Chernoff1}) 
	and $|A||B|p>n^{\eps}$, we get 
	with probability at least $1-\exp(-n^{\eps/2})$ that
	\begin{equation}\label{eqn:5.1}
		e_G(A,B)=(1\pm \delta) |A||B|p\, .
	\end{equation}

	Furthermore, for every
	$v\in B$ we have $d_G(v,A)\sim Bin(|A|,p)$ and
	$\Exp(d_G(v,A))=p|A|\leq 1$. Hence, applying Chernoff (Lemma~\ref{lem:Chernoff2}) and union bound, we see that
	with probability at least $1-n\exp(-0.9\ln^2(n))$, 
	\begin{equation}\label{eqn:5.2}
		d_G(v,A)\leq 0.9\ln^2(n) ~~ \text{for every }v\in B\, .
	\end{equation}

	The probability that at least one of the above events fails
	can be bounded with $\exp(-0.5\ln^2(n))$ by the union bound. Hence, we may assume that both equations~\ref{eqn:5.1} and \ref{eqn:5.2} hold. Then, using that
	$$
	\frac{e_G(A,B)}{\max_{v\in B} d_G(v,A)} \leq |N_G(A)\cap B| \leq e_G(A,B)\, ,
	$$
	the claim follows.
\end{proof}

\begin{clm}\label{claim:nextlevel}
	Let $\delta,\eps>0$, $n$ be a large enough integer, and $p=n^{-2/3+\eps}$. Moreover, let $M_1$, $M_2$, $M_3$, $B_1^{\ast}$, $B_2^{\ast}$, $B_3^{\ast}$, $B\subset [n]$ be any disjoint sets such that $1\leq |M_j|\leq n^{1/3-3\eps}$ and $|B|,|B_j^\ast|\geq \delta n$ for every $j\in [3]$. When we reveal the edges of
	$G\sim G_{n,p}$ between these seven sets, then with probability 
	at least $1-\exp(-0.4\ln^2(n))$ the following holds.
	
	\begin{enumerate}
		\item[(P)] Let $C\subseteq B$ be the set of vertices $b\in B$ such that for every $j\in [3]$ there exists a path $(b,y_j,z_j)$ with $y_j\in B_j^\ast$ and $z_j\in M_j$. Then
		$
		|C| = \ln^{\pm 13}(n) n^{6\eps} \prod_{j\in [3]} |M_j|\, .
		$
	\end{enumerate}
\end{clm}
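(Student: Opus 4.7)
The plan is a two-stage exposure. First, I would reveal the edges between each $M_j$ and $B_j^*$ to obtain the intermediate neighbourhoods $N_j := N_G(M_j) \cap B_j^*$ for $j \in [3]$. Then, conditional on $(N_1, N_2, N_3)$, I would reveal the remaining edges from $B$ into $B_1^* \cup B_2^* \cup B_3^*$; an element $b \in B$ lies in $C$ precisely when, for every $j \in [3]$, $b$ has at least one neighbour in $N_j$.

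For the first stage I would apply Claim \ref{claim:neighbours} three times, once per $j$, with $A = M_j$ and the outer set equal to $B_j^*$. The two hypotheses of that claim are satisfied: $|M_j| \leq n^{1/3-3\eps} \leq p^{-1}$ since $\eps>0$, and $|M_j||B_j^*| \geq \delta n > n^{2/3}$ for $n$ large. By a union bound, except with probability $3\exp(-0.5\ln^2(n))$, all three sizes satisfy $|N_j| = \ln^{\pm 2}(n)\, p|M_j||B_j^*|$, and I would condition on this event from here on.

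The key observation in the second stage is that, because the seven sets are pairwise disjoint, for each $b \in B$ and $j \in [3]$ the event that $b$ has a neighbour in $N_j$ depends only on the random edges between $\{b\}$ and $B_j^*$, and these events are therefore mutually independent across both $b$ and $j$. Writing $q_j := 1 - (1-p)^{|N_j|}$, this gives
$$|C| \mid (N_1,N_2,N_3) \;\sim\; \operatorname{Bin}\bigl(|B|,\, q_1 q_2 q_3\bigr).$$
Since $|N_j|p \leq \ln^2(n)\, p^2|M_j||B_j^*| \leq \ln^2(n)\, n^{-\eps} = o(1)$, a Taylor expansion gives $q_j = (1\pm o(1))|N_j|p$, so the conditional mean is
$$\mu := |B|\, q_1 q_2 q_3 = \ln^{\pm 7}(n)\, p^6\, |B| \prod_{j\in[3]} |M_j||B_j^*| = \ln^{\pm 8}(n)\, n^{6\eps} \prod_{j\in[3]} |M_j|,$$
where I used $p^6 = n^{-4+6\eps}$ and absorbed the bounded constants $|B|/n,\, |B_j^*|/n \in [\delta,1]$ into one further logarithmic factor.

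Finally, since $\mu \geq \ln^{-8}(n)\, n^{6\eps}$ grows polynomially in $n$, Lemma \ref{lem:Chernoff1} would yield $|C| \in [\mu/2,\, 2\mu]$ except with probability $\exp(-\Omega(\mu))$, which is much smaller than $\exp(-0.5\ln^2(n))$. A union bound with the first-stage failure gives the overall bound $\exp(-0.4\ln^2(n))$, and the resulting estimate comfortably sits inside the $\ln^{\pm 13}(n)$ envelope claimed. The only delicate part is the book-keeping of polylogarithmic factors, coming from three applications of Claim \ref{claim:neighbours}, the Taylor slack, the absorbed constants $\delta^{\pm 4}$, and the Chernoff deviation; but the generous exponent $13$ is engineered precisely to make these merge without effort.
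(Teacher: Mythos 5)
Your proof is correct, and it takes a genuinely different route from the paper's. After the shared first stage (revealing $M_j$--$B_j^\ast$ edges and applying Claim~\ref{claim:neighbours} to control $|N_j|$), the paper \emph{iterates} Claim~\ref{claim:neighbours} three more times along a nested chain $B\supseteq B_1\supseteq B_2\supseteq C$, revealing $A_1$--$B$, then $A_2$--$B_1$, then $A_3$--$B_2$; this is modular but requires checking at each step that the shrinking sets still satisfy the size hypotheses of Claim~\ref{claim:neighbours} (e.g.\ $|B_1|>n^{2/3}$, $|A_3||B_2|>n^{2/3}$). You instead observe the conditional independence structure directly: conditioned on $(N_1,N_2,N_3)$, the events $\{b\in C\}$ depend on pairwise disjoint edge sets across both $b\in B$ and $j\in[3]$, so $|C|$ is exactly $\operatorname{Bin}(|B|,q_1q_2q_3)$ with $q_j=1-(1-p)^{|N_j|}$, and a single Chernoff bound finishes. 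What this buys is a cleaner argument with no intermediate size-condition bookkeeping; what it requires instead is the check that $|N_j|p=o(1)$ so that the Taylor expansion $q_j=(1\pm o(1))|N_j|p$ is legitimate, which you verify correctly ($|N_j|p\le \ln^2(n)\,p^2|M_j||B_j^\ast|\le \ln^2(n)n^{-\eps}$). The resulting polylog accounting ($\ln^{\pm 8}(n)$ versus the paper's $\ln^{\pm 13}(n)$) is in fact slightly tighter, and both comfortably fit the stated envelope.
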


\begin{proof}
	Whenever necessary assume $n$ to be large enough.
	
	We may write $A_j:=N_G(M_j)\cap B_j^\ast$. Then $C$ is the set of vertices $b\in B$ that have a neighbour in each of the sets $A_j$ with $j\in [3]$. In order to obtain estimates on $|C|$ we may apply Claim~\ref{claim:neighbours} repeatedly. For this, we first reveal the edges of $G\sim G_{n,p}$ between $M_j$ and $B_j^\ast$ for each $j\in [3]$. Then Claim~\ref{claim:neighbours} (applied with sets $M_j$ and $B_j^\ast$) yields that it is likely to have
	\begin{equation}\label{eq:nextlevel:step0}
		|A_j|=\ln^{\pm 2}(n) p|M_j||B_j^\ast|\, 
	\end{equation}
	for every  $j\in [3]$. We may condition on these events and note that then 
	\begin{equation*}
		n^{1/3}<|A_j|< p^{-1}
	\end{equation*}
	holds. Next, we proceed as follows: in a first step we reveal the edges between  $A_1$ and $B$ to find $B_1:=N_G(A_1)\cap B$;
	in a second step we reveal the edges between
	$A_2$ and $B_1$ to find $B_2:=N_G(A_2)\cap B_1$;
	and in a third step we reveal the edges between
	$A_3$ and $B_2$ to find $C=N_G(A_3)\cap B_2$.
	Each time we can apply Claim~\ref{claim:neighbours},
	provided that the two relevant sets satisfy the size conditions
	given by Claim~\ref{claim:neighbours}. 
	
	In the first step, applying Claim~\ref{claim:neighbours} with sets $A_1$ and $B$, we then obtain that is likely to have
	\begin{equation}\label{eq:nextlevel:step1}
		|B_1|=\ln^{\pm 2}(n) p|A_1||B|
	\end{equation}
	which, using $|A_1|>n^{1/3}$, gives $|B_1|>n^{2/3}$.
	If we condition on this, then $A_2$ and $B_1$ satisfy the size conditions of Claim~\ref{claim:neighbours}, and hence,
	in the second step we obtain that is likely to have
	\begin{equation}\label{eq:nextlevel:step2}
		|B_2|=\ln^{\pm 2}(n) p|A_2||B_1|\, .
	\end{equation}
	In particular, using $|A_2|,|A_3|>n^{1/3}$, we then have 
	$|A_3||B_2|>n^{2/3}$. So, if we condition on this,
	then $A_3$ and $B_2$ satisfy the size conditions of Claim~\ref{claim:neighbours}, and for the third step it is likely 
	to have
	\begin{equation}\label{eq:nextlevel:step3}
		|C|=\ln^{\pm 2}(n) p|A_3||B_2|\, .
	\end{equation}
	
	By Claim~\ref{claim:neighbours} and the union bound, 
	with probability at least $1-\exp(-0.4\ln^2(n))$,
	all of the events described by \eqref{eq:nextlevel:step0}-\eqref{eq:nextlevel:step3} hold at the same time. 
	Putting everything together we get
	\begin{align} 
		|C| & \stackrel{\eqref{eq:nextlevel:step3}-\eqref{eq:nextlevel:step1}}{=} \ln^{\pm 6}(n) p^3 |B| \prod_{j\in [3]} |A_j| 
		\stackrel{\eqref{eq:nextlevel:step0}}{=} \ln^{\pm 12}(n) p^6 |B| \prod_{j\in [3]} |M_j||B_j^\ast|  \label{eq:nextlevel:step4} \\
		& = \ln^{\pm 13}(n) p^6 n^4 \prod_{j\in [3]} |M_j| 
		= \ln^{\pm 13}(n) n^{6\eps} \prod_{j\in [3]} |M_j| \nonumber
	\end{align}
	which is as claimed.
\end{proof}

\subsection{Main proof} We now prove Lemma~\ref{lemma:technical}.
Let $\eps\in\mathcal{R}$ be  given and set $k:=\log_3(2\eps^{-1}+12)-2$. Let $V=[n]$ be the vertex set, and before revealing any edges of $G_{n,p}$, do the following: fix an arbitrary vertex $a\in [n]$ and an equipartition $V\setminus \{a\}=V_1\cup V_2$, as well as
families $\mathcal{B}_t=\left\{B_t(s):~ s\in V(\cS_k-s_0)\right\}$ of pairwise disjoint vertex subsets of $V_t$ each of size $\frac{n}{3^{k+10}}$. Furthermore set 
$V(\mathcal{B}_t)=\bigcup_{B\in \mathcal{B}_t} B$ for $t\in [2]$,
and set
$$R = V\setminus (V(\mathcal{B}_1)\cup V(\mathcal{B}_2) \cup \{a\})\, .$$

We now start revealing the edges of $G\sim G_{n,p}$ and
prove that a.a.s.~all properties listed in Lemma~\ref{lemma:technical} hold.

\underline{\bf Property (S):} By assumption we have
$|B_t(s)|=\frac{n}{3^{k+10}}$ for every $s\in V(\cS_k-s_0)$ 
and $t\in [2]$. Since $v(\mathcal{S}_k) = 2\cdot 3^{k}- 1$ by Claim~\ref{claim:structure_size}, it follows that $|R|\geq \frac{n}{2}$. Moreover, by revealing only the edges between $a$ and $R$, we have $|N(a,R)|\sim$Bin$(|R|,p)$ with expectation at least $\frac{1}{2}n^{1/3+\eps}$.
Hence, by Chernoff (Lemma~\ref{lem:Chernoff1}), Property (S) fails with probability at most $\exp(-0.1 n^{1/3+\eps})$.

\underline{\bf Property (C):} Let $t\in [2]$ and $x\in V_{3-t}$. We prove that for this particular choice of $t$ and $x$, the described property in (C) fails with probability at most $\exp(-0.3 \ln^2(n))$. Thereafter, the union bound concludes the argument.

Note that for this property we only need to reveal the edges in $V(\mathcal{B}_t)$ and the edges between $x$ and boxes $B_t(s)$ with $s\in L^\ast_0$, and hence checking this property is independent of (S). 
Set $C^x(s_0)=\{x\}$. Starting from this candidate set,
we find the candidate sets $C^x(s) \subseteq B_t(s)$ with $s\in L_{\ell}$ and $\ell\in [k]$ iteratively, starting with $L_1$, then $L_2$ and so on, always only revealing the edges which are needed.

To be more precise, one step in the iteration looks as follows: Let $\ell\in [k]$ and assume we have already fixed the candidate sets $C^x(s)$ with $s\in L_{\ell -1}$. Then we reveal all edges between boxes $B_t(s)$ with $s\in L_{\ell -1}\cup L_{\ell -1}^\ast \cup L_{\ell}$. For every $i\in [3^{k-\ell}]$ we then let $C^x(s_{\ell,i})$ be the set of all vertices $v \in B_t(s_{\ell,i})$ such that for every $j \in \{0,1,2\}$ it holds that there exists a path $(v,y_j,z_j)$ with $y_j \in B_t(s^*_{\ell-1,3i-j})$ and $z_j \in C^x(s_{\ell-1,3i-j})$. Afterwards we let $C^x(s^*_{\ell-1,3i-j})$ be the union over of all $v_j$ which appear in such a path.

Properties $(C2)$ and $(C3)$ immediately hold by definition. To show 
that $(C1)$ is likely to hold, we apply Claim~\ref{claim:nextlevel} (with $\delta = 1/3^{k+10}$) along the iteration. That is, in the $\ell$-th step of the above iteration, having $i\in [3^{k-\ell}]$, we set $M_j = C^x(s_{\ell-1,3i-j})$, $B^*_j = B_t(s^*_{\ell-1,3i-j})$, and $B = B_t(s_{\ell,i})$. By Claim~\ref{claim:nextlevel} it is then likely to have
\begin{equation}\label{eq:candidate:recursion}
	|C^x(s_{\ell,i})|=\ln^{\pm 13}(n) n^{6\eps} \prod_{j\in [3]} |C^x(s_{\ell-1,3i-j})|
\end{equation}
from which we may conclude inductively that 
\begin{equation}\label{eq:candidate:explicit}
	|C^x(s_{\ell,i})|=n^{(3^{\ell+1}-3)\eps} \ln^{\pm 3^{3\ell}}(n)\, .
\end{equation}
Note that in this whole iteration the above application of Claim~\ref{claim:nextlevel} happens exactly once for each vertex $s\in V(\mathcal{S}_k-s_0)$. Thus, the probability that \eqref{eq:candidate:recursion} and hence \eqref{eq:candidate:explicit} fail for some $(\ell,i)\in I(k)$ is bounded by $v(\mathcal{S}_k)\cdot \exp(-0.4 \ln^2(n)) < \exp(-0.3 \ln^2(n))$, provided that each time when we apply Claim~\ref{claim:nextlevel} the required condition on $|M_j|$ holds. 
But now, using \eqref{eq:candidate:explicit} along the way, this condition is guaranteed since for every $\ell\in [k]$ and $s\in L_{\ell-1}$ the likely size of $C^x(s)$ is bounded by
$$
n^{(3^{\ell}-3)\eps} \ln^{3^{3\ell-3}}(n)\leq
n^{(3^{k}-3)\eps} \ln^{3^{3\ell-3}}(n)
<
n^{2/9}
$$
by the choice of $k$, and provided that $n$ is large enough.

\underline{\bf Property (E):} Let $t\in [2]$ and $\ell\in [k]$, and fix any edge $e$ that appears below level $\ell$ with respect to $\mathcal{B}_t$. For this fixed edge, we may prove that (E1) or (E2) fail with probability at most  $\exp(-0.5\ln^2 n)$.
Hence, applying union bound over all edges, (E) fails 
with probability at most  $\exp(-0.4\ln^2 n)$.

By Definition~\ref{dfn:structures.appearance}, the edge $e$ can only see a vertex $x\in V_{3-t}$ if contained in a $(\mathcal{B}_t,x)$-structure. Hence, we may assume that there exist adjacent vertices $s,s'\in V(\mathcal{S}_k-s_0)$ such that $e$ intersects both $B_t(s)$ and $B_t(s')$, and such that $s\in L^\ast_{\ell'}$ for some $\ell'\leq \ell$. If $s'=x$, then $x$ would be the only vertex in $V_{3-t}$ that $e$ could see, and hence the bounds in (E1) and (E2) would hold trivially. Therefore, we may assume further that $e\cap V_{3-t}=\varnothing$. 
Now, for such $e$ we have that if $e$ sees a vertex $x \in V_{3-t}$, then there must be subgraph $S\subseteq \mathcal{S}_k$ isomorphic to $\mathcal{S}_\ell - s_0$ s.t. the following properties hold:
\begin{itemize}
	\item[(a)] $e \in E(S)$ and the vertices of $e$ are the copies of $s$ and $s'$,
	\item[(b)] $V(S) \subseteq V(\mathcal{B}_t)$, and
	\item[(c)] each leaf of $S$ is a neighbour of $x$.
\end{itemize}

Note that the properties (a) and (b) depend on edges in $E(V(\mathcal{B}_t))$ only, while (c) depends on edges in $E(V(\mathcal{B}_t),V_{3-t})$. So, we first can expose all edges in $E(V(\mathcal{B}_t))$ and give an upper bound on the number of possible structures $S$ satisfying both (a) and (b). Afterwards we may expose the edges in $E(V(\mathcal{B}_t),V_{3-t})$ and bound the number of vertices $x$ such that (c) is satisfied.

For the first part in turns out that it suffices to have a reasonable upper bound on the degrees in $V(\mathcal{B}_t)$. If we reveal the edges in $E(V(\mathcal{B}_t))$, then by a simple Chernoff argument we have that with probability at least $1 - \exp(-n^{1/3})$ every vertex 
$v \in V(\mathcal{B}_t)$ satisfies $d(v,V(\mathcal{B}_t)) \leq np$.
If we condition on this bound and use $v(S_\ell - s_0) = 2 \cdot 3^{\ell} - 2$ from Claim~\ref{claim:structure_size}, it follows that
there are at most 
$$(np)^{v(S_\ell - s_0) - 2} = (np)^{2 \cdot 3^\ell - 4}$$ 
copies $S$ of $S_\ell - s_0$ fulfilling properties (a) and (b).
Now, condition on this event to hold and expose the edges between $V(\mathcal{B}_t)$ and $V_{3-t}$. Then for any copy $S$ with properties (a) and (b) and any vertex $x \in V_{3-t}$ it holds that 
$$\mathbb{P}(\text{all leaves of }S\text{ are neighbours of }x) =
p^{|L_0^\ast|} = p^{3^\ell}.$$ 
Taking a union bound over all relevant copies of $S$, we then conclude that
$$\mathbb{P} \big( \exists S \text{ with properties (a),(b): all leaves of }S\text{ are in }N(x)\big) \leq (np)^{2\cdot 3^\ell - 4} \cdot p^{3^\ell} =: p^\ast\, .$$
Since these events are independent for all $x \in V_{3-t}$, it follows that the random variable
$$
X_e := |\{ x \in V_{3-t}: \exists S \text{ with properties (a),(b) such that all leaves of }S\text{ are in }N(x) \}|
$$
is stochastically dominated by the random variable Bin$(|V_{3-t}|,p^\ast) =: Y_e$. The expectation of $Y_e$ is
$$
\mathbb{E}[Y_e] = |V_{3-t}| p^* \leq n \cdot (np)^{2\cdot 3^\ell - 4} \cdot p^{3^\ell} = n^{\alpha(\ell)}
$$
with $\alpha(\ell) = 1 + (2 \cdot 3^\ell - 4) + (-\frac23 + \eps)\cdot(3 \cdot 3^\ell - 4) = - \frac{1}{3} + \eps \cdot (3^{\ell + 1} - 4)$.
In light of (E1) and (E2), we now distinguish two cases. 

{\bf Case (E1): $e$ is below level $k-1$ ($\ell \leq k-1$).}
In this case we have 
$\alpha(\ell) \leq  -\frac13 + \eps \cdot (3^{k} - 4) \leq -\frac19$
by our choice of $k$. In particular, we have $\mathbb{E}[Y_e] = o(1)$ and using Chernoff (Lemma~\ref{lem:Chernoff2}) we get
$$
\mathbb{P}(Y_e \geq \ln^2(n)) \leq e^{-\ln^2(n)} 
~~ \Rightarrow ~~
\mathbb{P}(X_e \geq \ln^2(n)) \leq e^{-\ln^2(n)}\, .
$$
Hence, $e$ fails (E1) with probability at most $\exp(-n^{1/3})+\exp(-\ln^2 n) < \exp(-0.5\ln^2 n)$.

{\bf Case (E2): $e$ is between level $k-1$ and $k$ ($\ell = k-1$).}
In this case we have 
$\alpha(\ell) = -\frac13 + \eps \cdot (3^{k+1} - 4) = \frac13$
by our choice of $k$.
In particular, we have $\mathbb{E}[Y_e] = n^{1/3}$ and using Chernoff (Lemma~\ref{lem:Chernoff1}) we get
$$
\mathbb{P}(Y_e \geq n^{1/3 + 0.1 \eps}) \leq e^{-n^{1/3}}
~~ \Rightarrow ~~
\mathbb{P}(X_e \geq n^{1/3 + 0.1 \eps}) \leq e^{-n^{1/3}}\, .
$$
Hence, $e$ fails (E2) with probability at most $\exp(-n^{1/3})+\exp(-n^{1/3}) = 2\exp(-n^{1/3})$.

\smallskip

\underline{\bf Property (R):} 
In order to verify property (R), we first need a bound on the number of candidates in $B_t(s_{k,1})$ for a given vertex $x\in V_{3-t}$, for $t\in[2]$, that belong to a $(\mathcal{B}_t,x)$-structure that
also contains a fixed vertex $v$. For this, we define
$$
c_{\ell} :=
\begin{cases}
	n^{(3^{k+1}-4)\eps}~ , & ~ \text{if }\ell\neq k-1\\
	n^{1/9+4\eps} ~ , & ~ \text{if }\ell =   k-1\, .
\end{cases}
$$
Then the following claim holds.

\begin{clm}\label{clm:technical:contain.v}
	With probability at least $1-\exp(-0.1\ln^2(n))$
	the following property holds:
	For every $t\in [2]$, $x\in V_{3-t}$ and every $v\in B_t(s^\ast_{\ell,i})$, with $\ell\in [k-1]$ and $i\in [3^{k-i}]$,
	there are at most $c_{\ell}$ candidates in in $B_t(s_{k,1})$
	which belong to a $(\mathcal{B}_t,x)$-structure 
	that also contains $v$.
\end{clm}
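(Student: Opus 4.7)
Fix a tuple $(t, x, \ell, i, v)$ with $t \in [2]$, $x \in V_{3-t}$, $\ell \in [k-1]$, $i \in [3^{k-\ell}]$, and $v \in B_t(s^*_{\ell,i})$, and let $N_v$ denote the number of $u \in B_t(s_{k,1})$ lying in some $(\cB_t, x)$-structure also containing $v$. The plan is to show that $\Prob(N_v > c_\ell) \leq \exp(-\ln^3(n))$ and then conclude via a union bound over the $O(n^2)$ such tuples.

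The key structural observation is that any such $(\cB_t, x)$-structure embeds the unique $\cS_k$-path from $s^*_{\ell, i}$ to $s_{k,1}$: starting at $v$, passing alternately through $w_m \in B_t(s_{m, j_m})$ at each main level $m = \ell+1, \ldots, k$ and $w^*_m \in B_t(s^*_{m, j^*_m})$ at each secondary level $m = \ell+1, \ldots, k-1$, and ending at $u = w_k$; the indices $j_m, j^*_m$ are uniquely determined by $i$. Moreover, since the enclosing structure is a valid $(\cB_t, x)$-embedding, each intermediate $w_m$ must lie in $C^x(s_{m, j_m})$ (its remaining two branches must extend to $x$), likewise $w^*_m \in C^x(s^*_{m, j^*_m})$, and in particular $u \in C^x(s_{k,1})$. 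Hence $N_v$ is at most the number of such valid up-paths from $v$.

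To estimate the expected number of valid up-paths, condition on the typical sizes of the $C^x$-sets predicted by Claim~\ref{claim:nextlevel}, supplemented by a companion estimate for the secondary-level ones: one further application of Claim~\ref{claim:neighbours} yields $|C^x(s^*_{m,\cdot})| \approx |B_t(s^*_{m,\cdot})| \cdot p\,|C^x(s_{m,\cdot})|$, because the additional adjacency condition towards $C^x(s_{m+1,\cdot})$ is automatic whenever $p\,|C^x(s_{m+1,\cdot})| \gg 1$. In the delicate case $\ell = k-1$, plugging in $|C^x(s^*_{k-1,\cdot})| \approx n^{5/9 - 2\eps/3}$ gives
\[
\Exp[N_v] \;\lesssim\; |B_t(s_{k,1})| \cdot p \cdot \bigl(p\,|C^x(s^*_{k-1,\cdot})|\bigr)^{2} \;\approx\; n^{1/9 + 5\eps/3}\, ,
\]
which is smaller than $c_{k-1} = n^{1/9 + 4\eps}$ by a factor of $n^{\Omega(\eps)}$. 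For $\ell \leq k-2$, each extra level along the up-path introduces another factor of $p\,|C^x(\cdot)| = n^{-\Omega(1)}$, so $\Exp[N_v] \leq n^{2/3 - \Omega(1)}$, comfortably below $c_\ell = n^{(3^{k+1}-4)\eps} = n^{2/3}$.

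The main obstacle is the concentration step: the events ``$u$ is a candidate'' for $u \in B_t(s_{k,1})$ are not fully independent, since the $C^x$-sets and the up-path edges share random edges of $G$. I propose to reveal edges in an order mirroring the iterative construction of the $C^x$-sets in the proof of Property~(C), reserving the edges incident with $v$ for the very last step; conditional on the (high-probability) typical $C^x$-sizes, the remaining candidate indicators for distinct $u$ are either independent Bernoullis (when one counts direct up-paths in $G$) or are stochastically dominated by such a sum, and hence Lemma~\ref{lem:Chernoff2} applied with $c_\ell \geq 7\,\Exp[N_v]$ gives $\Prob(N_v > c_\ell) \leq \exp(-c_\ell) \leq \exp(-\ln^3(n))$. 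The final union bound over the $O(n^2)$ tuples then yields overall failure probability at most $\exp(-0.1\ln^2(n))$, as required.
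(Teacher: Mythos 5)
The structural observation is sound — any $(\cB_t,x)$-structure containing $v$ forces an ``up-path'' from $v$ to the root-vertex $u$, with each intermediate main-level vertex lying in the corresponding candidate set $C^x(\cdot)$. Your expectation computation for $\ell=k-1$ also checks out and matches the paper's numbers. However, there is a genuine gap in the concentration step, and it concerns exactly the case $\ell\leq k-2$ where the argument has to do real work.

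For $\ell\leq k-2$ the up-path from $v$ to $u$ has intermediate vertices $w_{\ell+1},w^*_{\ell+1},\dots,w^*_{k-1}$. Two candidate vertices $u_1,u_2\in B_t(s_{k,1})$ can share those intermediate vertices and edges, so the indicator events ``$u_j$ admits a valid up-path to $v$'' are \emph{not} determined by disjoint edge sets, even after revealing the edges incident with $v$ last. You assert that the indicators are ``stochastically dominated by'' a binomial and then invoke Lemma~\ref{lem:Chernoff2}, but no argument for this domination is given and I do not see why it would hold: for example, a single $w^*_{k-1}$ with atypically many neighbours in $B_t(s_{k,1})$ simultaneously makes many $u$'s valid. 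If you try to repair this by conditioning on everything outside $N(v)$ and then looking at $N(v)\cap C^x(s_{\ell+1,\cdot})$, you end up having to control, for each such neighbour $w$, the number $N_w$ of roots reachable through $w$ — which is the same quantity one level higher, and you are back at a level-by-level recursion. This is precisely what the paper's proof does: it introduces the modified candidate sets $C^x_v(s)$ and bounds their sizes iteratively, applying Chernoff once per level on freshly revealed edges, and then propagates the shrinkage factor $n^{-2\eps}$ up the tree via the recursion~\eqref{eq:candidate:recursion}. Your ``single global Chernoff at the top'' collapses this recursion into one step and the independence it needs is not there.

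Two smaller points. First, your expectation heuristic for $\ell\leq k-2$ (``each extra level multiplies by $p\,|C^x(\cdot)|$, giving $n^{2/3-\Omega(1)}$'') is imprecise and gives misleading numbers: the actual multiplicative factor per level is $p^2n\,|C^x(s_{\ell+1,\cdot})|$, which is indeed $<1$, but it yields $\Exp[N_v]\approx n^{2\eps}$ at $\ell=k-2$ and still smaller for lower $\ell$ — far below $n^{2/3-\Omega(1)}$. The conclusion that the expectation is below $c_\ell$ happens to be true, but the reasoning does not support the quantitative claim you state. Second, you plug in $C^x(s^*_{k-1,\cdot})$, which as defined in the paper also imposes an \emph{upward} adjacency condition to $C^x(s_{k,1})$ and is therefore itself a function of the edges you are about to expose to count the $u$'s; the paper sidesteps exactly this circularity by working with the larger sets $A_i=N(C^x_v(s_{\ell,i}))\cap B_t(s^*_{\ell,i})$ that only carry the downward condition. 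Using $C^x(s^*_{k-1,\cdot})$ rather than $A_i$ is not merely a notational shortcut — it is what would break the independence/ordering argument even in the $\ell=k-1$ case if you did not silently switch to the $A_i$'s.

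In short: the skeleton (up-path observation, union bound, Chernoff at the end) is aligned with the paper, but the step that makes the proof work — bounding the restricted candidate sets level by level and propagating the shrinkage — is missing, and the one-shot concentration claim that replaces it is unsupported for $\ell\leq k-2$.
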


Before we prove the above claim, let us explain why Property~(R) follows. If we condition on (C) and the good event from Claim~\ref{clm:technical:contain.v}, then we observe the following:
Let any $t\in [2]$, $x\in V_{3-t}$ and any edge sets $Z_1$ and $Z_2$ be given as described in (R) such that 
$|Z_1|=\ln^4(n)$ and $|Z_2|=n^{1/3+\eps/2}$. Then 
the number of vertices in $B_t(s_{k,1})$ for which an edge of 
$Z_1\cup Z_2$ may be relevant with respect to $(\mathcal{B}_t,x)$
is bounded by
$$
|Z_1|\cdot n^{(3^{k+1}-4)\eps} + |Z_2|\cdot n^{1/9+4\eps} = o(|C^x|)\, .
$$
In particular, we then conclude that 
$$
|C^x[Z_1,Z_2]|=(1-o(1))|C^x| > n^{2/3+0.9\eps}\, .
$$
Now, condition on this and notice that for verifying (C) and Claim~\ref{clm:technical:contain.v} we did not need to expose the edges
between $R$ and $B_1(s_{k,1})\cup B_2(s_{k,1})$. Hence, we can expose them now. Then, for each choice of $t,x,Z_1,Z_2$ as described above and each $A\subseteq N(a,R)$ of size $n^{1/3}$ we have that the random variable $e_{G}\big(A,C^x[Z_1,Z_2]\big)$ has distribution 
Bin$(|A||C^x[Z_1,Z_2]|,p)$ with expectation 
\begin{align*}
	\Exp\left(e_{G}\big(A,C^x[Z_1,Z_2]\big)\right)
	= |A| |C^x[Z_1,Z_2]| p
	> n^{1/3+1.9\eps}.  
\end{align*}
Using Chernoff (Lemma~\ref{lem:Chernoff2}) we obtain that the desired inequality $e_{G}\big(A,C^x[Z_1,Z_2]\big) \geq n^{1/3+1.5\eps}$
fails with probability at most $\exp(-n^{1/3+1.8\eps})$.
Hence, taking a union bound over all possible choices of $t$, $x$, $A$, $Z_1$ and $Z_2$, we see that (R) fails to hold with probability at most
\begin{align*}
	& n\cdot \binom{|N(a,R)|}{n^{1/3}}\cdot \binom{n^2}{\ln^{4}(n)}\cdot \binom{n^2}{n^{1/3+\eps/2}}\cdot  e^{-n^{1/3+1.8\eps}} \\
	\leq &
	\exp\left( \left(1+n^{1/3}+2\ln^{4}(n)+2n^{1/3+\eps/2}\right)\ln(n)
	-n^{1/3+1.8\eps}\right) = o(1)\, .
\end{align*}

Thus, it remains to prove Claim~\ref{clm:technical:contain.v}. As its proof is very similar to that of (C), we only provide a sketch here. Let vertices $x$ and $v$ be given as described by Claim~\ref{clm:technical:contain.v}.
We shall prove that for this particular choice of $x$ and $v$, the described property in Claim~\ref{clm:technical:contain.v} fails with probability at most $\exp(-0.3\ln^2(n))$, so that the general statement follows by a union bound over all choices of $x$ and $v$.
By symmetry we may assume that $v\in B_2(s_{\ell,1}^\ast)$.

In order to find the candidates described in Claim~\ref{clm:technical:contain.v}, we may proceed as for the discussion of Property~(C), and find candidate sets $C^x_v(s)$
which are defined as before, just with the difference that from box $B_2(s^\ast_{\ell,1})$ the only vertex we are allowed to use is $v$.
(That is, we replace $B_2(s^\ast_{\ell,1})$ with $\{v\}$.)
In order to bound the sizes of all sets $C^x_v(s)$ we can proceed analogously to the discussion of (C). As hereby most of the failure probabilities appearing in the different steps of the argument are the same as in steps of (C), we do not state these bounds any more, and assume that all likely events happen to hold
at the same time with high probability.

Now, In the analysis of (C), nothing changes until level $\ell$ is reached, since we do not come across the vertex $v$. In particular, it is likely to have
$$
|C^x_v(s)|=|C^x(s)|=n^{(3^{\ell+1}-3)\eps} \ln^{\pm 3^{3\ell}}(n)
$$
for every $s\in L_{\ell}$ as in (C). We next want to upper bound the size of $C^x_v(s_{\ell+1,1})$, i.e.~the candidates from $C^x(s_{\ell+1,1})$
which are adjacent with $v$.
For this we can proceed similarly to the proof of Claim~\ref{claim:nextlevel}.
Let $A_1:=\{v\}$, $A_2:=N(C^x_v(s_{\ell,2}))\cap B_t(s^\ast_{\ell,2})$ and
$A_3:=N(C^x_v(s_{\ell,3}))\cap B_t(s^\ast_{\ell,3})$.
Then $C^x_v(s_{\ell+1,1})$ consists of all vertices of $B_t(s_{\ell+1,1})$ that have a neighbour in each of the sets $A_i$ with 
$i\in [3]$.
We then have that $|A_1|=1$, while Claim~\ref{claim:neighbours} gives that is likely to have
$$|A_i|=\ln^{\pm 2}(n) p|C^x_v(s_{\ell,2})|\cdot |B_t(s^\ast_{\ell,2})|\ .$$ 
Now, condition on the above event. Given any $u\in B_t(s_{\ell+1,1})$ it holds with probability
at most $|A_i|p$ that $u$ has a neighbour in $A_i$ and hence,
$
\Prob(u\in C^x_v(s_{\ell+1,1}))\leq p^3|A_2||A_3|\, .
$
It follows that the random variable $|C^x_v(s_{\ell+1,1})|$
is stochastically dominated by
Bin$(|B(s_{\ell+1,1})|,p^3|A_2||A_3|)$ the expectation of which is
\begin{align*}
	|B_t(s_{\ell+1,1})|\cdot p^3|A_2||A_3|
	=
	\ln^{\pm 4}(n) p^5 |B_t(s_{\ell+1,1})| \prod_{i\in \{2,3\}} |C^x_v(s_{\ell,2})|\cdot |B_t(s^\ast_{\ell,2})| 
	& \leq n^{\eps} p^5 n \prod_{i\in \{2,3\}} \left(n^{(3^{\ell+1}-3)\eps} \cdot n\right) \\
	& = n^{-\frac{1}{3} + \eps\cdot 2\cdot 3^{\ell +1}}\, .
\end{align*}
Hence, applying Chernoff, it follows that with probability at least $1-\exp(-\ln^2(n))$,
$$
|C^x_v(s_{\ell+1,1})| \leq n^{\eps}\cdot \max\left\{1,n^{-\frac{1}{3} + \eps\cdot 2\cdot 3^{\ell +1}} \right\}\, .
$$
Now, we need to consider two cases: $\ell\neq k-1$ and $\ell=k-1$.
In the first case, we obtain
$$
|C^x_v(s_{\ell+1,1})| \leq n^{-2\eps} |C^x(s_{\ell+1,1})|\, .
$$
That is, because of the restriction to $v$, the candidate set
shrinks at least by a factor of $n^{-2\eps}$.
If we now replace 
$C^x(s_{\ell+1,1})$ with $C_v^x(s_{\ell+1,1})$ in the analysis of (C),
it turns out that with high probability the factor $n^{-2\eps}$ carries over to all candidate sets $|C^x_v(s_{j,1})|$ with $j\geq \ell+1$, up to maybe some polylogarithmic factors.
In particular, the relevant number of vertices in level $k$ which we want to estimate can be bounded from above by
$n^{(3^{k+1}-4)\eps}$ as claimed. If otherwise $\ell=k-1$ holds, then we get immediately that
$$
|C^x_v(s_{k,1})| = |C^x_v(s_{\ell+1,1})| \leq n^{\eps -\frac{1}{3} + \eps\cdot 2\cdot 3^{\ell +1}}
\leq n^{\frac{1}{9} + 4\eps}
$$
by our choice of $k$, which concludes the proof.  \hfill \qedhere

\section{Concluding remarks}\label{sec:concluding}

{\bf Making use of local resilience.} In our paper we prove Theorem \ref{thm:main.general}, which states that if $p \geq n^{-2/3 + \eps}$
then playing a $(2:2)$ game on $G\sim G_{n,p}$ Walker a.a.s.~has a strategy to claim a graph that satisfies a given $(p, \eps)$-resilient graph property $\mathcal P$. Hence, by applying other known results on local resilience in random graphs, we can immediately deduce further results on $(2:2)$ Walker-Breaker games on $G_{n,p}$. For instance, when $p \geq n^{-1/2 + \eps}$, then Walker a.a.s.~has strategies to obtain
a pancyclic spanning graph~\cite{krivelevich2010resilient} or the square of an almost spanning cycle~\cite{noever2016local}. Due to a recent result of Fischer et al.~\cite{Fischer2022} we even believe that in the mentioned range Walker can do the square of a Hamilton cycle.

\begin{prob}
	Prove the following: Let $\eps\in (0,1)$. Then, for $p\geq n^{-1/2+\eps}$, playing a $(2:2)$ Walker-Breaker game on the edges of a random graph $G\sim G_{n,p}$, Walker a.a.s.~has a strategy to occupy the square of a Hamilton cycle.
\end{prob}

Note that the strategy given in Section~\ref{sec:strategy} is made in such a way that from some moment on, Walker can play almost like Maker can play in a Maker-Breaker game. Whenever she wants to claim an arbitrary edge at some chosen vertex $v$, she is able to reach that vertex within a constant number of rounds and then claim her desired edge. Having such a strategy at hand, we are able to carry over the argument of Ferber at al.~\cite{F15Gen} to create a random subgraph $H\sim G_{n,q}$ from which Walker a.a.s.~claims all edges except at most an $\eps$-fraction of
edges at every vertex.
Now, in order to apply the result of~\cite{Fischer2022} this argument would not be sufficient. Since Fischer at al.~considered triangle resilience instead of local resilience, Walker would need to ensure to lose at most an $\eps$-fraction of triangles at every vertex. 

\medskip

{\bf Considering different biases.} When we consider the more general $(m:b)$ Connector-Breaker or Walker-Breaker game on $G_{n,p}$, then it turns out that, in contrast to the usual Maker-Breaker setting, the threshold probability for creating a spanning tree or Hamilton cycle highly depends on the given bias $(m:b)$. While most ideas from this paper can be generalized to these doubly biased games, new ideas for Breaker's side are required. This is already a work in progress.

\bigskip

\bibliographystyle{amsplain}
\bibliography{references}

\end{document}